\newtheorem{theorem}{Theorem}[section]
\newtheorem{definition}[theorem]{Definition}
\newtheorem{proposition}[theorem]{Proposition}
\newtheorem{lemma}[theorem]{Lemma}
\newtheorem{remark}[theorem]{Remark}
\newtheorem{corollary}[theorem]{Corollary}
\newcommand{\NN}{{\mathcal{N}}}
\newcommand{\E}{{\mathbb E}}
\newcommand{\N}{{\mathbb N}}
\newcommand{\Z}{{\mathbb Z}}
\renewcommand{\P}{{\mathbb P}}
\newcommand{\Q}{{\mathbb Q}}
\newcommand{\R}{{\mathbb R}}
\newcommand{\x}{{\mathbf x}}
\newcommand{\WW}{{\mathcal W}}
\title{Convergence of the dynamical discrete web to the dynamical Brownian web}
\author[1]{Krishnamurthi Ravishankar}
\author[2]{Kumarjit Saha}
\affil[1]{SUNY, College at New Paltz}
\affil[2]{Ashoka University}
\begin{document}
\maketitle

\begin{abstract}
In this paper we study the convergence of  dynamical discrete web (DyDW)  
to the dynamical Brownian web (DyBW) in the path space topology. 
We show that  almost surely the DyBW has RCLL paths taking values 
in an appropriate metric space and as a sequence of RCLL paths, 
the scaled dynamical discrete web  converges to the DyBW. This proves 
weak convergence of the DyDW process to the DyBW process. 
\end{abstract}

\section{Introduction and main result}

In this paper, we present a number of results concerning a dynamical version 
of the Brownian web (BW), known as the dynamical Brownian web (DyBW) and convergence to
it for a dynamical version of coalescing random walks model known as the 
dynamical discrete web (DyDW). The DyDW was introduced by Howitt and Warren \cite{HW09}, is a system
of coalescing simple symmetric one-dimensional random walks evolving over 
a dynamic time interval $[0,1]$. We first describe the DyDW model briefly.  

The discrete web (DW) is a collection of one-dimensional simple symmetric 
random walks starting from every point in the discrete space-time domain 
$\Z^2_{\text{even}} := \{(x,t) \in \Z^2 : x + t \text{ even}\}$.    
We consider an i.i.d. collection of random variables 
$\{I_{(x,t)} : (x,t) \in \Z^2_{\text{even}}\}$
such that 
\begin{align*}
I_{(0,0)} := 
\begin{cases}
+ 1 & \text{ w.p. } 1/2\\
- 1 & \text{ w.p. } 1/2,
\end{cases}
\end{align*}   
where $I_{(x,t)}$ gives the random walk increment at location $x$ at time $t$.
Following the walk, the walker at $(x,t)$ goes to $(x + I_{(x,t)}, t+1)\in \Z^2_{\text{even}}$ along the edge $\langle (x,t),(x + I_{(x,t)}, t+1)\rangle$. Thus, following 
the edges in DW we get a (random) collection of paths.    
The motivation for calling this system as the DW comes from the fact that 
under diffusive scaling as a collection 
of paths, the DW converges to the BW (Theorem 6.1 of \cite{FINR04}).  
For the dynamical version, we start with an ordinary DW at dynamic time $0$
and at each lattice point, direction of the outgoing edge switches at a fixed rate 
independently of all other lattice points. This gives DyDW as a process of collection of 
paths evolving dynamically over the time interval $[0,1]$.     

Howitt and Warren \cite{HW09} rightly guessed that if we slow down the rate of switching
(of outgoings edges) by $1/\sqrt{n}$, then under diffusive scaling,
there should be a non-trivial scaling limit the dynamical Brownian web
process $\{\WW(s) : s \in [0,1]\}$.
Assuming existence of such a process, it's finite dimensional 
distributions were analysed in \cite{HW09} and it was shown that 
for $s_1, s_2 \in [0,1]$, the distribution of $(\WW(s_1), \WW(s_2))$ is given by 
{\it sticky pair} of Brownian webs with degree of stickiness is given by $1/|2(s_2 - s_1)|$
(for a definition of sticky pair of Brownian webs see Section \ref{subsec:Sticky_BW}). Existence of a consistent family of finite dimensional distribution for $\WW(\tau)$ follows from this and stationarity and Markov property proved in Theorem 6.2 of \cite{NRS10} .   In \cite{NRS10}, Newman et al. proved that such a process  uniquely exists 
  and provided a rigorous construction of the DyBW as well. 
In another work \cite{NRS09A}, a sketch of the proof of convergence of 
finite dimensional distributions of
  DyDW  to that of DyBW was given.  
But to the best of our knowledge, weak convergence of the DyDW process 
 to the DyBW process has not been shown so far. 
Our goal in this paper is to provide a stronger topological setting for studying 
this convergence and give a proof for convergence in that setting,
namely as a process with RCLL paths. We state our result in detail in
Theorem \ref{thm:DYDW_Conv}. Towards this, we established that 
the DyBW process has RCLL paths  a.s. (taking values in an appropriate metric space). 
We prove this in Theorem \ref{thm:DYBW_RCLL}.       

The paper is organised as follows. In Section \ref{sec:DyBW_RCLL}
we prove that the DyBW has RCLL paths taking values in an appropriate metric space a.s.
Details of the relevant metric space have also been described.  
 In Section \ref{sec:FDD_DyDW} we describe 
the DyDW model and prove that it's finite dimensional distributions converges to that of 
DyBW.  The main argument for the same was already developed in \cite{NRS09A}. 
Finally, in Section \ref{sec:Tight} we prove that as a sequence of RCLL paths, 
the sequence of scaled dynamic discrete webs is tight and hence, we have process level convergence.

\section{DyBW is a.s. RCLL path process}
\label{sec:DyBW_RCLL}

In this section we show that the DyBW process has RCLL paths a.s. (Theorem \ref{thm:DYBW_RCLL}). 
The standard BW originated in
the work of Arratia ( see \cite{A79} and \cite{A81})
as the scaling limit of the voter model on $\Z$. Later T\'{o}th and Werner \cite{TW98} 
gave a construction of a system of coalescing Brownian motions starting from every point in space-time plane $\R^2$ and used it to construct the true self-repelling motion.
Intuitively, the BW can be thought of as a collection
of one-dimensional coalescing Brownian motions starting from every point in the space time
plane $\R^2$. Later  Fontes \textit{et. al.} \cite{FINR04} provided a framework in
which the Brownian web is realized as a random variable taking values in a Polish space, 
which enabled them to prove the weak convergence to the Brownian web  of coalescing random walks starting from every point of the space-time lattice in the diffusive scaling limit.
In the following section we recall relevant details from \cite{FINR04} to describe the 
Polish space of our interest.

\subsection{Preliminary details about Polish space of collection of paths}
\label{subsec:Preliminary}

Let $\R^{2}_c$ denote the completion of the space time plane $\R^2$ with
respect to the metric
\begin{equation*}
\rho((x_1,t_1),(x_2,t_2))=|\tanh(t_1)-\tanh(t_2)|\vee \Bigl| \frac{\tanh(x_1)}{1+|t_1|}
-\frac{\tanh(x_2)}{1+|t_2|} \Bigr|.
\end{equation*}
As a topological space $\R^{2}_c$ can be identified with the
continuous image of $[-\infty,\infty]^2$ under a map that identifies the line
$[-\infty,\infty]\times\{\infty\}$ with the point $(\ast,\infty)$, and the line
$[-\infty,\infty]\times\{-\infty\}$ with the point $(\ast,-\infty)$.
A path $\pi$ in $\R^{2}_c$ with starting time $\sigma_{\pi}\in [-\infty,\infty]$
is a mapping $\pi :[\sigma_{\pi},\infty]\rightarrow [-\infty,\infty]$ such that
$\pi(\infty)= \pi(-\infty)= \ast$ and $t\rightarrow (\pi(t),t)$ is a continuous
map from $[\sigma_{\pi},\infty]$ to $(\R^{2}_c,\rho)$.
We then define $\Pi$ to be the space of all paths in $\R^{2}_c$ with all possible starting times in $[-\infty,\infty]$.
The following metric, for $\pi_1,\pi_2\in \Pi$
\begin{equation*}
d_{\Pi} (\pi_1,\pi_2)= |\tanh(\sigma_{\pi_1})-\tanh(\sigma_{\pi_2})|\vee\sup_{t\geq
\sigma_{\pi_1}\wedge
\sigma_{\pi_2}} \Bigl|\frac{\tanh(\pi_1(t\vee\sigma_{\pi_1}))}{1+|t|}-\frac{
\tanh(\pi_2(t\vee\sigma_{\pi_2}))}{1+|t|}\Bigr|
\end{equation*}
makes $\Pi$ a complete, separable metric space. Convergence in this
metric can be described as locally uniform convergence of paths as
well as convergence of starting times.
Let ${\cal H}$ be the space of compact subsets of $(\Pi,d_{\Pi})$ equipped with
the Hausdorff metric $d_{{\cal H}}$ given by,
\begin{equation*}
d_{{\cal H}}(K_1,K_2)= \sup_{\pi_1 \in K_1} \inf_{\pi_2 \in
K_2}d_{ \Pi} (\pi_1,\pi_2)\vee
\sup_{\pi_2 \in K_2} \inf_{\pi_1 \in K_1} d_{\Pi} (\pi_1,\pi_2).
\end{equation*}
The space $({\cal H},d_{{\cal H}})$ is a complete separable metric space. Let
$B_{{\cal H}}$ be the Borel  $\sigma-$algebra on the metric space $({\cal H},d_{{\cal H}})$.
The Brownian web ${\cal W}$ is an $({\cal H},B_{{\cal H}})$ valued random
variable. Below we state Theorem 2.1 of \cite{FINR04} characterizing the Brownian web 
as a ${\cal H}$-valued random variable. 
\begin{theorem}[Theorem 2.1 of \cite{FINR04}]
\label{theorem:Bwebcharacterisation}
There exists an $({\mathcal H}, {\mathcal B}_{{\mathcal H}})$-valued random variable
${\mathcal W}$ whose distribution is uniquely determined by
the following properties:
\begin{itemize}
\item[$(a)$] from any deterministic point $\x\in\R^2$, there is  almost surely a unique path $\pi^{\x}\in {\mathcal W}$  starting from $\x$;
\item[$(b)$] for a finite set of deterministic points $\x^1,\dotsc, \x^k \in \R^2$, the collection $(\pi^{\x^1},\dotsc,\pi^{\x^k})$ is distributed as coalescing Brownian motions starting from $\x^1,\dotsc,\x^k$;
\item[$(c)$] for any countable deterministic dense set ${\mathcal D}$ of $\R^2$, ${\mathcal W}$ is the closure of $\{\pi^{\x}: \x\in {\mathcal D} \}$ in $(\Pi, d_{\Pi})$  almost surely.
\end{itemize}
\end{theorem}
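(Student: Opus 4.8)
The plan is to establish existence and uniqueness separately, uniqueness being the easier half. For uniqueness, suppose $\mathcal{W}$ and $\mathcal{W}'$ both satisfy $(a)$--$(c)$ and fix a common countable dense set $\mathcal{D} = \{\x_1, \x_2, \dots\} \subset \R^2$. By $(a)$ each almost surely contains a unique path $\pi^{\x_i}$ from $\x_i$, and by $(b)$ the joint law of every finite subfamily $(\pi^{\x_1}, \dots, \pi^{\x_k})$ is that of coalescing Brownian motions, hence the same under both. This determines the law of the entire countable family $(\pi^{\x_i})_{i \ge 1}$ as a $\Pi^{\N}$-valued random element. Since the map sending a sequence of paths to the closure of its range in $(\Pi, d_\Pi)$ is measurable, and $(c)$ identifies $\mathcal{W}$ (resp. $\mathcal{W}'$) with this closure almost surely, the two laws on $(\mathcal{H}, \mathcal{B}_{\mathcal{H}})$ coincide.

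For existence, I would fix a countable dense $\mathcal{D}$ and build a consistent family of coalescing Brownian motions $\{\pi^{\x} : \x \in \mathcal{D}\}$: enumerate $\mathcal{D}$, let $\pi^{\x_1}$ be a standard Brownian path from $\x_1$, and inductively run $\pi^{\x_n}$ as an independent Brownian motion from $\x_n$ until it first meets $\bigcup_{i<n}\pi^{\x_i}$, thereafter coalescing with the path it meets. Every finite subfamily is then distributed as coalescing Brownian motions, giving $(b)$. Set $\mathcal{W} := \overline{\{\pi^{\x} : \x \in \mathcal{D}\}}$, the closure in $(\Pi, d_\Pi)$. The two substantive points are that this closure is almost surely a compact subset of $\Pi$ (so that $\mathcal{W}$ is genuinely $(\mathcal{H},\mathcal{B}_{\mathcal{H}})$-valued) and that $(a)$ holds at every deterministic $\x$, in particular for $\x \notin \mathcal{D}$.

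The main obstacle is the compactness of this closure. The space $(\Pi, d_\Pi)$ admits an Arzel\`a--Ascoli type criterion: a closed set is compact iff its paths have uniformly bounded starting times and satisfy a uniform modulus-of-continuity bound in the rescaled coordinates underlying $\rho$. Since the family is countably infinite, the real work is to control the maximal oscillation over all paths simultaneously on each compact space-time region, and here coalescence is indispensable. The number of distinct paths passing through a bounded box at a positive time is almost surely finite and can be bounded quantitatively, using that the density of coalescing Brownian motions started from a dense configuration is of order $(\pi t)^{-1/2}$ at time $t$. Controlling this count reduces the equicontinuity of the whole family to that of finitely many Brownian paths per box, and a Borel--Cantelli argument over a mesh of boxes and dyadic space-time scales --- invoking the Gaussian tail and the L\'evy modulus of continuity of Brownian motion --- yields the required uniform modulus almost surely. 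Extra care is needed near the compactified boundary $(\ast, \pm\infty)$ of $\R^2_c$; this is handled by the $1/(1+|t|)$ damping in $\rho$ together with Gaussian bounds on $\sup_{s \le t}|\pi(s)|$.

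Finally, I would verify $(a)$ and the independence of $\mathcal{D}$. For deterministic $\x$, approximate it by $\x_n \in \mathcal{D}$ with $\x_n \to \x$; using coalescence and the fact that a fixed deterministic point is almost surely not a branch point of the web, the paths $\pi^{\x_n}$ form a Cauchy sequence in $d_\Pi$ whose limit $\pi^{\x} \in \mathcal{W}$ starts from $\x$, while any two candidate paths from $\x$ coalesce immediately and hence agree, giving almost sure uniqueness. The same approximation shows that the closure built from any second dense set $\mathcal{D}'$ both contains and is contained in $\mathcal{W}$ almost surely, so the law of $\mathcal{W}$ is independent of $\mathcal{D}$; together with the uniqueness argument this identifies the constructed object as the Brownian web characterized by $(a)$--$(c)$.
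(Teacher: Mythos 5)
This theorem is quoted from \cite{FINR04} as background material; the paper contains no proof of it, so the only meaningful comparison is with the original argument of Fontes, Isopi, Newman and Ravishankar. Your proposal follows essentially that same route --- uniqueness because property $(b)$ determines the law of the countable coalescing family started from a dense set $\mathcal{D}$ and property $(c)$ realizes $\mathcal{W}$ as a measurable function (the closure) of that family; existence by inductively constructing coalescing Brownian motions from $\mathcal{D}$, proving almost sure compactness of the closure via the $(\pi t)^{-1/2}$ density estimate for coalescing paths combined with modulus-of-continuity and Borel--Cantelli arguments, and then verifying $(a)$ and the independence of the construction from the choice of $\mathcal{D}$ by approximation --- which is precisely the decomposition and the key counting estimate used in \cite{FINR04}, with the steps you leave schematic (the quantitative bound on the number of distinct paths crossing a box, and the immediate-coalescence argument giving uniqueness of the path from a deterministic point) being exactly the technical core worked out there.
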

The DyBW $\{\WW(\tau) : \tau \in [0,1]\}$
is defined as a ${\cal H}$-valued process evolving over dynamic time domain $[0,1]$
with finite dimensional distributions as mentioned in Section \ref{subsec:Sticky_BW}.
Newman et. al. \cite{NRS10} provided a rigorous construction and showed that such a process 
indeed exists. Our next result shows that the DyBW process has ${\cal H}$-valued RCLL paths
a.s.
\begin{theorem}
\label{thm:DYBW_RCLL}
The DyBW process $\{ \WW(\tau) : \tau \in [0,1]\}$ has RCLL paths a.s.
\end{theorem}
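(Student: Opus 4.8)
The plan is to deduce RCLL regularity from a Chentsov-type \emph{three-point} moment estimate on the Hausdorff increments of the process, rather than from a two-point (Kolmogorov) bound. Individual paths started from deterministic points will turn out to vary continuously in the dynamic time $\tau$, but the web as an element of $(\mathcal H, d_{\mathcal H})$ is expected to have genuine (rare) jumps coming from reconnections of the coalescing structure, so a continuity bound cannot hold and the product form is essential. Concretely, I would aim to show that there are constants $p>0$, $C<\infty$ and $\gamma>0$ such that for all $0\le s\le r\le t\le 1$,
\begin{equation*}
\E\bigl[d_{\mathcal H}(\WW(s),\WW(r))^{p}\,d_{\mathcal H}(\WW(r),\WW(t))^{p}\bigr]\le C\,|t-s|^{1+\gamma}.
\end{equation*}
By the metric-space version of the Chentsov--Billingsley criterion this guarantees that the restriction of $\WW$ to a countable dense set of dynamic times a.s.\ extends to an RCLL $(\mathcal H,d_{\mathcal H})$-valued function. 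Since $\tau\mapsto\WW(\tau)$ is continuous in probability (again a consequence of the sticky-pair description as $|t-s|\to0$), this extension has the finite-dimensional distributions of the DyBW together with the Markov and stationarity properties of \cite{NRS10}, and therefore realizes it as an RCLL process, which proves the theorem.

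The first ingredient is to reduce the Hausdorff distance between two webs to the motion of finitely many paths. I would fix a countable deterministic dense set $\mathcal D\subset\R^2$ and, exploiting that $\R^{2}_c$ is compact together with the compactness/coalescence estimates of \cite{FINR04} (paths started from nearby points of a single web coalesce quickly), bound $d_{\mathcal H}(\WW(s),\WW(t))$ by $\max_{\x\in G}d_{\Pi}(\pi^{\x}(s),\pi^{\x}(t))$ over a finite grid $G\subset\mathcal D$, up to an additive error controlled by the mesh of $G$ and a space-time truncation. Property $(c)$ of Theorem~\ref{theorem:Bwebcharacterisation} (the web is the closure of the paths from $\mathcal D$) is what makes such a finite reduction legitimate at each fixed $\tau$; the genuine work is to make it hold simultaneously for all $\tau$ in a small window, i.e.\ to show the coalescence pattern of the grid paths is stable under the dynamic-time evolution.

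For a single starting point $\x$, the pair $(\pi^{\x}(s),\pi^{\x}(t))$ is, by the sticky-pair description, a pair of sticky/coalescing Brownian motions started at $\x$ with stickiness $1/|2(t-s)|$. As $|t-s|\to0$ the stickiness diverges, so the difference process stays pinned near $0$ for longer, and the weighting $1/(1+|u|)$ in $d_{\Pi}$ further suppresses the large-time spread; this lets me bound the moments $\E[d_{\Pi}(\pi^{\x}(s),\pi^{\x}(t))^{p}]$ by an explicit increasing function of $|t-s|$ vanishing at $0$, uniformly in $\x$. Assembling these single-path bounds with the finite reduction above produces a two-time estimate on $\E[d_{\mathcal H}(\WW(s),\WW(t))^{p}]$ that is linear in $|t-s|$ (the jump contribution), plus a strictly superlinear continuous contribution.

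The three-point product is then obtained from the Markov property and stationarity: conditioning on $\WW(r)$, the families $\{\WW(u):u\le r\}$ and $\{\WW(u):u\ge r\}$ are conditionally independent, so the left- and right-increments factor, and each factor contributes a linear-in-gap bound, giving the product order $|t-s|^{2}$ that Chentsov requires. I expect the main obstacle to be exactly the passage from the single-path sticky estimates to the Hausdorff metric: one must control all the uncountably many paths of the web simultaneously and uniformly as $\tau$ varies, which forces a quantitative, dynamic-time-stable version of the coalescence estimates (so that a fixed finite grid remains representative throughout a small $\tau$-window), and one must extract the conditional factorized bounds from a description that a priori only specifies two-time sticky marginals---most likely by appealing to the explicit construction of the DyBW in \cite{NRS10} to supply the joint three-time control and the Markov property.
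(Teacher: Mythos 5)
Your route (a metric-space Chentsov three-point moment criterion) is genuinely different from the paper's, and your starting diagnosis is correct: the process has real jumps, so a two-point Kolmogorov bound can at best be linear in $|t-s|$ and only the product form could work. However, there are two concrete gaps, and they sit exactly at the two steps you defer. First, the factorization step. Conditional independence of past and future given $\WW(r)$ (the Markov property from \cite{NRS10}) gives $\E\bigl[L^{p}R^{p}\bigr]=\E\bigl[g_L(\WW(r))\,g_R(\WW(r))\bigr]$ with $g_R(K):=\E\bigl[d_{{\cal H}}(\WW(r),\WW(t))^{p}\mid \WW(r)=K\bigr]$, but the bound you need, $g_R(K)\le C\,|t-r|$ with a \emph{deterministic} $C$, is not available: conditionally on $\WW(r)$, the rate at which macroscopic switchings occur in $(r,t]$ is a random quantity, essentially the local-time mass of those marked $(1,2)$ points of $\WW(r)$ whose switching produces an excursion of diameter larger than a given $\eta$ (equivalently, the intensity of large separation points of the corresponding net). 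So one only gets $g_R(\WW(r))\le |t-r|\,\Theta(\WW(r))$ with $\Theta$ random, and the Chentsov product bound then requires something like $\E[\Theta^{2}]<\infty$. The cited literature supplies only almost sure local finiteness of the $(T_1,T_2)$-separation points (Proposition 7.9 of \cite{NRS10}, used in Lemma \ref{lem:SeparPtContains}); no second-moment bound on their number or intensity exists there, and producing one is a substantive new estimate, not a routine appeal to the construction. Second, the grid reduction: in $d_{{\cal H}}(\WW(s),\WW(t))\le\max_{\x\in G}d_\Pi(\pi^{\x}(s),\pi^{\x}(t))+\mathrm{Err}_s(G)+\mathrm{Err}_t(G)$ the error terms shrink with the mesh of $G$, not with $|t-s|$, so $G$ must be refined as $|t-s|\to0$; the union bound then degrades the single-path sticky estimate to $(\#G)\cdot|t-s|$, and this must be balanced against moment bounds for $\mathrm{Err}(G)$ decaying polynomially in $\#G$ --- i.e., quantitative coalescence tail estimates for the Brownian web uniform over the grid, which go well beyond property $(c)$ of Theorem \ref{theorem:Bwebcharacterisation}. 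Neither gap is obviously unfillable, but together they are the actual content of a proof along your lines, and you have not supplied them.

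It is instructive to compare with the paper, whose argument is designed to avoid precisely these quantitative inputs. Via the marking construction, any $\epsilon$-oscillation of $\tau\mapsto\WW(\tau)$ anywhere in $[0,1]$ forces a Poisson clock ring at a separation point of the corresponding Brownian net whose excursion diameter exceeds $\epsilon$ and whose age is bounded below (Lemma \ref{lem:PoissonClockSetIntersection}); the set of such points lies in a compact box (Lemma \ref{lem:CmptBox}) and inside finitely many locally finite sets $S^{T_2}_{T_1}$ (Lemma \ref{lem:SeparPtContains}), hence is a.s.\ finite, so the associated set $\Lambda^{\epsilon}$ of ring times is a.s.\ finite; partitioning $[0,1]$ around these finitely many times yields $\omega_{\WW}(\delta)<\epsilon$ for all small $\delta$ (Proposition \ref{prop:DyBWModCont}). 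Only qualitative, almost sure statements enter --- no moments at all. Note also that your argument, if completed, would produce an RCLL \emph{version} of the DyBW, whereas the paper shows the process as constructed in \cite{NRS10} is itself a.s.\ RCLL; for the application to Theorem \ref{thm:DYDW_Conv} a version would suffice, but the distinction should be made explicit.
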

In what follows, we denote the Polish space of ${\cal H}$-valued RCLL paths defined over 
$[0,1]$ with Skorohod metric as ${\cal D}^{{\cal H}}([0,1])$. In other words, 
Theorem \ref{thm:DYBW_RCLL} gives us that the DyBW is a 
${\cal D}^{{\cal H}}([0,1])$ valued random variable.

While providing a a rigorous construction of 
the DyBW $\{\WW(\tau) : \tau \in [0,1]\}$, Newman et. al. developed
 an alternate construction of the Brownian net $\NN$ as well. Their work presents a  
 construction of the DyBW and the Brownian net both constructed on a {\it common} probability space. 
 In this work we will extensively use this correspondence 
 and refer this as `the {\it corresponding} net $\NN$' of the 
 DyBW $\{\WW(\tau) : \tau \in [0,1]\}$ and vice versa.
In the next section we describe their common construction and the corresponding net briefly. 
For details we refer the reader to Theorem 5.5 and Proposition 6.1 of \cite{NRS10}.


\subsection{The `corresponding' Brownian net}
\label{subsec:Corr_Net}

The approach of Newman et. al. \cite{NRS10} is based on the 
construction of a certain Poissonian marking 
which is governed by local time of the forward web along
the backward web, i.e., construction of a three-dimensional Poisson point 
process with intensity measure $ L \times \ell$,
where $\ell$ is Lebesgue measure and $L$ is the local time measure of the
forward web along the backward web. 
For a deterministic point $(x,t) \in \R^2$ almost surely 
there is exactly one outgoing path starting at $(x,t)$ and no incoming path passing 
through $(x,t)$ in the Brownian web. 

But there are random $(1, 2)$  points in the BW where a single Brownian
web path enters the point from an earlier time and two paths leave from that. 
Among the two outgoing paths, exactly one path is 
the continuation of the (incoming) path coming from earlier time 
and the other one is ``newly-born" (see Figure \ref{fig:OneTwo}).

\begin{figure}[htb!]
\label{fig:OneTwo}
\begin{center}
\includegraphics[scale=.9]{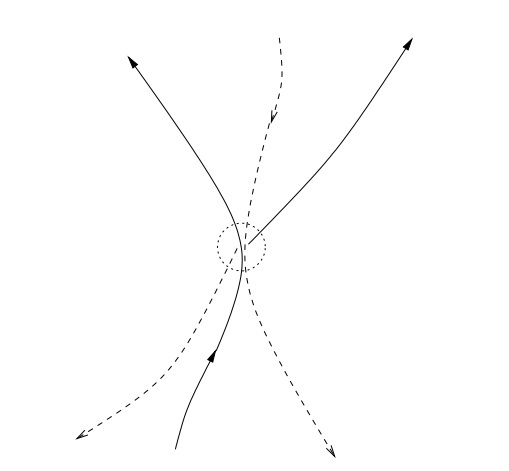}
\caption{ A schematic diagram of a $(1,2)_l$ point as the incoming
forward path connects to the leftmost outgoing path. The other outgoing path represents
the ``newly born path". The dotted lines represent similar 
behaviour for the backward paths. It shows that at a $(1,2)$ point,  a backward path
meets a forward path.}
\end{center}
\end{figure}

Further, $(1, 2)$ points of BW are precisely those at which a forward and
a backward path meet and the set of `marked
points' will be supported on the set of $(1,2)$ points.
Each $(1, 2)$ point has a preferred left or right ``direction" and 
accordingly it is denoted as $(1, 2)_l$ or $(1, 2)_r$.
For a $(1, 2)_r$ ($(1, 2)_l$) point, the continuing path 
(coming in from earlier time) is to the right (left) of the ``newly-born" path. 
 One of the major observation of \cite{NRS10} is that at the continuum level, 
the analogue of DyDW switching will simply be change of directions at all marked (1, 2) points. 
In other words, the DyBW  at dynamic time $s_0$, denoted by $\WW(s_0)$, will
be deduced from the initial BW $\WW(0)$ by switching directions
 of all marked $(1, 2)$ points, marked during dynamic time interval $[0, s_0]$. 
 We should mention that the situation is  
much more complicated here since, $L$ is not 
locally finite as the set of $(1,2)$ points is dense in $\R^2$. 
However, as showed in \cite{NRS10}, one can approximate $L$ by a sequence of locally finite measures $L_n$
and do the markings using $L_n$, and then let $n \to \infty$. 

Further, it was shown that the same marked point process gives an alternate 
construction of the Brownian net, viz., for a marked $(1, 2)$ point,
which was originally an $(1,2)_l$ point say, the Brownian net includes not only 
paths that connect to the left outgoing path (as in the original web) but also ones
that connect to the right outgoing path. Formally, 
 for deterministic $(x,t) \in \R^2$ consider the a.s. unique $\pi^{(x,t)}$ starting 
 from $(x,t)$ in the initial BW $\WW(0)$. For each $(x,t) \in \R^2$
 consider the set of all the paths obtained by continuing along both the  
 right and left outgoing paths at each `marked' $(1,2)$ point on the 
 trajectory  of $\pi^{(x,t)}$ and this collection gives the standard Brownian net (Theorem 5.5
 of \cite{NRS10}). The above construction using Poissonian marking of (1,2) points allows
 us to have the standard DyBW and the standard Brownian net defined on the same probability space.
 In this paper, we refer this as the ``corresponding" Brownian net $\NN$ corresponding to the 
 DyBW $\{\WW(s) : s \in [0,1]\}$ and we will use this correspondence extensively.  
 We note that the Brownian net uniquely defines the dual net. This allows us to 
define the  ``the corresponding" double Brownian net vector $(\NN, \widehat{\NN})$ where   
$\widehat{\NN}$ is the dual for $\NN$.   

We note here that for any interval (open or closed) $A \subseteq [0 , 1]$
we use the same construction to construct the `corresponding' net $\NN_{A}$ 
corresponding to the DyBW process $\{\WW(\tau) : \tau \in A \}$
considering `markings' only in the dynamic time interval $A$. 
For any deterministic $0 \leq \tau_1 < \tau_2 \leq 1$ we have 
$$
\NN_{[\tau_1, \tau_2]} = \NN_{[\tau_1, \tau_2)} = \NN_{(\tau_1, \tau_2)} = 
 \NN_{(\tau_1, \tau_2]}\text{ a.s. }
$$
Before ending this section we need to define some important quantities 
associated with the corresponding Brownian net $\NN$.   

For any path $\pi \in \Pi$ and for $t \geq \sigma_\pi$, 
its restriction over the time domain $[t, \infty)$ is denoted as $\pi_{[t, \infty)}$.   
 For $\pi_1, \pi_2 \in \Pi$ and 
for any interval $A \subset [\sigma_{\pi_1}\vee \sigma_{\pi_2}, \infty)$,
the notation $d_{\Pi} (\pi_1,\pi_2)_{A}$ is defined as 
\begin{equation*}
d_{\Pi} (\pi_1,\pi_2)_{A}
= \sup_{s \in A} \Bigl|\frac{\tanh(\pi_1(s))}{1+|s|}-\frac{
\tanh(\pi_2(s))}{1+|s|}\Bigr|.
\end{equation*}
In other words, the quantity $d_{\Pi} (\pi_1,\pi_2)_{A}$ does not consider  
distance between starting points and represents  
distance between restricted trajectories of these two paths  w.r.t. 
spatial part of $d_\Pi$ metric restricted over the time interval $A \subset [\sigma_{\pi_1}\vee \sigma_{\pi_2}, \infty)$.
For a general collection of paths we describe the notion of 
separation points as follows.

\begin{definition}
\label{def:SeparationPt}
For a path family ${\cal K} \subset \Pi$, a point $(x,t) \in \R^2$
is said to be a `separation point' if there exist $\pi_1, \pi_2 \in {\cal K} $
such that the following conditions are satisfied :
\begin{itemize}
\item[(i)] $\sigma_{\pi_1} \vee \sigma_{\pi_2} < t$ and $\pi_1(t) = \pi_2(t) = x$, i.e., 
both the paths start strictly before time $t$ and pass through the point $(x,t)$;
\item[(ii)] The first meeting time of $\pi_1, \pi_2$ after $t$ defined as 
$$
t^c_{\pi_1, \pi_2} := \inf\{ s > t : \pi_1(s) = \pi_2(s) \},
$$
is strictly bigger than $t$.
\end{itemize}
\end{definition}
Let ${\cal S} = {\cal S}({\cal K})$ denote the set of separation 
points in ${\cal K}$. Definition of a separating point suggests that for each 
separating point $ (x,t) \in {\cal S}({\cal K})$ there exist paths $\pi_1, \pi_2 \in {\cal K}$
which get separated at $(x,t)$ and create an excursion set, which 
is a subset of $\R^2$, in between them over the time interval 
$(t, t^c_{\pi_1, \pi_2})$. The (random) set of separation points in the corresponding 
Brownian net is denoted as ${\cal S}(\NN)$. We mention here that 
our notion of separation points for $\NN$ is slightly different from the notion of $\NN$
separation points as introduced in \cite{NRS10} which we mention below:
\begin{definition}[Definition 7.2 \cite{NRS10}]
\label{def:NRS_separation}
For $ - \infty <T_1 < T_2 < +\infty$ a (random) point $(x, t)$ 
with $T_1 < t < T_2$ is said to be a $(T_1,T_2)$
 separation point iff there are two paths $\pi_1$ and $\pi_2$ in the 
 Brownian net $\NN$ starting from $\R \times \{T_1\}$ and separating at $(x, t)$
  which do not touch (intersect) on $(t, T_2]$.
\end{definition}
Let $S^{T_2}_{T_1}$ denote the set of all $(T_1, T_2)$ separating points 
in the corresponding Brownian net $\NN$. We observe that $S^{T_2}_{T_1} \subseteq {\cal S}(\NN)$ a.s.
for all $T_1 < T_2 $.

Clearly, for Brownian net a separation point must necessarily 
be a marked $(1,2)$ point. There can be several excursion sets created at a separation point 
$(x,t)$.  We are going to define 
the maximal excursion set and several other quantities related to maximal 
excursion set generated at a separation point, i.e., at a marked $(1,2)$ type 
point here: 
\begin{itemize}
\item[(1)] {\bf Maximal excursion set}  generated at a separating point
 $(x,t) \in {\cal S}({\cal K})$  is denoted by $E^{(x,t)}$ and defined as 
\begin{align*}
E^{(x,t)} := \{ & (y,s) : s \in [t, t^c_{\pi_1, \pi_2}] \text{ and } y \in [\pi_1(s), \pi_2(s)]
\text{ for some }\pi_1, \pi_2 \in {\cal K} \\
& \text{ with } \pi_1(t) = \pi_2(t) = x  \text{ and } t^c_{\pi_1, \pi_2} > t \}.
\end{align*}
\item[(2)] {\bf Diameter/width} at a separating point $(x,t) \in {\cal S}({\cal K})$ 
(of the maximal excursion set at $(x,t)$) is denoted by $D^{(x,t)}$ and defined as 
\begin{align*}
D^{(x,t)} := \sup\{ & d_\Pi(\pi_1, \pi_2)_{[t, t^c_{\pi_1, \pi_2}]} : 
\pi_1, \pi_2 \in {\cal K}, \sigma_{\pi_1} \vee \sigma_{\pi_2} < t, \\
& \text{ and }\pi_1(t) = \pi_2(t) = x \text{ with } t^c_{\pi_1, \pi_2} > t \}.
\end{align*}

\item[(3)] {\bf Survival time} of a separating point $(x,t) \in {\cal S}({\cal K})$ 
(of the maximal excursion set at $(x,t)$) is denoted by $T^{(x,t)}$ and defined as 
\begin{align*}
T^{(x,t)} := \sup\{ & \tanh(t^c_{\pi_1,\pi_2}) - \tanh(t) : 
\pi_1, \pi_2 \in {\cal K}, \sigma_{\pi_1} \vee \sigma_{\pi_2} < t, \\
& \text{ and }\pi_1(t) = \pi_2(t) = x \text{ with } t^c_{\pi_1, \pi_2} > t \}.
\end{align*}

\item[(4)] {\bf Age} of a separating point $(x,t) \in {\cal S}({\cal K})$ 
(of the maximal excursion set at $(x,t)$) is denoted by $A^{(x,t)}$ and defined as 
\begin{align*}
A^{(x,t)} := \sup\{ & \tanh(t) - \tanh(\sigma_\pi) : 
\pi  \in {\cal K}, \sigma_{\pi}  < t \text{ and }\pi(t) = x  \}.
\end{align*}
\end{itemize}

For $\epsilon > 0$, the set of separating points  in ${\cal K}$ with diameter 
more than $\epsilon$ is denoted as $D^\epsilon$ and defined as  
$$
D^\epsilon = D^\epsilon({\cal K}) := \{(x,t) \in {\cal S}({\cal K}) : D^{(x,t)} > \epsilon \}.
$$
Similarly, the sets
$A^\epsilon = A^\epsilon({\cal K}) := \{(x,t) \in {\cal S}({\cal K}) : A^{(x,t)} > \epsilon \}$ 
and $T^\epsilon = T^\epsilon({\cal K}) := \{ (x,t) \in {\cal S}({\cal K}) : T^{(x,t)} > \epsilon \}$
are defined.

 For ease of notation, for an interval $A \subseteq [0,1]$ for ease of notation, 
 let ${\cal S}_{A}$ denote the set of separation points  ${\cal S}(\NN_A)$.
Next, for a separating point $(x,t) \in {\cal S}_{A}$ based on the path family $\NN_A$, we define 
the diameter, age and survival time quantities denoted by $D^{(x,t)}_{A}, 
A^{(x,t)}_{A}, T^{(x,t)}_{A}$ respectively. Based on the above quantities, 
for $\epsilon > 0$ we define the following random set of separation points:
\begin{align*}
D^\epsilon_A & := D^\epsilon(\NN_A) \\
T^\epsilon_A & := T^\epsilon(\NN_A)\text{ and }\\
A^\epsilon_A & := A^\epsilon(\NN_A) .
\end{align*}
For deterministic $0 \leq \tau_1 < \tau_2 \leq 1$ we have 
\begin{align*}
(D^\epsilon_{[\tau_1, \tau_2]}, T^\epsilon_{[\tau_1, \tau_2]}, A^\epsilon_{[\tau_1, \tau_2]}) 
= 
(D^\epsilon_{[\tau_1, \tau_2)}, T^\epsilon_{[\tau_1, \tau_2)}, A^\epsilon_{[\tau_1, \tau_2)})
\text{ a.s.}
\end{align*}
Fix $m \in \N$. For $0 \leq j \leq m - 1$ for ease of notation we set
\begin{align*}
D^\epsilon & := D^\epsilon_{[0, 1]} \text{ and }  D^\epsilon_{j} := D^\epsilon_{[j/m, (j+1)/m]}
= D^\epsilon_{[j/m, (j+1)/m)} \\
T^\epsilon & := T^\epsilon_{[0,1]}\text{ and } T^\epsilon_{j} := T^\epsilon_{[j/m, (j+1)/m]} = 
T^\epsilon_{[j/m, (j+1)/m)}\\
A^\epsilon & := A^\epsilon_{[0, 1]} \text{ and } A^\epsilon_{j} := A^\epsilon_{[j/m, (j+1)/m]} = 
A^\epsilon_{[j/m, (j+1)/m]}.
\end{align*}

In the next section we use the correspondence 
between the DyBW and the Brownian net and prove Theorem \ref{thm:DYBW_RCLL}.

\subsection{Proof of Theorem \ref{thm:DYBW_RCLL}}
We first start with some basics on RCLL paths. Let  $(X, d_X)$ denote a general metric space. For a $(X, d_X)$ valued 
function $f$ defined over $[0,1]$ and for any subset  $ A \subset [0,1]$ we define
\begin{align*}
\omega'_f(A) := \sup\{d_X(f(s_1) , f(s_2)) : s_1 , s_2 \in A\}.
\end{align*} 
For $\delta > 0$, the notation 
$\omega_f(\delta)$ denotes the natural extension 
of  modulus of continuity for a general function $f$ defined as  
\begin{align*}
\omega_f(\delta) := \inf \sup\{\omega'_f([t_i, t_{i+1})) :  0 \leq i \leq n-1\},
\end{align*} 
where the infimum is taken over all partitions $0 = t_0 < t_1 < \cdots < t_n = 1$ of $[0,1]$,
with $t_{i+1}-t_i > \delta $ for all $0 \leq i \leq n-1$.
We have the following characterisation of RCLL functions (see page 123 of \cite{B99}):
\begin{lemma}
\label{lem:RCLL_Char}
$f$ is RCLL if and only if  $\lim_{\delta \downarrow 0} \omega_f(\delta) = 0$.
\end{lemma}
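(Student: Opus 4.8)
The plan is to prove the two implications separately, following the classical argument for the Skorohod space (Billingsley \cite{B99}) but keeping track of the fact that $(X,d_X)$ is a general, complete metric space. First note that $\omega_f(\delta)$ is non-decreasing in $\delta$: shrinking $\delta$ enlarges the family of admissible partitions over which the infimum is taken, so the limit $\lim_{\delta\downarrow 0}\omega_f(\delta)=\inf_{\delta>0}\omega_f(\delta)$ exists and it suffices to show that this infimum vanishes precisely when $f$ is RCLL.

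For the forward direction, assume $f$ is RCLL and fix $\epsilon>0$. I would build a finite partition separating the large oscillations by a greedy recursion: set $\tau_0=0$ and
$$\tau_{k+1}:=\sup\{u>\tau_k : \omega'_f([\tau_k,u))\le\epsilon\}.$$
Right-continuity at $\tau_k$ forces $\tau_{k+1}>\tau_k$, and since $\omega'_f([\tau_k,u))$ is non-decreasing in $u$ one gets $\omega'_f([\tau_k,\tau_{k+1}))\le\epsilon$ by taking the union over $u<\tau_{k+1}$. The key point is that this recursion reaches $1$ after finitely many steps: if the $\tau_k$ accumulated at some $\tau_\infty\le 1$, the existence of the left limit $f(\tau_\infty-)$ would supply a neighbourhood $(\tau_\infty-\gamma,\tau_\infty)$ of oscillation $\le\epsilon$, contradicting the definition of $\tau_{k+1}$ as soon as $\tau_k>\tau_\infty-\gamma$. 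This yields a partition $0=\tau_0<\cdots<\tau_N=1$ with $\omega'_f([\tau_i,\tau_{i+1}))\le\epsilon$ for every $i$; choosing any $\delta<\min_i(\tau_{i+1}-\tau_i)$ makes this partition admissible, so $\omega_f(\delta)\le\epsilon$. Letting $\epsilon\downarrow 0$ gives $\lim_{\delta\downarrow 0}\omega_f(\delta)=0$.

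For the reverse direction, assume $\lim_{\delta\downarrow 0}\omega_f(\delta)=0$ and fix $\epsilon>0$. Choosing $\delta$ with $\omega_f(\delta)<\epsilon$ and unwinding the infimum, I would select a partition $0=t_0<\cdots<t_n=1$ with $\omega'_f([t_i,t_{i+1}))<\epsilon$ for all $i$. Right-continuity is then immediate: any $t\in[0,1)$ lies in some $[t_i,t_{i+1})$, and every $s\in[t,t_{i+1})$ lies in the same subinterval, so $d_X(f(s),f(t))<\epsilon$, whence $f(s)\to f(t)$ as $s\downarrow t$. For the left limit at $t\in(0,1]$, I locate the subinterval whose half-open left part abuts $t$ --- namely $[t_i,t_{i+1})$ when $t\in(t_i,t_{i+1})$, and $[t_{j-1},t_j)$ when $t=t_j$ --- obtaining a punctured left neighbourhood $(t-\eta,t)$ on which any two values of $f$ differ by at most $\epsilon$. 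This is exactly the Cauchy criterion along $s\uparrow t$, so $f(t-)$ exists by completeness of $(X,d_X)$ (here $X=\mathcal H$, which is complete).

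The main obstacle I expect is organising the left-limit analysis cleanly in the reverse direction, in particular the bookkeeping for whether $t$ is interior to a subinterval or coincides with a partition point, together with the appeal to completeness of the target space to pass from the Cauchy property to an actual limit. In the forward direction the only delicate step is proving that the greedy recursion terminates, which is precisely where the existence of left limits, combined with compactness of $[0,1]$, is used.
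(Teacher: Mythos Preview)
The paper does not supply its own proof of this lemma but simply defers to Billingsley \cite{B99}, p.~123, and your argument is exactly the classical one found there: the greedy partition construction for the forward direction and the Cauchy-criterion argument (invoking completeness of the target space) for the reverse. Your proof is correct, and your observation that completeness is needed is apt since the paper's intended target $X=\mathcal{H}$ is indeed complete.
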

Before we proceed further, we want to make an useful observation regarding 
the function $\omega(\cdot)$ that just follows from the triangle inequality.
\begin{remark}
\label{rem:Subadditivity}
For an RCLL function $f$ and for any $0 \leq \tau_1 < \tau^\prime < \tau_2 \leq 1$ 
which is not a jump point,  we have
\begin{equation*}
\omega_f([\tau_1, \tau_2)) \leq 
\omega_f([\tau_1, \tau^\prime))  + \omega_f([\tau^\prime, \tau_2)).
\end{equation*}
\end{remark}

Considering the dynamical Brownian web (DyBW) $\{\WW(\tau) : \tau \in [0,1]\}$
as a ${\cal H}$-valued stochastic process over the dynamic time domain $[0,1]$,
with a slight of abuse of notation, we denote corresponding quantities for 
the DyBW as $\omega_{\WW}(\delta)$. Because of Lemma \ref{lem:RCLL_Char}, 
in order to prove Theorem \ref{thm:DYBW_RCLL}, it suffices to prove the following proposition.  
\begin{proposition}
\label{prop:DyBWModCont}
For any $\epsilon > 0$ a.s. there exists (random) $\delta_0 = \delta_0(\epsilon, \omega) > 0$ 
such that for all $\delta \leq \delta_0$, we have 
$\omega_{\WW}(\delta) < \epsilon$.
\end{proposition}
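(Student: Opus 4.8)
The plan is to transfer the whole question to the excursion geometry of the corresponding Brownian net $\NN$, using that on the common probability space every web $\WW(\tau)$, $\tau \in [0,1]$, is a sub-collection of net paths, so that the $d_{{\cal H}}$-increment between two dynamic times is governed by the widths of the net excursions that are ``switched on'' in between. Concretely, the first step is to prove the deterministic control
\begin{equation*}
\omega'_{\WW}([\tau_1,\tau_2)) \le \sup\bigl\{ D^{(x,t)}_{(\tau_1,\tau_2)} : (x,t) \in {\cal S}_{(\tau_1,\tau_2)} \bigr\},
\end{equation*}
valid for every subinterval $[\tau_1,\tau_2) \subseteq [0,1]$. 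The key is a \emph{non-accumulation} property: fix a deterministic starting point $\x$ and let $\pi_1 \in \WW(s_1)$, $\pi_2 \in \WW(s_2)$ be the paths from $\x$ at two dynamic times $s_1,s_2 \in [\tau_1,\tau_2)$. Both are paths of $\NN_{(\tau_1,\tau_2)}$, and wherever they disagree $\pi_2$ has taken the opposite branch at a marked $(1,2)$ point whose switch-parity differs between $s_1$ and $s_2$; since web paths coalesce, the two branches re-meet at the end of that net excursion before any further divergence, so at every time the spatial gap is bounded by the width of a \emph{single} excursion. Hence $d_\Pi(\pi_1,\pi_2) \le \sup_{(x,t)} D^{(x,t)}_{(\tau_1,\tau_2)}$; passing to a countable dense set of starting points and to closures (property $(c)$ of Theorem \ref{theorem:Bwebcharacterisation}), this upgrades to the same bound for $d_{{\cal H}}(\WW(s_1),\WW(s_2))$, and the supremum over $s_1,s_2$ gives the displayed inequality. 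In particular, if $D^\epsilon_{(\tau_1,\tau_2)}=\emptyset$ then $\omega'_{\WW}([\tau_1,\tau_2)) \le \epsilon$. Note the left-open convention on the marking interval is what lets $[\tau_1,\tau_2)$ ignore a switch occurring exactly at its left endpoint $\tau_1$.

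Next I would establish, for each fixed $\epsilon>0$, the a.s. finiteness of the set $D^\epsilon = D^\epsilon_{[0,1]}$ of separation points whose net excursion has $d_\Pi$-diameter exceeding $\epsilon$ (the damping factor $1/(1+|t|)$ in $d_\Pi$ is what makes this a finite, rather than merely locally finite, assertion), together with the fact that each point of $D^\epsilon$ is activated at a single dynamic instant. Here the auxiliary quantities enter: by Brownian scaling an excursion of spatial width $>\epsilon$ must have survival time bounded below and be carried by paths of age bounded below, so $D^\epsilon$ is contained in a set of the form $T^{\epsilon'} \cap A^{\epsilon'}$, and the expected number of such marked $(1,2)$ points is estimated from the Poissonian marking intensity $L \times \ell$ restricted to excursions of survival time and age at least $\epsilon'$, which is finite. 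Because $L$ is not locally finite, this count is carried out along the locally finite approximations $L_n$ of \cite{NRS10} and the bound is passed to the limit. The same monotonicity underlying Step 1 shows that enlarging the dynamic interval only enlarges diameters, so $D^\epsilon_{I} \subseteq D^\epsilon_{[0,1]}$ and $D^{(x,t)}_{I} \le D^{(x,t)}_{[0,1]}$ for every subinterval $I$.

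Finally I would assemble the modulus bound. By finiteness and monotonicity, every large excursion that a subinterval can exhibit is one of the finitely many points of $D^\epsilon_{[0,1]}$, each activated at a single dynamic instant; collecting these finitely many instants as $0<s_1<\cdots<s_K<1$ and setting $\delta_0$ below the minimal gap, I choose any partition of mesh $>\delta$ whose breakpoints contain all the $s_k$. Then each interior open interval $(t_i,t_{i+1})$ avoids every activation time, so $D^\epsilon_{(t_i,t_{i+1})}=\emptyset$, and Step 1 yields $\omega'_{\WW}([t_i,t_{i+1})) \le \epsilon$ for all $i$; hence $\omega_{\WW}(\delta) \le \epsilon$ for all $\delta \le \delta_0$, which is Proposition \ref{prop:DyBWModCont}.

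The hard part is the interaction between Steps 2 and 3. Since $L$ is not locally finite there are infinitely many marked excursions accumulating in every space-time region, and one must show both that only finitely many of them exceed width $\epsilon$ and, crucially, that each such large excursion is activated at a \emph{single} dynamic time rather than being assembled gradually out of infinitely many small sub-excursions spread over a short dynamic interval. Ruling out this latter accumulation --- equivalently, proving that the map $I \mapsto \sup_{(x,t)} D^{(x,t)}_{I}$ has, away from the finitely many activation times, arbitrarily small values on sufficiently short intervals --- is where the survival-time and age estimates and the passage to the limit along $L_n$ do the real work.
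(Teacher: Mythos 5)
Your Steps 1 and 3 reproduce the skeleton of the paper's argument (Step 1 is essentially Remark \ref{rem:EtaSet}(ii) together with the observation opening the proof of Lemma \ref{lem:PoissonClockSetIntersection}; Step 3 is the paper's final partition argument), but Step 2 contains a genuine gap that breaks the proof. You claim that ``by Brownian scaling an excursion of spatial width $>\epsilon$ must have survival time bounded below \emph{and be carried by paths of age bounded below}'', i.e.\ $D^\epsilon \subseteq T^{\epsilon'} \cap A^{\epsilon'}$, and you deduce from this that $D^\epsilon$ is a.s.\ finite. The survival-time half is correct (it is Lemma \ref{lem:PathAge}, proved via equicontinuity of net paths), but the age half does not follow from anything: the width of the excursion concerns the behaviour of the two outgoing branches \emph{after} $t$, while the age concerns incoming paths \emph{before} $t$, and there is no implication from one to the other. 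Worse, the claim is false: the intensity of net separation points with survival time $\geq \delta$ and age $\geq a$ blows up like $a^{-1/2}$ as $a \downarrow 0$ (this is exactly why the $(T_1,T_2)$-separation points of Definition \ref{def:NRS_separation} are locally finite only because of the age constraint built into ``starting from $\R \times \{T_1\}$''), and conditionally on surviving for time $\tilde{\epsilon}$ an excursion has width $>\epsilon$ with probability bounded below, uniformly in the age. Hence $D^\epsilon$ is a.s.\ \emph{infinite}, its ``activation times'' are not finitely many, and the partition of Step 3 cannot be constructed. This is not a fixable technicality of your Step 2; it is the central difficulty of the proposition.

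The paper's proof is structured precisely to avoid ever needing finiteness of $D^\epsilon$. It only proves finiteness of the smaller set $\Xi^\epsilon = D^\epsilon \cap A^{\tilde{\epsilon}}$ (wide \emph{and} old), via Lemma \ref{lem:CmptBox} (a compact-box confinement) and Lemma \ref{lem:SeparPtContains} (inclusion into finitely many sets $S^{T_2}_{T_1}$, each locally finite by Proposition 7.9 of \cite{NRS10} --- note that this inclusion is only possible \emph{because} of the age restriction). The infinitely many wide-but-young separation points are then shown to be harmless in Lemma \ref{lem:PoissonClockSetIntersection}: a large Hausdorff increment must be witnessed by a skeletal path $\pi$, any separation point on the trajectory of $\pi$ at $\tanh$-distance $> \tilde{\epsilon}$ from $\sigma_\pi$ automatically has age $> \tilde{\epsilon}$ (so it would lie in $\Xi^\epsilon$, assumed empty), and the initial segment of age $\leq 2\tilde{\epsilon}$ is controlled by equicontinuity after comparing with the web path started at a non-$(1,2)$ point $(y,s)$ of age roughly $\tilde{\epsilon}$ on the trajectory. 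Your proposal would need to be reorganized along these lines --- replacing ``finiteness of $D^\epsilon$'' by ``finiteness of $D^\epsilon \cap A^{\tilde{\epsilon}}$ plus an argument that small-age wide excursions cannot be detected by the Hausdorff modulus'' --- to close the gap.
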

In other words Proposition \ref{prop:DyBWModCont} tells us that
$\lim_{\delta \downarrow 0} \omega_{\WW}(\delta) = 0$  a.s.
Proposition \ref{prop:DyBWModCont} will be proved through a sequence of lemmas.

We recall the notations introduced in the previous section  and 
it is straightforward to observe that for any $0 \leq \tau_1 < \tau_2 \leq 1$
  we have 
 $$
 D^\epsilon_{[\tau_1, \tau_2]}  \subset D^\epsilon_{[0,1]}
 \text{ and }
 T^\epsilon_{[\tau_1, \tau_2]} \subset T^\epsilon_{[0,1]} \text{ a.s.}
 $$  
Before we proceed further, we make some important remarks. 
\begin{remark}
\label{rem:EtaSet}
\begin{itemize}
\item[(i)] We observe that a $(1,2)$ type point $(x,t)$ of $\WW(\tau)$ belongs to 
the set of separating points ${\cal S}_{[\tau_1, \tau_2)}$ of the corresponding Brownian net
only if it undergoes switching  during the interval $(\tau_1, \tau_2)$
which means that the Poisson clock triggering switching 
event associated to $(x,t)$ must ring during this interval. That is it is a ``marked" point
and marking occurs during the interval $(\tau_1, \tau_2)$.
\item[(ii)] For $0 \leq \tau_1 < \tau_2 \leq 1$, let $\pi_{\tau_1}$ 
and  $\pi_{\tau_2}$ denote the trajectory of a `skeletal' 
Brownian path $\pi$ at dynamic times $\tau_1, \tau_2$ respectively. As $\pi$ is a skeletal Brownian path (starting from a point in $\Q^2$),  we 
have $\sigma_{\pi_{\tau_1}} = \sigma_{\pi_{\tau_2}} \in \Q$. 
If none of the $(1,2)$ type points on the trajectory of $\pi_{\tau_1}$ belongs
to the set $D^\epsilon_{[\tau_1, \tau_2]}$, then we must have 
$ d_\Pi(\pi_{\tau_1}, \pi_{\tau_2}) < \epsilon $.
\end{itemize}
\end{remark}
The next lemma shows that for any $(x,t) \in D^\epsilon_{[\tau_1, \tau_2]}$, the 
quantity $T^{(x,t)}_{[\tau_1, \tau_2]}$ must be sufficiently large as well. 
\begin{lemma}
\label{lem:PathAge}
Fix $\epsilon > 0$. For any interval $A \subseteq [0, 1]$ 
there exists $0 < \tilde{\epsilon} = \tilde{\epsilon}(\epsilon, \omega) < \epsilon$ 
(which does not depend on $A$) such that a.s. we have
 $ D^\epsilon_{A} \subseteq T^{\tilde{\epsilon}}_{A} $.
 \end{lemma}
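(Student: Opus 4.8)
The plan is to reduce the statement to a single uniform modulus of continuity for the paths making up the net, and then to exploit the fact that at a separation point the two relevant paths emanate from the \emph{same} space-time location. Fix a separating point $(x,t)\in{\cal S}(\NN_A)$ and any pair $\pi_1,\pi_2\in\NN_A$ admissible in the definitions of $D^{(x,t)}_A$ and $T^{(x,t)}_A$, so that $\pi_1(t)=\pi_2(t)=x$ and they first recoalesce at $t^c:=t^c_{\pi_1,\pi_2}>t$. For every $s\in[t,t^c]$, since the two paths agree at time $t$ the middle term in the triangle inequality vanishes and
\[
\Bigl|\tfrac{\tanh\pi_1(s)}{1+|s|}-\tfrac{\tanh\pi_2(s)}{1+|s|}\Bigr|
\le \Bigl|\tfrac{\tanh\pi_1(s)}{1+|s|}-\tfrac{\tanh\pi_1(t)}{1+|t|}\Bigr|
+\Bigl|\tfrac{\tanh\pi_2(t)}{1+|t|}-\tfrac{\tanh\pi_2(s)}{1+|s|}\Bigr|.
\]
Thus the whole excursion gap $d_\Pi(\pi_1,\pi_2)_{[t,t^c]}$ is controlled by the transformed spatial displacement of each individual net path over a time window whose $\tanh$-length is at most $\tanh(t^c)-\tanh(t)\le T^{(x,t)}_A$.

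The key input is a (random) nondecreasing modulus $\psi=\psi_\omega$ with $\psi(\delta)\downarrow 0$ as $\delta\downarrow0$, such that a.s. for every $\pi\in\NN:=\NN_{[0,1]}$ and all $s_1\le s_2$ in $[\sigma_\pi,\infty]$ with $\tanh(s_2)-\tanh(s_1)\le\delta$ one has $\bigl|\tfrac{\tanh\pi(s_2)}{1+|s_2|}-\tfrac{\tanh\pi(s_1)}{1+|s_1|}\bigr|<\psi(\delta)$. To obtain $\psi$ I will use that $\NN$ is a.s. a compact subset of $(\Pi,d_\Pi)$, being an $({\cal H},{\cal B}_{\cal H})$-valued random variable. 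Since $d_\Pi$ measures exactly the uniform distance between paths in the transformed spatial coordinate $\tanh(\cdot)/(1+|\cdot|)$ over all times, the standard Arzel\`a--Ascoli argument applies: were equicontinuity to fail one could extract paths $\pi_n\to\pi$ in $d_\Pi$ together with times $s_1^n,s_2^n$ whose $\tanh$-separation tends to $0$ while the transformed spatial gap stays bounded below, contradicting continuity of the limit path $\pi$ at the common limiting time. This produces the desired uniform modulus $\psi$ for all net paths at once.

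Feeding $\psi$ into the displayed bound and taking the supremum over all admissible pairs at $(x,t)$ gives $D^{(x,t)}_A\le 2\psi\bigl(T^{(x,t)}_A\bigr)$. I then choose $\tilde\epsilon=\tilde\epsilon(\epsilon,\omega)\in(0,\epsilon)$ so small that $2\psi(\tilde\epsilon)\le\epsilon$, which is possible because $\psi(\delta)\downarrow0$; crucially $\tilde\epsilon$ depends only on $\psi$, hence only on $\NN_{[0,1]}$ and $\epsilon$, and \emph{not} on $A$. If $(x,t)\in D^\epsilon_A$ then $D^{(x,t)}_A>\epsilon\ge 2\psi(\tilde\epsilon)$, which by monotonicity of $\psi$ forces $T^{(x,t)}_A>\tilde\epsilon$, i.e. $(x,t)\in T^{\tilde\epsilon}_A$. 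Since every $\NN_A\subseteq\NN_{[0,1]}$, the same $\psi$ and $\tilde\epsilon$ serve all intervals $A$ simultaneously, yielding $D^\epsilon_A\subseteq T^{\tilde\epsilon}_A$ a.s. The only genuinely delicate step is the equicontinuity extraction: one must verify that the transformed coordinate is well behaved near the compactification times $\pm\infty$ so that the Arzel\`a--Ascoli argument goes through uniformly over the noncompact time range and over paths with differing starting times $\sigma_\pi$.
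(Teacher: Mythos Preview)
Your argument is correct and follows essentially the same route as the paper's proof: both exploit almost-sure equicontinuity of the full net $\NN=\NN_{[0,1]}$ (the paper asserts it directly, you derive it from compactness in $(\Pi,d_\Pi)$ via Arzel\`a--Ascoli), then use the triangle inequality at the common point $(x,t)$ to bound the excursion gap by twice the one-path modulus over a $\tanh$-time window of length $T^{(x,t)}_A$, and finally choose $\tilde\epsilon$ independent of $A$ from this modulus. The paper encodes this by defining $\tilde\epsilon$ directly as the largest $\delta<\epsilon/3$ for which every net path varies less than $\epsilon/3$ over $\tanh$-time windows of size $\delta$, whereas you phrase it via a modulus $\psi$ and pick $\tilde\epsilon$ with $2\psi(\tilde\epsilon)\le\epsilon$; these are equivalent formulations of the same idea.
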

\begin{proof}
We observe that 
$$
\pi \in \WW(\tau )\text{ for some }\tau \in [0,1] \text{ only if }
\pi \in \NN,
$$
where $\NN$ denotes the {\it corresponding} Brownian net. 
We also note that a.s. paths in the Brownian net $\NN$ form 
an equicontinuous path family. 
This allows us to define $ \tilde{\epsilon} = \tilde{\epsilon}(\epsilon, \omega)> 0$  as 
\begin{align}
\label{eq:epsilon^tilde_choice}
\tilde{\epsilon} := \sup\{ & \delta \in (0, \epsilon/3) : \text{ for any }\pi \in \NN \text{ and }
u_1, u_2 \geq  \sigma_{\pi}\text{ with }|\tanh(u_1) - \tanh(u_2)| \leq \delta \nonumber \\ 
&  \text{ we have } |\frac{\pi(u_1)}{1 + |u_1|} - \frac{\pi(u_2)}{1 + |u_2|}| < \epsilon/3 \}.
\end{align}
By definition, we have $\tilde{\epsilon} < \epsilon/3$ a.s. 
Equation (\ref{eq:epsilon^tilde_choice}) ensures that for any $\pi_1, \pi_2 \in \NN, t \geq 
\sigma_{\pi_1}\vee \sigma_{\pi_2}$ with  $\pi_1(t) = \pi_2(t)$
and for any interval $I \subset [t, \infty)$
satisfying $|\tanh(u) - \tanh(t)| \leq \tilde{\epsilon} $ for all $u \in I$, we have 
$d_\Pi(\pi_1, \pi_2)_I < \epsilon$. This follows from the observation that
\begin{align*}
|\frac{\pi_1(u_1)}{1 + |u_1|} - \frac{\pi_2(u_2)}{1 + |u_2|}| \leq 
|\frac{\pi_1(u_1)}{1 + |u_1|} - \frac{\pi_1(t)}{1 + |t|} | + 
| \frac{\pi_2(t)}{1 + |t|} - \frac{\pi_2(u_2)}{1 + |u_2|}| < \epsilon
\text{ for all }u_1, u_2 \in I.
\end{align*}
For any $(x,t) \in D^\epsilon_{A}$ there exist 
paths $\pi_1, \pi_2$ in the corresponding Brownian net 
$\NN_{A}$ passing through $(x,t)$  with $d_\Pi(\pi_1, \pi_2)_{[t, \tanh(t^c_{\pi_1, \pi_2})]} > \epsilon$. 
In order to have $d_\Pi(\pi_1, \pi_2)_{[t, \tanh(t^c_{\pi_1, \pi_2})]} > \epsilon$, 
we must have 
$$
\tanh(t^c_{\pi_1, \pi_2}) > \tanh(t) + \tilde{\epsilon}.
$$
 In other words, the paths $\pi_1$ and  $\pi_2$ are allowed to intersect only after time $t_0$
where $\tanh(t_0) - \tanh(t) > \tilde{\epsilon}$.
This implies that $T^{(x,t)}_{[\tau_1, \tau_2]} > \tilde{\epsilon}$
 and completes the proof.  
\end{proof}

Next, for $\epsilon > 0$ and for any interval $A \subset [0 , 1]$ we define 
\begin{align}
\Xi^\epsilon_{A} = \Xi^\epsilon(\NN_{A}) := 
D^\epsilon_{A} \cap \text{A}^{\tilde{\epsilon}}_{A},
\end{align}
where $\tilde{\epsilon} > 0$ is as in (\ref{eq:epsilon^tilde_choice}).
For simplicity of notations, we set $\Xi^\epsilon := \Xi^\epsilon_{[0, 1]}$.
It is not difficult to observe that for any $0 \leq \tau_1 < \tau_2 \leq 1$ we have
$ \Xi^\epsilon_{[\tau_1, \tau_2]} \subset \Xi^\epsilon_{[0, 1]}$.  

Lemma \ref{lem:PathAge} gives us that  
$$
\Xi^\epsilon_{[\tau_1, \tau_2]} \subseteq 
\text{A}^{\tilde{\epsilon}}_{[\tau_1, \tau_2]} 
\cap T^{{\tilde{\epsilon}}}_{[\tau_1, \tau_2]}.
$$ 
\begin{lemma}
\label{lem:CmptBox}
Fix $\epsilon > 0$. There exists $M = M(\epsilon, \omega) \in \N $ such that
for any interval $A \subset [0,1]$ we have 
\begin{align}
\label{eq:CmptBox}
\Xi^\epsilon_{A} \subseteq \Xi^\epsilon_{[0,1]} \subset [-M, M]^2. 
\end{align}
\end{lemma}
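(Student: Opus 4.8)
The plan is to prove the two inclusions in \eqref{eq:CmptBox} separately. The first, $\Xi^\epsilon_A \subseteq \Xi^\epsilon_{[0,1]}$, is soft: since $A \subseteq [0,1]$, the Poissonian marking used to build $\NN_A$ is a sub-collection of the one used for $\NN_{[0,1]}$, so $\NN_A \subseteq \NN_{[0,1]}$ as path families. The separation set ${\cal S}(\cdot)$ and each of the functionals $D^{(x,t)}, A^{(x,t)}, T^{(x,t)}$ are monotone under enlarging the family (they are suprema over pairs/paths of the family), and the constant $\tilde\epsilon$ from Lemma \ref{lem:PathAge} does not depend on $A$. Hence $D^\epsilon_A \subseteq D^\epsilon_{[0,1]}$ and $A^{\tilde\epsilon}_A \subseteq A^{\tilde\epsilon}_{[0,1]}$, and intersecting gives $\Xi^\epsilon_A \subseteq \Xi^\epsilon_{[0,1]}$. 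It therefore suffices to produce a finite random $M$ with $\Xi^\epsilon := \Xi^\epsilon_{[0,1]} \subset [-M,M]^2$.

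\textbf{Temporal bound.} Fix $(x,t)\in\Xi^\epsilon$. By Lemma \ref{lem:PathAge} we have $\Xi^\epsilon \subseteq A^{\tilde\epsilon}\cap T^{\tilde\epsilon}$, so both age and survival at $(x,t)$ exceed $\tilde\epsilon$. From $A^{(x,t)}>\tilde\epsilon$ there is a net path $\pi$ through $(x,t)$ with $\tanh t-\tanh\sigma_\pi>\tilde\epsilon$; since $\tanh\sigma_\pi\ge -1$ this forces $t> t_-:=\tanh^{-1}(-1+\tilde\epsilon)$. From $T^{(x,t)}>\tilde\epsilon$ there is a separating pair with $\tanh t^c_{\pi_1,\pi_2}-\tanh t>\tilde\epsilon$, and $\tanh t^c_{\pi_1,\pi_2}\le 1$ gives $t< t_+:=\tanh^{-1}(1-\tilde\epsilon)$. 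Thus $t\in(t_-,t_+)$, a deterministic window once $\tilde\epsilon=\tilde\epsilon(\epsilon,\omega)$ is fixed.

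\textbf{Spatial bound.} The diameter is measured in the compactified metric, which is exactly what controls the space variable. If $D^{(x,t)}>\epsilon$ there are $\pi_1,\pi_2\in\NN$ with $\pi_1(t)=\pi_2(t)=x$ and a time $s^*\in[t,t^c_{\pi_1,\pi_2}]$ with $\bigl|\tanh\pi_1(s^*)-\tanh\pi_2(s^*)\bigr|/(1+|s^*|)>\epsilon$. Since $|\tanh|\le 1$ the left-hand side is at most $2/(1+|s^*|)$, so $|s^*|<2/\epsilon$; together with $s^*\ge t> t_-$ this places $s^*$ in the bounded window $W:=[t_-,\,2/\epsilon]$. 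Writing $q(\pi,s):=\tanh\pi(s)/(1+|s|)$ and letting $\pi_2$ be the smaller of the two at $s^*$, we get $q(\pi_2,s^*)< q(\pi_1,s^*)-\epsilon\le (1+|s^*|)^{-1}-\epsilon$, hence $\tanh\pi_2(s^*)<1-\epsilon(1+|s^*|)\le 1-\epsilon$ and therefore $\pi_2(s^*)< K_\epsilon:=\tanh^{-1}(1-\epsilon)$, a constant independent of $x$ and $s^*$. Consequently, if $x>0$ is large, the net path $\pi_2$ crosses the whole interval $[K_\epsilon,x]$ downward during the bounded window $W$ (the analogous statement with a lower constant $-K_\epsilon$ holds for $x<0$ large).

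\textbf{Conclusion via a crossing estimate.} It now suffices to show that a.s. there is a finite $M_+$ such that no net path is $\ge N$ at some time of $W$ and $\le K_\epsilon$ at a later time of $W$ whenever $N>M_+$ (and symmetrically $M_-$ on the left, taking $M=\max(M_+,M_-,|t_-|,t_+)$). Using that every net path from $(y,r)$ stays above the leftmost path $l_{y,r}$ of the underlying left web $\WW^l$ (a Brownian motion with drift $-1$), such a crossing forces some left path to go from $\ge N$ down to $\le K_\epsilon$ within $W$, i.e. it expresses that the net does not ``come down from infinity'' in finite time. I expect this last step to be the \emph{main obstacle}, since it must control all net paths simultaneously rather than a path from a single fixed start. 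I would handle it by reducing, via the left–right squeezing of net paths together with order preservation in $\WW^l$ and the a.s.\ local finiteness of the left paths present at time $t_-$, to the leftmost path starting from the lowest present level $\ge N$, for which the standard maximal/hitting estimate for drifted Brownian motion gives a probability bound of Gaussian type $C e^{-cN^2}$; summing over $N\in\N$ and applying Borel–Cantelli yields the finite bound $M_+$. Paths that first rise above $N$ only after time $t_-$ contribute an even smaller probability and are absorbed into the same estimate.
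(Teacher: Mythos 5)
Your first two steps are sound and essentially match the paper: the monotonicity giving $\Xi^\epsilon_A \subseteq \Xi^\epsilon_{[0,1]}$, the temporal window $t \in (t_-, t_+)$ extracted from $A^{(x,t)} \wedge T^{(x,t)} > \tilde\epsilon$, and the observation that the compactified diameter forces one of the two separating paths below $K_\epsilon = \tanh^{-1}(1-\epsilon)$ (resp.\ above $-K_\epsilon$) within a bounded time window $W$ --- the paper packages these same facts as its conditions (i)--(ii) defining $M_1$ (time coordinate in $[-M_1,M_1]$, and at least one outgoing path meeting $[-M_1,M_1]^2$), and you actually spell out the $\tanh$ computation the paper leaves implicit. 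Where you diverge is the final ``crossing'' step. The paper does not pass to left-web paths: it tiles the exterior of the box with unit rectangles $R_j = [-M_1-j-1,-M_1-j]\times[-M_1,M_1]$, shows via Markov's inequality and Borel--Cantelli that a.s.\ $R_j$ contains at most $j^2$ points of $\Xi^\epsilon$ for all large $j$ (the random variable $\tilde N_1$), and then union-bounds, over these at most $j^2$ points, the probability that an outgoing path crosses distance $j$ back to the central box in time at most $2M_1$, which is exponentially small for a drifted Brownian motion; a second Borel--Cantelli then yields the a.s.\ spatial bound (finiteness of $\tilde N_2$).

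The genuine gap is in your reduction. The witness your argument produces is the left path $l_{(x,t)}$ started at the random marked point $(x,t)$ itself: it sits at level $x \ge N$ at time $t \in W$ and is dragged below $K_\epsilon$ by time $s^*$. This path is born \emph{inside} the window $W$, already at height $\ge N$; it is not among ``the left paths present at time $t_-$'', and it never ``rises above $N$'' after $t_-$, so neither your local-finiteness reduction nor your closing sentence about absorbing later-starting paths covers it --- and that closing sentence is an assertion, not an argument. To make your route work you need a device converting a crossing by a randomly-started path into a crossing by a path from a deterministic family of controlled size: for instance, non-crossing of the left web against paths started from grid points $(N-1,u_k)$, with $\{u_k\}$ a finite $\delta$-mesh of $W$ (if the crossing path is still $\ge N-1$ at the first mesh time after $t$, the path from $(N-1,u_k)$ is trapped below it and must itself descend to $K_\epsilon$), together with control of the modulus of continuity of web paths over the mesh gap; alternatively, observe that the events ``some net path descends from $\ge N$ to $\le K_\epsilon$ within $W$'' are decreasing in $N$, so it suffices that their probabilities tend to $0$, avoiding Borel--Cantelli over $N$ altogether. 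Without some such step, the union bound you invoke runs over an uncontrolled family of random paths and does not close. This bookkeeping --- bounding the multiplicity of random witnesses before applying the Gaussian tail --- is precisely what the paper's $j^2$-counting via $\tilde N_1$ accomplishes, and it is the ingredient your sketch is missing.
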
 

\begin{proof}
By definition, for any $(x,t) \in \Xi^\epsilon$ we have 
$$
A^{(x,t)}_{[0,1]}\wedge T^{(x,t)}_{[0,1]} > \tilde{\epsilon} > 0.
$$
Hence, we can choose  $M_1 > 0$ such that for any $(x,t) \in \Xi^\epsilon$ 
both the following conditions hold:

\begin{itemize}
\item[(i)] $t \in [-M_1, M_1]$ as well as  
\item[(ii)] at least one of the two outgoing paths starting from $(x,t)$
intersects the region $[-M_1, M_1]^2$.
\end{itemize}
We will show that there exists $M = M(\epsilon, \omega) \geq M_1$ such that 
the region $[-M, M]^2$ contains the set $\Xi^\epsilon$. We will present 
a sketch of the proof here. 

First we Consider outgoing paths starting from $(1,2)$-type points from 
left of the box $[-M_1, M_1]^2$. For $j\geq 1$ we define the rectangle 
$R_j $ as $R_j := [- M_1 - j-1 , - M_1 - j ]\times[-M_1,M_1]$. 
A non-negative integer valued r.v. $\tilde{N}_1$ is defined as 
$$
\tilde{N}_1 = \sup_j \{ j  : R_j \text{ contains at least }j^2 \text{ many }\Xi^\epsilon 
\text{ points}\}.
$$
We need to show that $\tilde{N}_1$ is a.s. finite. Towards that for $j \geq 1$ we define the event:
\begin{align*}
B^1_j := \{\text{The set } R_j
\text{ contains at least }j^2 \text{ many }\Xi^\epsilon \text{ points}\}. 
\end{align*}     
For $j \geq 1$, the total number of $\Xi^\epsilon$ points in $R_j$ is dominated by 
the total number of $(1,2)$ type points in $R_j$. The total number of $(1,2)$ 
type points in $R_j$ is of finite expectation and hence, 
Markov's inequality gives us $\sum_{j \geq 1} \P(B^1_j) < \infty$. By applying Borel Cantelli lemma 
we conclude that $\tilde{N}_1$ is finite a.s.  

Next, we define the r.v. $\tilde{N}_2$ as 
\begin{align*}
\tilde{N}_2 = \sup \{ & j \geq 1  : \text{ the box }R_j \text{ contains at least } 
\text{ one }\Xi^\epsilon \text{ point with at 
least one  } \\ 
& \text{ of the outgoing paths intersects }[-M_1,M_1]^2 \}.
\end{align*}
In order to prove Lemma \ref{lem:CmptBox} it suffices to show that 
the r.v. $\tilde{N}_2$ is also a.s. finite. Towards that, for $j \geq 1$ we define
the event
\begin{align*}
B_j^2 =&\{ j \geq 1 : \text{ at least one of the outgoing paths starting from an element of }\\
 &\Xi^\epsilon \text{ in }R_j \text{ intersects }[-M_1,M_1]^2 \}.
\end{align*}
For $k \in \{0,1, 2, \cdots\}$ let $\Omega_k$ denote the event that 
$ \Omega_k := \{\tilde{N}_1 = k\} $. Note that the collection $\{\Omega_k : k \geq 0\}$
gives a partition of $\Omega$. For any $k \geq 0$ we show that on the event $\Omega_k$ we have
\begin{align}
\label{eq:BC_bound}
\sum_{j=1}^\infty \P(B^2_j \cap \Omega_k)< \infty.
\end{align}
By Borel Cantelli lemma, Equation (\ref{eq:BC_bound}) implies that 
on the event $\Omega_k$ the r.v. $\tilde{N}_2$ is a.s. finite. As $\{\Omega_k : k \geq 0\}$
forms a partition of  the whole space, this completes the proof.
  
The argument for showing (\ref{eq:BC_bound}) is standard and we only
give a sketch here. With a slight abuse of notation let $B(t)_{t \geq 0}$ denote a Brownian 
motion with drift $1$. Application of union bound together with 
translation invariance nature of our model allows us to bound
the probability $\P(B^2_j \cap \Omega_k)$ for any $j > k$ as 
$$
\P(B^2_j \cap \Omega_k) \leq j^2 \P(\sup_{t \in [0,2M_1]} |B(t)| > j).
$$
As $j \to \infty $ the probability $\P(\sup_{t \in [0,2M_1]} |B(t)| > j)$ decays 
exponentially in $j$ and hence, we have $\sum_{j =1}^\infty \P(B^2_j \cap \Omega_k) < \infty $
for all $k \geq 0$.  This completes the proof.
\end{proof}
 
Next, we show that the set $\Xi^\epsilon$ must be finite a.s. 
Let $\epsilon^\prime = \epsilon^\prime(\epsilon, \omega)$ be 
an a.s. strictly positive random variable defined as 
\begin{equation}
\label{eqn:epsilon^prime_choice}
\epsilon^\prime  := \sup\{ 0 <\delta < \tilde{\epsilon} : 
|s_1 - s_2| < \delta \text{ implies that }|\tanh(s_1) - \tanh(s_2)| < \tilde{\epsilon}\},
\end{equation} 
where $\tilde{\epsilon}$ is as in (\ref{eq:epsilon^tilde_choice}).
We are now ready to state the next lemma.

\begin{lemma}
\label{lem:SeparPtContains}
Fix $\epsilon > 0$. Set $\epsilon^\prime = \epsilon^\prime(\epsilon, \omega) > 0$ as in (\ref{eqn:epsilon^prime_choice}) and $M = M(\omega, \epsilon)$ as in Lemma \ref{lem:CmptBox}.
 For all $0 \leq \tau_1 < \tau_2 \leq 1$ a.s. we have 
\begin{align*}
\Xi^\epsilon_{[\tau_1, \tau_2]} \subseteq 
\Bigl ( \bigl ( \cup_{j=0}^{\lfloor 2M / \epsilon^\prime \rfloor} S^{- M + 
(j+1)\epsilon^\prime}_{ - M + j\epsilon^\prime} 
\bigr ) \cap [-M, M]^2 \Bigr ),
\end{align*}
where for $T_1 < T_2$ the set $S^{T_2}_{T_1}$ is defined as in Definition \ref{def:NRS_separation}.
\end{lemma}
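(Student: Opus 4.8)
The plan is to fix an arbitrary $(x,t) \in \Xi^\epsilon_{[\tau_1,\tau_2]}$ and exhibit a single grid interval $[T_1, T_2]$, with $T_1 = -M + j\epsilon^\prime$ and $T_2 = -M + (j+1)\epsilon^\prime$ for a suitable $j \in \{0, \dots, \lfloor 2M/\epsilon^\prime \rfloor\}$, for which $(x,t)$ satisfies Definition \ref{def:NRS_separation}, i.e. $(x,t) \in S^{T_2}_{T_1}$. Lemma \ref{lem:CmptBox} already gives $(x,t) \in [-M,M]^2$, which both supplies the factor $[-M,M]^2$ on the right-hand side and localizes the separation time as $t \in [-M,M]$; hence there is at least one index $j$ with $t \in (T_1, T_2)$. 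All that remains is to produce the two witnessing net paths.

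First I would harvest the two one-sided time gaps encoded in $\Xi^\epsilon_{[\tau_1,\tau_2]}$. By construction $\Xi^\epsilon_{[\tau_1,\tau_2]} = D^\epsilon_{[\tau_1,\tau_2]} \cap A^{\tilde\epsilon}_{[\tau_1,\tau_2]}$, and Lemma \ref{lem:PathAge} refines this to $\Xi^\epsilon_{[\tau_1,\tau_2]} \subseteq A^{\tilde\epsilon}_{[\tau_1,\tau_2]} \cap T^{\tilde\epsilon}_{[\tau_1,\tau_2]}$. Thus, in $\NN_{[\tau_1,\tau_2]}$ there is an incoming path $\pi_0$ through $(x,t)$ with $\tanh(t) - \tanh(\sigma_{\pi_0}) > \tilde\epsilon$ (the age), and diverging paths $\pi_1, \pi_2$ through $(x,t)$ with $\tanh(t^c_{\pi_1,\pi_2}) - \tanh(t) > \tilde\epsilon$ (the survival time). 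Using the defining property (\ref{eqn:epsilon^prime_choice}) of $\epsilon^\prime$ together with the $1$-Lipschitz property of $\tanh$, these $\tanh$-gaps translate into genuine time gaps $t - \sigma_{\pi_0} > \epsilon^\prime$ and $t^c_{\pi_1,\pi_2} - t > \epsilon^\prime$. I would then choose $j$ so that $t \in (T_1, T_2)$; since $T_2 - T_1 = \epsilon^\prime$ and $t$ lies strictly inside, these gaps force $\sigma_{\pi_0} < t - \epsilon^\prime < T_1$ and $t^c_{\pi_1,\pi_2} > t + \epsilon^\prime > T_2$.

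The core step is then to assemble two paths of the full net $\NN$ realizing the NRS condition. A separation point is necessarily a marked $(1,2)$ point, and since the markings used for $\NN_{[\tau_1,\tau_2]}$ are among those used for $\NN = \NN_{[0,1]}$, the point $(x,t)$ is a marked $(1,2)$ point of $\NN$ with a unique incoming direction and two distinct outgoing branches; moreover $\pi_0, \pi_1, \pi_2 \in \NN_{[\tau_1,\tau_2]} \subseteq \NN$. Invoking the construction of the Brownian net in \cite{NRS10}, in which a net path reaching a marked $(1,2)$ point may be continued along either outgoing branch, I would concatenate the restriction of $\pi_0$ on $[T_1, t]$ (well defined because $\sigma_{\pi_0} < T_1$) with the restriction of $\pi_1$ on $[t, \infty)$, and separately with the restriction of $\pi_2$ on $[t, \infty)$. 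Both concatenations are bona fide paths of $\NN$ that start from the single point $(\pi_0(T_1), T_1) \in \R \times \{T_1\}$, separate at $(x,t)$, and whose first meeting after $t$ is $t^c_{\pi_1,\pi_2} > T_2$, so they do not intersect on $(t, T_2]$. This is exactly Definition \ref{def:NRS_separation}, giving $(x,t) \in S^{T_2}_{T_1} \subseteq \bigl(\cup_{j} S^{-M+(j+1)\epsilon^\prime}_{-M+j\epsilon^\prime}\bigr) \cap [-M,M]^2$, as required.

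I expect the main obstacle to be the justification of the concatenation: that gluing the early segment of $\pi_0$ to each of the two outgoing branches at the marked $(1,2)$ point yields genuine net paths starting at time $T_1$. This rests on the closure of the Brownian net under hopping at marked $(1,2)$ points, which is built into the common construction of $\NN$ recalled from Theorem 5.5 and Proposition 6.1 of \cite{NRS10}; one must check in particular that the incoming trajectory of $\pi_0$ agrees with the common incoming direction at $(x,t)$, so that both hops are admissible. By contrast, the passage from $\tanh$-gaps to real-time gaps via (\ref{eqn:epsilon^prime_choice}), the selection of the index $j$, and the treatment of the boundary event that $t$ coincides with a grid endpoint are routine bookkeeping and follow from the strict inequalities in the definitions of age and survival time.
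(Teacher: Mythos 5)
Your proposal is correct and follows essentially the same route as the paper's proof: localize via Lemma \ref{lem:CmptBox}, extract the age margin from the definition of $\Xi^\epsilon$ and the survival margin from Lemma \ref{lem:PathAge}, convert the $\tanh$-gaps into real-time gaps of size $\epsilon^\prime$ via (\ref{eqn:epsilon^prime_choice}), and place $t$ strictly inside a grid interval of width $\epsilon^\prime$. The only differences are that you make explicit, via hopping at the marked $(1,2)$ point, the construction of the two witnessing net paths that the paper merely asserts (a welcome addition, justified by the net's closure under hopping which the paper itself invokes later), and that the grid-boundary case is handled in the paper by the observation that a.s. no $(1,2)$ point lies on a fixed horizontal line, rather than by the strict inequalities in the age and survival definitions as you suggest.
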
 
\begin{proof}
As argued earlier, it is enough to prove Lemma \ref{lem:SeparPtContains} 
for $\Xi^\epsilon = \Xi^\epsilon_{[0,1]}$. Because of Lemma \ref{lem:CmptBox}, 
it suffices to show that any $(x,t)$ in $\Xi^\epsilon$ must 
belong to $S^{- M + (j+1)\epsilon}_{ - M + j\epsilon}$ for some $0 \leq j \leq 
\lfloor 2M / \epsilon\rfloor $. By definition, for any $(x,t) \in \Xi^\epsilon$, 
we must have $\text{A}^{(x,t)}_{s_1} > \tilde{\epsilon}$ for some $s_1 \in [0,1]$. 
The choice of $\epsilon^\prime$ ensures that in the corresponding Brownian net $\NN$, 
there must be an incoming path which starts before 
$t - \epsilon^\prime$ and passes through $(x,t)$. 
On the other hand, Lemma \ref{lem:PathAge} ensures that 
we also have $T^{(x,t)}_{[\tau_1, \tau_2]} > \tilde{\epsilon}$.

This ensures that in the corresponding Brownian net $\NN$, 
there are two outgoing paths which start before time
$t - \epsilon^\prime$, get separated at $(x,t)$ and thereafter, they are 
not allowed to meet before time $t + \epsilon^\prime$. 
 Finally,   for any deterministic $s$, the set $\R \times \{s\}$ can not contain any
$(1,2)$ type points. So for any $(x,t) \in \Xi^\epsilon$, there must exist $0 \leq j^\prime \leq 
\lfloor 2M/ \epsilon^\prime \rfloor$ such that $t \in (-M + j^\prime\epsilon^\prime, 
-M + (j^\prime + 1)\epsilon^\prime)$ and the outgoing paths remain separated till time 
$-M + (j^\prime + 1)\epsilon^\prime$. Together with Lemma \ref{lem:CmptBox}, 
this completes the proof.
\end{proof}
For any interval $A \subset [0, 1]$ we consider the set $\Xi^\epsilon_{A}$
and the set of corresponding Poisson clock rings for switching events is denoted as 
 $\Lambda^\epsilon_{A}$. Clearly, $\Lambda^\epsilon_{A}$
 is a random subset of  $ A \subseteq  [0,1]$. 
 For ease of notation the set $\Lambda^\epsilon_{[0, 1]}$ is simply denoted 
 as $\Lambda^\epsilon$. Clearly, for any $A \subseteq [0,1]$ we have 
$$
\Xi^\epsilon_{A} \subseteq \Xi^\epsilon\text{ as well as }
\Lambda^\epsilon_{A} \subseteq \Lambda^\epsilon.
$$ 
Since $S^{- M + (j+1)\epsilon^\prime}_{ - M + j\epsilon^\prime} $ 
are locally finite (see Proposition 7.9 of \cite{NRS10})
Lemma \ref{lem:SeparPtContains} gives  us that the set $\Xi^\epsilon$ is 
{\it finite} almost surely . This implies that 
that the set $ \Lambda^\epsilon $ is a.s. finite as well.
The next lemma shows that for any $0 \leq \tau_1 < \tau_2 \leq 1$
in order to have $\omega_{\WW}([\tau_1, \tau_2)) > \epsilon$
 the set $\Lambda_{[\tau_1, \tau_2)}\cap (\tau_1, \tau_2)$ must be non-empty.
\begin{lemma}
\label{lem:PoissonClockSetIntersection}
For $ 0 \leq \tau_1 < \tau_2 \leq 1$ and for $\epsilon > 0$, on the 
event $\Lambda^\epsilon_{[\tau_1, \tau_2)}\cap (\tau_1, \tau_2) = \emptyset $
we must have $\omega_{\WW}([\tau_1, \tau_2)) \leq \epsilon$.
\end{lemma}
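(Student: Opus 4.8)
The plan is to prove the stated implication directly: on the event that no switching clock of a point of $\Xi^\epsilon_{[\tau_1,\tau_2)}$ rings during $(\tau_1,\tau_2)$, I will show $d_{\mathcal H}(\WW(s_1),\WW(s_2))\le\epsilon$ for every pair $s_1<s_2$ in $[\tau_1,\tau_2)$, which bounds $\omega_{\WW}([\tau_1,\tau_2))$. The first reduction is to the skeleton: since each $\WW(s)$ is the a.s.\ closure of the paths started from the countable dense set $\Q^2$ and every space-time point carries an outgoing web path, it is enough to match, for each skeletal starting point and its trajectories $\pi_{s_1},\pi_{s_2}$ at the two dynamic times, the path $\pi_{s_2}$ to within $d_\Pi$-distance $\epsilon$ of some path in $\WW(s_1)$ (and symmetrically), and then pass to closures to control the Hausdorff distance. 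The second reduction is structural: $\WW(s_2)$ is obtained from $\WW(s_1)$ by switching exactly the points marked during $(s_1,s_2]$, i.e.\ the separation points of $\NN_{(s_1,s_2]}$; by Remark \ref{rem:EtaSet}(i) each such point has a clock ringing in $(s_1,s_2]\subseteq(\tau_1,\tau_2)$, so by the monotonicity $\Xi^\epsilon_{(s_1,s_2]}\subseteq\Xi^\epsilon_{[\tau_1,\tau_2)}$ the event forces $\Xi^\epsilon_{(s_1,s_2]}=\emptyset$. Hence every switched point has diameter $\le\epsilon$ or age $\le\tilde\epsilon$.

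Next I would split the time axis of the fixed skeletal path at the instant $t^\ast$ with $\tanh(t^\ast)-\tanh(\sigma_{\pi})=\tilde\epsilon$ into a young zone $[\sigma_{\pi},t^\ast]$ and an old zone $[t^\ast,\infty)$. On the old zone, any switched separation point $(x,t)$ admits $\pi_{s_2}$ itself (a path of $\NN_{(s_1,s_2]}$ started at $\sigma_\pi$) as an incoming path whose tanh-age exceeds $\tilde\epsilon$, so $A^{(x,t)}_{(s_1,s_2]}>\tilde\epsilon$; since it is not in $\Xi^\epsilon_{(s_1,s_2]}$, its diameter must be at most $\epsilon$. Thus on the old zone only diameter-$\le\epsilon$ switchings are active, and the excursion bound of Remark \ref{rem:EtaSet}(ii) controls the trajectories there.

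To produce the global match I would take $\pi''\in\WW(s_1)$ to be the $\WW(s_1)$-path through the space-time point $(\pi_{s_2}(t^\ast),t^\ast)$. Its starting time differs from $\sigma_\pi$ by $\tilde\epsilon<\epsilon/3$, which controls the starting-time term of $d_\Pi$. On the young zone both $\pi_{s_2}$ and $\pi''$ are paths of the net $\NN$, and the equicontinuity built into the choice of $\tilde\epsilon$ in (\ref{eq:epsilon^tilde_choice}) keeps each within $\epsilon/3$ (in the spatial part of $d_\Pi$) of $\pi_{s_2}(t^\ast)$, so their discrepancy on $[\sigma_\pi,t^\ast]$ is below $2\epsilon/3$. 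On the old zone $\pi_{s_2}$ and $\pi''$ leave the common point $(\pi_{s_2}(t^\ast),t^\ast)$ and differ only through diameter-$\le\epsilon$ switchings, so the excursion bound keeps them within $\epsilon$. Combining the two zones gives $d_\Pi(\pi_{s_2},\pi'')\le\epsilon$; the reverse matching of each $\pi_{s_1}$ to a path of $\WW(s_2)$ is symmetric, and taking suprema over the dense skeleton yields $d_{\mathcal H}(\WW(s_1),\WW(s_2))\le\epsilon$.

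I expect the young zone to be the main obstacle: there, large-diameter switchings are not ruled out by the event, so the naive comparison of $\pi_{s_1}$ with $\pi_{s_2}$ can drift, and the purpose of intersecting $D^\epsilon$ with the age set $A^{\tilde\epsilon}$ is precisely to confine such switchings to a short initial segment on which equicontinuity of net paths, rather than excursion control, supplies the estimate. Making the constants close at $\epsilon$ rather than at a larger multiple is what forces the match against the shifted path $\pi''$ agreeing with $\pi_{s_2}$ at $t^\ast$ and the $\epsilon/3$ budgeting in the definition of $\tilde\epsilon$. A secondary technical point is to justify the passage from the full compact path sets to their countable skeletons and the existence of the matching paths $\pi''$ started from the random points $(\pi_{s_2}(t^\ast),t^\ast)$.
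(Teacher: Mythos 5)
Your proposal is correct and is essentially the paper's own argument: both reduce to skeletal paths, use $\Xi^\epsilon_{[\tau_1,\tau_2)}=\emptyset$ to force the dichotomy ``large diameter $\Rightarrow$ small age,'' split the trajectory at a point of tanh-age roughly $\tilde{\epsilon}$, compare against the unique web path started at that splitting point, and control the initial segment by the equicontinuity built into the choice of $\tilde{\epsilon}$ in (\ref{eq:epsilon^tilde_choice}) and the remainder by the diameter-$\leq\epsilon$ excursion bound. The only differences are cosmetic: the paper matches $\pi_{s_1}$ to a path of $\WW(s_2)$ started from a non-$(1,2)$ point with age in the open interval $(\tilde{\epsilon},2\tilde{\epsilon})$ (which cleanly handles the edge cases you flag at exactly $t^\ast$), rather than your mirror-image matching of $\pi_{s_2}$ into $\WW(s_1)$.
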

\begin{proof} 
We first observe that in order to have $\omega_{\WW}([\tau_1, \tau_2)) > \epsilon$, 
there must be dynamic times $s_1, s_2 \in [\tau_1, \tau_2)$ with 
$s_1 < s_2$ and a skeletal Brownian path $\pi$ (i.e., starting from a point in $\Q^2$)
such that 
\begin{equation}
\label{eq:SkeletalPath_1}
d_{{\cal H}}(\{\pi_{s_1}\}, \WW(s_2)) > \epsilon,
\end{equation}
where $\pi_{s_1}$ denotes the trajectory of the skeletal path $\pi$ at dynamic time $s_1$.
A skeletal path starts from a point in $\Q^2$ (which is not a $(1,2)$ type point a.s.)
and hence starting time of a skeletal path does not change over dynamic time interval. 
Therefore the starting time of a skeletal path $\pi$ 
does not depend on dynamic time $s_1$ and denoted by $\sigma_{\pi}$. 

Equation (\ref{eq:SkeletalPath_1}) necessarily implies that $d_{\Pi}(\pi_{s_1}, \pi_{s_2}) > \epsilon$
where $\pi_{s_2}$ denotes the trajectory of the same skeletal path $\pi$ at dynamic time $s_2$.
Hence, in order to have (\ref{eq:SkeletalPath_1}), there must be a separating 
point $(x,t)$ on the trajectory of $\pi_{s_1}$, as a point of separation between $\pi_{s_1}$ and $\pi_{s_2}$, such that 
$$
D^{(x,t)}_{[\tau_1, \tau_2)} \geq D^{(x,t)}_{[s_1, s_2]} > \epsilon.
$$
In other words, in order to have (\ref{eq:SkeletalPath_1}), the set 
$D^\epsilon_{[\tau_1, \tau_2)}$ must be non-empty. We observe 
that for any $(1,2)$ type point $(y,s)$ to be in
the set $D^\epsilon_{[\tau_1, \tau_2)}$, the associated Poisson clock 
must ring during the interval $(\tau_1, \tau_2)$. Recall that the set 
$\Lambda^\epsilon_{[\tau_1, \tau_2)}$ 
represents Poisson clock rings associated to  points in the set 
$\Xi^\epsilon_{[\tau_1, \tau_2)} = D^\epsilon_{[\tau_1, \tau_2)}\cap 
A^{\tilde{\epsilon}}_{[\tau_1, \tau_2)}$,  
where $\tilde{\epsilon}$ is as in (\ref{eq:epsilon^tilde_choice}). 

Let us assume that $ \Lambda^\epsilon_{[\tau_1, \tau_2)}\cap (\tau_1, \tau_2) = \emptyset $.
By Remark \ref{rem:EtaSet} this event equivalently implies that the set 
$ \Xi^\epsilon_{[\tau_1, \tau_2)}$ is empty as well. 
Further, the condition  $ \Xi^\epsilon_{[\tau_1, \tau_2)} = \emptyset $
 implies that for any $(y,s) \in D^\epsilon_{[\tau_1, \tau_2)}$, we must 
have $A^{(y,s)}_{[\tau_1, \tau_2)} \leq \tilde{\epsilon}$.
We will show that under this assumption, we can't have (\ref{eq:SkeletalPath_1})
and henceforth we obtain a contradiction.

Choose $(y,s)$ to be a (random) point on the trajectory of $\pi_{s_1}$ 
such that $\tanh(s) - \tanh(\sigma_\pi) \in (\tilde{\epsilon}, 2\tilde{\epsilon})$
and $(y,s)$ is {\it not} a $(1,2)$ type point. Under our 
assumption  $ \Xi^\epsilon_{[\tau_1, \tau_2]} = \emptyset $, the trajectory of
$\pi_{s_1}$ restricted over time domain $[s, \infty)$ can not have a point 
in the set $D^\epsilon_{[\tau_1, \tau_2)}$. As $(y,s)$ is not a $(1,2)$ type point,
there exists a unique path  $\pi^\prime$ in $\WW(s_2)$ starting from the point $(y,s)$. 
Our assumption also ensures that there is no point in $D^\epsilon_{[\tau_1, \tau_2)}$
 which belongs to the trajectory of $\pi^\prime$. Finally, as we have $\tanh(\sigma_{\pi^\prime}) - \tanh(\sigma_{\pi}) < 2 \tilde{\epsilon}$, we obtain
$$
d_\Pi(\pi_{s_1}, \pi^\prime) \leq \epsilon.
$$
This completes the proof. 
\end{proof}

Now we are ready to complete the proof of Proposition \ref{prop:DyBWModCont}
and thereby proving that the DyBW is RCLL a.s. 

{\bf Proof of Proposition \ref{prop:DyBWModCont}:} Fix $\epsilon > 0$. 
We choose $\delta = \delta(\epsilon, \omega) > 0$ such that for any $s_1, s_2 \in 
\Lambda^\epsilon(\omega)$ we have 
$$
\min\{ s_1, (1-s_2), |s_1 - s_2| \} > 2\delta.
$$
With the above choice of $\delta$,  there exists a partition of the interval 
$[0,1]$ as $ 0 = s^\prime_0 < s^\prime_1 < \cdots < s^\prime_m = 1$ such that 
$s^\prime_{i+1} - s^\prime_{i} > \delta$ and 
$ \Lambda^\epsilon \cap (s^\prime_i, s^\prime_{i+1}) = \emptyset$ for all 
$ 0 \leq i \leq m - 1 $.
This readily implies that $ \Lambda^\epsilon_{[s^\prime_i, s^\prime_{i+1})} \cap 
(s^\prime_i, s^\prime_{i+1}) = \emptyset$ for all $\ 0 \leq i \leq m - 1 $. Hence,   
Lemma \ref{lem:PoissonClockSetIntersection} ensures that we have
\begin{align*}
\max\{\omega_{\WW}([s^\prime_i, s^\prime_{i+1})): 0 \leq i \leq m - 1\} 
\leq \epsilon.
\end{align*} 
This completes the proof.
\qed

\section{Finite dimensional distribution convergence of the DyDW to the DyBW}
\label{sec:FDD_DyDW}

In this section we describe finite dimensional convergence of the DyDW 
to the DyBW. As commented earlier, the main argument for the same was already done 
in \cite{NRS09A} and we have followed the same strategy.
We first start with describing the dynamic discrete web model. 
After that we describe finite dimensional distributions of the DyBW and 
in Section \ref{subsec:FDD} we present the required finite dimensional
convergence.   

\subsection{Dynamic discrete web (DyDW) model}

The discrete web (DW) is a system of coalescing simple symmetric one-dimensional random walks 
starting from everywhere on the space time even lattice 
$\Z^2_{\text{even}} := \{(x,t) \in \Z^2 : x + t \text{ even}\}$. 
We described in the beginning that we have an i.i.d. collection of 
Rademacher random variables $\{I_{(x,t)} : (x,t) \in \Z^2_{\text{even}}\}$ where
 $I_{(x,t)}$ gives increment of the walker at location $x$ at time $t$.
Following the walk, the walker at $(x,t)$ reaches $(x + I_{(x,t)}, t+1)\in \Z^2_{\text{even}}$
and this next step is denoted as $h(x,t)$. Set $h^0(x,t) = (x,t)$
and for $k \geq 1$, define $h^k(x,t) = h(h^{k-1}(x,t))$. Joining the successive steps 
$\langle h^{k-1}(x,t), h^{k}(x,t) \rangle$ we get the path $\pi^{(x,t)} \in \Pi$ starting from 
$(x,t)$. The collection of all paths obtained from the DW is denoted as 
${\cal X} := \{\pi^{(x,t)} : (x,t) \in \Z^2_{\text{even}}\}$. 
For $n \in \N$ and for any $(x,t) \in \R^2$, let $S_n$ denote the 
 $n$-th order diffusively scaled map given by 
$$
S_n(x,t) := (x/\sqrt{n}, t/n).
$$
With a slight abuse of notation for $A \subset \R^2$ we denote $S_n(A)$
as $S_n(A) := \{(x/\sqrt{n}, t/n) : (x,t) \in A\}$. 
For $\pi \in \Pi$ and for ${\cal K} \subset \Pi$, we define $S_n(\pi)$ and $S_n({\cal K})$
by identifying each path with it's graph as a subset of $\R^2$.
For any $n \geq 1$ closure of the set $S_n({\cal X})$ taken in $(\Pi, d_\Pi)$ metric is denoted as 
$\overline{S_n({\cal X})}$. This gives a ${\cal H}$ valued random variable.
Fontes et. al. \cite{FINR04} proved that as a sequence of ${\cal H}$ valued 
random variables, $\{\overline{S_n({\cal X})} : n \in \N\}$ converges in distribution to the Brownian web. 
\begin{theorem}[Theorem 5.1 of \cite{FINR04}]
\label{thm:DW_BW}
As $n \to \infty$, $\overline{S_n({\cal X})}$ converges in distribution to the Brownian web $\WW$.
\end{theorem}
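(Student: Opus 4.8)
The plan is to verify the convergence criteria of Fontes et.\ al.\ \cite{FINR04} (conditions (I), (T), (B$_1$) and (B$_2$)), which reduce the problem to three checks: (a) convergence of finitely many scaled coalescing walks to coalescing Brownian motions, (b) tightness of the family $\{\overline{S_n({\cal X})}\}$ in $({\cal H}, d_{\cal H})$, and (c) a pair of ``density'' conditions ensuring that any subsequential limit carries no more paths than the closure of a countable skeleton. Once these are in hand, the characterization in Theorem \ref{theorem:Bwebcharacterisation} forces every subsequential limit to coincide in law with $\WW$, giving convergence in distribution.

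First I would fix a countable dense deterministic set $\mathcal{D} \subset \R^2$ and, for each $\y \in \mathcal{D}$, select a path $\theta_n^{\y} \in \overline{S_n({\cal X})}$ starting near $\y$ (for instance the scaled discrete-web path from the nearest point of $\Z^2_{\text{even}}$). Condition (I) then amounts to showing that for deterministic $\y^1, \dots, \y^k$ the vector $(\theta_n^{\y^1}, \dots, \theta_n^{\y^k})$ converges in distribution to coalescing Brownian motions. For a single walk this is Donsker's invariance principle. For the joint law one uses that, until they first share a site, the discrete walks evolve independently, and upon meeting they coalesce; since independent scaled walks converge to independent Brownian motions and two independent one-dimensional Brownian motions meet almost surely, the meeting times and the common post-meeting trajectory pass to the coalescing-Brownian structure. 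The only care needed is the parity constraint on $\Z^2_{\text{even}}$ and the fact that ``first sharing a site'' is the discrete analogue of Brownian meeting.

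The technical heart is the tightness condition (T). Following \cite{FINR04}, I would control the counting random variable $\eta_n(t_0,t;a,b)$ equal to the number of distinct points of $\R \times \{t_0+t\}$ lying on some path of $\overline{S_n({\cal X})}$ that passed through $[a,b] \times \{t_0\}$. Using the self-duality of coalescing simple random walks (each forward path has a dual coalescing path), the expected number of distinct crossings admits a uniform-in-$n$ bound of Brownian-web order, namely $\E[\eta_n(t_0,t;a,b)] \le C(b-a)/\sqrt{t}$ for $t$ bounded away from $0$. This first-moment estimate yields the FINR tightness criterion and hence relative compactness of $\{\overline{S_n({\cal X})}\}$ in $({\cal H}, d_{\cal H})$. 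The same counting bound feeds the density conditions (B$_1$) and (B$_2$): it shows that the number of distinct paths crossing a small dual interval cannot proliferate in the limit, so no path survives beyond the closure of $\{\theta^{\y} : \y \in \mathcal{D}\}$.

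Finally, combining these ingredients: tightness extracts a subsequential limit $\mathcal{Z}$; condition (I) identifies the finite-dimensional laws of the skeletal paths of $\mathcal{Z}$ with coalescing Brownian motions; and the density conditions show $\mathcal{Z}$ is the closure of its skeleton. By the uniqueness asserted in Theorem \ref{theorem:Bwebcharacterisation}, $\mathcal{Z} \stackrel{d}{=} \WW$, and since the subsequence was arbitrary, $\overline{S_n({\cal X})} \Rightarrow \WW$. I expect the main obstacle to be the uniform first-moment bound on $\eta_n$ via duality, since it simultaneously underlies the tightness and the minimality of the limit; the finite-dimensional convergence, while requiring attention to coalescence-on-meeting at the discrete level, is comparatively routine once Donsker's theorem is invoked.
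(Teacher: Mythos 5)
Your proposal is correct, but note that the paper itself offers no proof of this statement: it is imported verbatim as Theorem 5.1 of \cite{FINR04}, so the only ``proof'' here is the citation. Your sketch faithfully reconstructs the argument of \cite{FINR04} itself --- verifying the convergence criteria (I), (T), (B$_1$), (B$_2$) via Donsker's theorem, coalescence-on-meeting of walks using shared arrows, and the duality/counting estimate on $\eta_n$ --- which is exactly the route taken in the cited source, so there is nothing to compare beyond that.
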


The DyDW is a dynamic evolution of the discrete web where 
 the arrow configuration evolves continuously over   dynamic time
 such that at each space-time point $(x,t) \in \Z^2_{\text{even}}$, 
the outgoing arrow $\langle (x,t), h(x,t) \rangle$ switches at unit rate independent 
of everything else. In order to define the DyDW we consider independent sequences of  
collections of i.i.d. random variables $\{I^m_{(x,t)} : (x,t) \in \Z^2\}_{m \geq 0}$.  
For each $(x,t) \in \Z^2$ we consider a Poisson point process $\{ N_{(x,t)}(\tau) : 
\tau \in [0,1]\}$ which is a Markov process with state space 
$\{0\} \cup \N$ evolving over dynamic time
$[0,1]$ with unit intensity.  
In other words, for any $\tau \in [0,1]$ we have 
$$
\P( N_{(x,t)}(\tau)  = \ell) = \frac{e^{-\tau} \tau^\ell}{\ell \times \cdots \times 1} \text{ for }\ell \geq 0.
$$
We further assume that these Markov processes,
 as $(x,t)$ varies over $\Z^2_{\text{even}}$, 
are mutually independent of each other. At  dynamic time
$\tau \in [0,1]$, for each $(x,t)$ we use the increment $I^{N_{(x,t)}(\tau)}_{(x,t)}$
and this gives us a distributionally equivalent 
copy of the DW. Altogether, this represents the DyDW as 
an ${\cal H}$-valued stationary Markov process evolving 
over dynamic time domain $[0,1]$.

Mathematically, for $(x,t) \in \Z^2_{\text{even}}$ and dynamic time $\tau$, 
next step at that instant is denoted as $h_\tau(x,t)$ and given as 
$$
h_\tau(x,t) := (x + I^{N_{(x,t)}(\tau)}_{(x,t)}, t+1) \in \Z^2_{\text{even}}. 
$$
$h^k_\tau(x,t)$, i.e., the $k$-th step at dynamic time instant $\tau$ is similarly defined. 
The path $\pi^{(x,t)}_\tau \in \Pi$ starting from $(x,t)$ at dynamic time $\tau$
is obtained by linearly joining successive steps 
$\langle h^{k-1}_\tau (x,t), h^{k}_\tau (x,t) \rangle$. The collection of all paths 
obtained from DyDW at dynamic time $\tau$ is denoted as 
${\cal X}(\tau) := \{\pi_\tau^{(x,t)} : (x,t) \in \Z^2_{\text{even}}\}$
and $\{ \overline{{\cal X}}(\tau) : \tau \in [0,1]\}$ represents the DyDW process defined over the
dynamic time domain $[0,1]$.

Fix  $n \in \N$ and now we are going to define the $n$-th diffusively 
scaled DyDW. Let $\{N^n_{(x,t)}(\tau) : \tau \in [0,1]\}_{(x,t) \in \Z^2}$ 
denote a collection of independent Poisson processes with intensity $1/\sqrt{n}$, i.e., 
$$
\P( N^n_{(x,t)}(\tau)  = \ell) = 
\frac{e^{-\tau/\sqrt{n}}(\tau/\sqrt{n})^{\ell}}{\ell \times \cdots \times 1} \text{ for }\ell \geq 0.
$$
Corresponding to these Markov processes, we define
$$
h_{\tau, n}(x,t) := (x + I^{N^n_{(x,t)}(\tau)}_{(x,t)}, t+1)
$$
 as the next step at dynamic time $\tau$. The path $\pi^{(x,t),n}_{\tau}$ at dynamic time $\tau$
is obtained by linearly joining successive steps 
$\langle h^{k-1}_{\tau,n} (x,t), h^{k}_{\tau,n} (x,t) \rangle$.
Let ${\cal X}^n(\tau) := \{\pi^{(x,t),n}_{\tau} : (x,t) \in \Z^2_{\text{even}}\}$.
Let ${\cal X}_n(\tau)$ denote the $n$-th scaled collection of paths $S_n({\cal X}^n(\tau))$
and the $n$-th scaled DyDW process is denoted by $\{\overline{{\cal X}_n}(\tau) : \tau \in [0,1]\}$.

In this paper we show that as the scaled DyDW as a process with RCLL paths converges to 
the DyBW. 
\begin{theorem}
\label{thm:DYDW_Conv}
As $n \to \infty$, the DyDW process 
$\{\overline{{\cal X}_n}(s) : s \in [0,1]\}$ converges to the DyBW
$\{ \WW(s) :s \in [0,1]\}$ where the convergence happens as 
${\cal D}^{{\cal H}}([0,1])$ valued random variables. 
\end{theorem}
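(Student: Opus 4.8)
The plan is to prove weak convergence in the Skorohod space ${\cal D}^{{\cal H}}([0,1])$ by the classical two-step recipe: convergence of finite-dimensional distributions together with tightness. By Theorem \ref{thm:DYBW_RCLL} the limit $\{\WW(s) : s \in [0,1]\}$ is a genuine ${\cal D}^{{\cal H}}([0,1])$-valued random variable, so the target space is well defined. The combination of finite-dimensional convergence (at a dense set of deterministic times) and tightness yields weak convergence by the standard criterion for the Skorohod topology on path space over a Polish state space (see \cite{B99}): tightness gives relative compactness via Prokhorov's theorem, while finite-dimensional convergence identifies every subsequential limit as the law of the DyBW, forcing convergence of the full sequence.

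For the finite-dimensional distributions I would invoke the convergence established in Section \ref{sec:FDD_DyDW}: for any deterministic $0 \le s_1 < \cdots < s_k \le 1$ the joint law of $(\overline{{\cal X}_n}(s_1), \ldots, \overline{{\cal X}_n}(s_k))$ converges to that of $(\WW(s_1), \ldots, \WW(s_k))$, following the strategy of \cite{NRS09A}. The point to verify for the Skorohod criterion is that these deterministic times are continuity points of the limit. Because the switching of the DyBW is driven by a Poissonian marking in dynamic time whose intensity has no atom at any fixed $s$, we have $\P(\WW \text{ is discontinuous at } s) = 0$ for each deterministic $s$. Hence every finite collection of deterministic times is almost surely a continuity set of $\WW$, and the finite-dimensional convergence of Section \ref{sec:FDD_DyDW} is precisely the input the Skorohod criterion requires.

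Tightness, carried out in Section \ref{sec:Tight}, has two ingredients. First, \emph{compact containment}: for each fixed $s$ the marginals $\overline{{\cal X}_n}(s)$ are tight in $({\cal H}, d_{{\cal H}})$, which follows from the convergence of the static scaled discrete web to the Brownian web (Theorem \ref{thm:DW_BW}) applied at each dynamic time. Second, and this is the essential estimate, one must bound the Skorohod modulus of continuity: for every $\epsilon > 0$,
\begin{equation*}
\lim_{\delta \downarrow 0} \limsup_{n \to \infty} \P\bigl( \omega_{\overline{{\cal X}_n}}(\delta) > \epsilon \bigr) = 0,
\end{equation*}
where $\omega$ denotes the ${\cal H}$-valued analogue of the modulus used in Lemma \ref{lem:RCLL_Char}. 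I would prove this by transcribing the excursion and separation-point analysis of Section \ref{sec:DyBW_RCLL} to the discrete web: within a dynamic window of length $\delta$, an ${\cal H}$-displacement exceeding $\epsilon$ forces a discrete switching event at a discrete analogue of a marked $(1,2)$ point that both carries a macroscopic excursion (discrete diameter $> \epsilon$) and survives a macroscopic time, mirroring the chain of Lemmas \ref{lem:PathAge}--\ref{lem:PoissonClockSetIntersection}. Controlling the number of such events uniformly in $n$ then bounds the modulus.

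The main obstacle will be this uniform-in-$n$ control of macroscopic-excursion switching events. In the continuum proof the finiteness of $\Xi^\epsilon$ rested on the local finiteness of the net separation sets $S^{T_2}_{T_1}$ (Proposition 7.9 of \cite{NRS10}); here one needs a uniform discrete counterpart, so that the number of discrete switches producing an ${\cal H}$-change larger than $\epsilon$ in a short dynamic interval is bounded in law independently of $n$ as $\delta \to 0$. Concretely, one must show that the expected number of such discrete events in a dynamic window of length $\delta$ is $o(1)$ uniformly in $n$, which is where the diffusive scaling of the spatial walk and the $1/\sqrt{n}$ switching rate must be balanced carefully. Once this estimate is in hand, the modulus bound follows as in Section \ref{sec:DyBW_RCLL}, tightness is complete, and the two ingredients combine to give the asserted ${\cal D}^{{\cal H}}([0,1])$ convergence.
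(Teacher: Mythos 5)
Your skeleton (finite-dimensional convergence plus tightness in ${\cal D}^{{\cal H}}([0,1])$, with the observation that deterministic dynamic times are a.s. continuity points of the limit) matches the paper, and your reduction of tightness to a modulus bound driven by macroscopic-excursion switching events is also how the paper proceeds (its Lemma \ref{lem:PoissonClockSetIntersection_1} is exactly the discrete analogue of Lemma \ref{lem:PoissonClockSetIntersection} that you describe). But your proposal stops at the decisive step: you state that one must show the number of discrete switching events producing an ${\cal H}$-displacement larger than $\epsilon$ in a dynamic window of length $\delta$ is controlled uniformly in $n$, and then you assume this estimate is ``in hand.'' That estimate \emph{is} the tightness proof; deferring it is a genuine gap, not a routine verification. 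Moreover, the route you sketch for it --- a direct moment bound balancing the diffusive scaling against the $1/\sqrt{n}$ switching rate --- is not carried out anywhere, and making it work would essentially require a uniform-in-$n$ discrete counterpart of the local finiteness of $(T_1,T_2)$-separation points (Proposition 7.9 of \cite{NRS10}), i.e., redoing the hardest part of the continuum theory at the lattice level with uniform error control.

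The paper fills this gap by a different mechanism that avoids any direct uniform estimate on the discrete objects. It first shows that any Brownian net on the same probability space containing all DyBW paths at rational dynamic times must coincide with the ``corresponding'' net (Corollary \ref{cor:Correspond_Net}, extended to vectors of nets over the sub-intervals $[j/m_0,(j+1)/m_0)$ in Corollary \ref{cor:Correspond_NetVector}). Combining this identification with the discrete-net convergence of \cite{SS08} and the finite-dimensional convergence of Proposition \ref{prop:FDD_Conv}, it obtains joint convergence in distribution of the vector of scaled discrete nets $(\NN_n,\NN_{0,n},\dots,\NN_{m_0-1,n})$ to the vector of corresponding Brownian nets (Proposition \ref{prop:JtConv_Nets}). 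After a Skorohod embedding this convergence is almost sure, and Lemma \ref{lem:Lambda_Lambda^n_set_comparison} shows that the discrete switching sets $\Lambda^{\epsilon/2}_{j,n}$ are non-empty for infinitely many $n$ precisely when the continuum set $\Lambda^{\epsilon/2}_{j}$ is non-empty. The a.s. finiteness of $\Lambda^{\epsilon/2}$, already established in Section \ref{sec:DyBW_RCLL} from the local finiteness of net separation points, together with stationarity, the i.i.d. structure across dynamic sub-intervals (Lemma \ref{lemma:iid_1}) and a strong Markov argument, then yields the required uniform-in-$n$ control (Lemma \ref{lem:Xi_n_Fluctuation_bd} and Proposition \ref{prop:RCLL_Tightness_1}). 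In short, the paper \emph{transfers} finiteness from the continuum limit to the discrete approximations along an a.s. convergent coupling, rather than estimating the discrete quantities directly; to complete your proposal you would either need to supply the direct estimate you postponed or adopt this transfer argument.
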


In the next section we show that 
the finite dimensional distributions of the DyDW process converge to that of the DyBW.
A detailed sketch of the argument for the same was earlier presented in \cite{NRS09A}.
But to the best of our knowledge, weak convergence of the DyDW process 
$\{\overline{{\cal X}_n}(s) : s \in [0,1]\}$  to the DyBW process has not been shown so far.
In Section \ref{sec:Tight} we prove tightness of the scaled DyDW
and thereby complete the proof of Theorem \ref{thm:DYDW_Conv}.  


\subsection{Finite dimensional distribution convergence}
\label{subsec:FDD}
In this section we prove the following proposition 
which proves finite dimensional distributional convergence for the scaled DyDW to the DyBW. 
\begin{proposition}
\label{prop:FDD_Conv}
Fix deterministic $0 \leq \tau_1 < \tau_2 < \cdots < \tau_k  \leq 1$. 
Then as $n \to \infty$, we have 
$$
(\overline{{\cal X}}_n(\tau_1), \cdots, \overline{{\cal X}}_n(\tau_k)) \Rightarrow
(\WW(\tau_1), \cdots, \WW(\tau_k)),
$$ 
as ${\cal H}^k$ valued random variables.
\end{proposition}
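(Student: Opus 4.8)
The plan is to follow the strategy of \cite{NRS09A}, reducing the ${\cal H}^k$-valued convergence to two ingredients --- joint tightness and convergence of the finite-dimensional distributions of the underlying coalescing path systems --- and then to identify the limit with the multi-time sticky Brownian web that defines the finite-dimensional law of the DyBW. First I would recall the characterisation framework of \cite{FINR04}. Since ${\cal H}^k$ with the product topology is again a complete separable metric space, and the limit $(\WW(\tau_1), \dots, \WW(\tau_k))$ is, by the coordinatewise analogue of property $(c)$ of Theorem \ref{theorem:Bwebcharacterisation}, determined by the joint law of paths started from a countable deterministic dense set ${\cal D} \subset \R^2$, it suffices to prove: (i) \emph{joint tightness}, that the sequence $(\overline{{\cal X}}_n(\tau_1), \dots, \overline{{\cal X}}_n(\tau_k))$ is tight in ${\cal H}^k$; and (ii) \emph{finite-dimensional convergence of paths}, that for any finite collection of deterministic points $\y^1, \dots, \y^p \in \R^2$ the joint law of the $pk$ scaled paths --- the path from the lattice approximation of each $\y^j$ in each of the $k$ webs --- converges to that of the corresponding system of paths under the DyBW finite-dimensional distribution.

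For (i), tightness in a product of Polish spaces follows from tightness of each marginal, and tightness of $\overline{{\cal X}}_n(\tau_i)$ for each fixed $i$ is exactly the tightness furnished by Theorem \ref{thm:DW_BW}, since each marginal is itself a diffusively scaled discrete web and the \cite{FINR04} argument applies verbatim. Thus tightness is immediate and is not the difficulty. Within each single web, the limiting properties $(a)$ and $(b)$ of Theorem \ref{theorem:Bwebcharacterisation} also hold by Theorem \ref{thm:DW_BW}; the genuinely new content lies entirely in the \emph{cross-web} joint law.

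The core is therefore (ii), and this is where the scaling $1/\sqrt{n}$ of the switching intensity enters. The $k$ webs are coupled through the shared Poisson clocks: the outgoing arrow at a site $(x,t)$ agrees at dynamic times $\tau_i$ and $\tau_{i'}$ unless the clock at $(x,t)$ rang in $(\tau_i, \tau_{i'}]$ and the resampled increment flipped, an event of probability $\tfrac12\bigl(1 - e^{-|\tau_{i'}-\tau_i|/\sqrt{n}}\bigr) \sim |\tau_{i'}-\tau_i|/(2\sqrt{n})$. Consequently, for two paths started from the same point in the webs at times $\tau_i < \tau_{i'}$, the difference process sits at $0$ --- the paths move together, since their arrows agree with high probability --- and escapes to $\pm 2$ only at a coincidence site where the arrows disagree, with per-step escape probability of order $|\tau_{i'}-\tau_i|/(2\sqrt{n})$; once apart it performs a lazy symmetric walk until the paths meet again. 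Under diffusive scaling the number of coincidence sites up to macroscopic time is of order $\sqrt{n}$ times a local time, and multiplying by the per-site escape probability yields a nontrivial sticky interaction in the limit. This is precisely the computation of \cite{HW09, NRS09A}: the limiting pair is a sticky pair of Brownian webs with stickiness $1/|2(\tau_{i'}-\tau_i)|$, matching the stated finite-dimensional law of the DyBW.

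For the full $pk$-path system I would combine the within-web coalescence, which is standard for the discrete web, with the pairwise cross-web sticky interactions just described, and identify the joint diffusive limit as the unique solution of the associated Howitt--Warren martingale problem; uniqueness of this consistent family is what pins the limit down as $(\WW(\tau_1),\dots,\WW(\tau_k))$. The main obstacle is exactly this step (ii): establishing that the discretely coupled interacting walk system converges to the continuum sticky system, i.e. controlling the local-time accumulation and the rare-switching escape mechanism uniformly enough to obtain convergence of the difference processes to sticky Brownian motions, and then verifying that the limiting martingale problem is well posed and is solved by the DyBW finite-dimensional distribution. As noted in the text, the detailed argument for this was carried out in \cite{NRS09A}, and we follow that strategy.
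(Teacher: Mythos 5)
Your proposal follows essentially the same route as the paper's proof: tightness of the ${\cal H}^k$-valued sequence is immediate from the marginal tightness of \cite{FINR04}, and identification of any subsequential limit reduces to showing that cross-web pairs of paths started from deterministic points form $1/(2|\tau_j-\tau_i|)$-sticky pairs, which both you and the paper derive from the rare-switching mechanism (per-coincidence-site switch probability of order $|\tau_j-\tau_i|/\sqrt{n}$, multiplied by a local-time number of coincidence sites of order $\sqrt{n}$), following \cite{NRS09A}. The only cosmetic difference is the uniqueness device used to pin down the limit law --- you invoke the Howitt--Warren martingale problem, whereas the paper uses its Remark \ref{rem:FDD_k_DyBW}, resting on stationarity and the Markov property from \cite{NRS10} --- but the substance and structure of the argument are the same.
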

In the next section, we describe finite dimensional distributions of the DyBW process. 
We prove Proposition \ref{prop:FDD_Conv} after that.

\subsubsection{Finite dimensional distributions of the DyBW}
\label{subsec:Sticky_BW}

We first recall the definition of a one-dimensional sticky (at the origin) Brownian motion.
\begin{definition} 
$B_{\text{stick},x}$ is a $(1/\tau)$-sticky Brownian motion starting at $x$ iff there exists a one-dimensional standard Brownian motion $B$ s.t.
\begin{equation}
\label{eq:Sticky_BM}
dB_{\text{stick},x}(t) = \mathbf{1}_{B_{\text{stick},x}(t) \neq 0}dB(t) +
\tau \mathbf{1}_{B_{\text{stick},x}(t) = 0}dt \text{ for all }t \geq 0,
\end{equation}
and $B$ is constrained to stay positive as soon as it hits zero.
\end{definition}
It is known that (\ref{eq:Sticky_BM}) has a unique (weak) solution. For $x = 0$
 this solution can be constructed from a
time-changed reflected Brownian motion as follows. Consider
\begin{equation}
\label{eq:Sticky_BM_1}
t \mapsto |\overline{B}|(C(t)) \text{ with } C^{-1}(t) = t + \frac{1}{\tau}L_0(t),
\end{equation}
where $|\overline{B}|$ is the reflected Brownian motion and $L_0(t)$ is its local time 
at the origin. Then there exists a Brownian motion $\overline{B}$ such that 
$(|\overline{B}|(C(\cdot)),B)$ is a solution of (\ref{eq:Sticky_BM}). 
The sticky Brownian motion is obtained from the reflected Brownian motion 
by ``transforming" the local time into real time and as a result, 
 it spends positive Lebesgue measure time at the origin. The larger the 
 ``degree of stickiness"  $1/\tau$ is, the more the path sticks to the origin. 

We now describe finite dimensional distributions of the DyBW. 
We first present the definition of sticky pair of Brownian motions 
starting from any two points in $\R^2$ taken from \cite{NRS10}.
\begin{definition} 
$(B,B^\prime)$ is a $(1/\tau)$-sticky pair of Brownian motions iff:
\begin{itemize}
\item[(i)] $B$ and $B^\prime$ are both Brownian motions starting at $(x_B, t_B)$
 and $(x_{B^\prime}, t_{B^\prime} )$ that move independently when they do not
coincide.
\item[(ii)] For $t \geq 0$, define $B_{\text{stick}}(t) := |B - B^\prime|(t + t_B \vee t_B^\prime )/\sqrt{2}$. Conditioned on $x = B_{\text{stick}}(0)$, 
the process $\{ B_{\text{stick}}(t) : t\geq 0 \} $ is a $(\sqrt{2}/\tau )$- sticky Brownian motion 
starting at $x$ (see Definition \ref{eq:Sticky_BM}).
\end{itemize}
\end{definition}
We call $(B_1, \cdots , B_m ; B^\prime_1 , \cdots , B^\prime_n)$ a collection of $(1/\tau )$-sticking-coalescing Brownian motions, if $(B_1, \cdots , B_m)$ and $(B^\prime_1 , \cdots , B^\prime_n)$ are each distributed as a set of coalescing Brownian motions and for any $B \in \{ B_1, \cdots , B_m \}$ and
$B^\prime \in \{ B^\prime_1 , \cdots , B^\prime_n \}$, the pair $(B,B^\prime )$ is a $(1/\tau )$-sticky pair of Brownian motions.

We will say that $(\WW,\WW^\prime )$ is a $1/\tau$-sticky pair of Brownian 
webs if $(\WW,\WW^\prime)$ satisfies the following properties:
\begin{itemize}
\item[(a)] $\WW$, resp. $\WW^\prime$, is distributed as the standard Brownian web.
\item[(b)] For any finite deterministic set $\x_1, \cdots , \x_m, \x^\prime_1, \cdots , 
\x^\prime_n  \in \R^2$, the subset of paths in $\WW$ starting from these points
 are jointly distributed as a collection of $(1/\tau )$-sticking-
coalescing Brownian motions starting from the given sets of points.
\end{itemize}
Newman et. al. \cite{NRS10} presented a rigorous construction of 
the process $\{\WW(\tau) : \tau \in [0,1]\}$ such that for any $0 \leq \tau_1 < \tau_2 \leq 1$ 
the pair $(\WW(\tau_1), \WW(\tau_2))$ equidistributed as $(\WW(0), \WW(\tau_2 - \tau_1))$
which has the same distribution as $1/(2(\tau_2 - \tau_1))$-sticky pair of Brownian webs.
Our next remark explains that the distribution of $(\WW(\tau_1), \WW(\tau_2))$
uniquely specifies finite dimensional distributions of the DyBW process. 
Remark \ref{rem:FDD_k_DyBW} essentially follows from stationarity and 
Markov property of theorem 6.2 of \cite{NRS10}.
\begin{remark}
\label{rem:FDD_k_DyBW}
Set $k \geq 1$ and fix $0\leq \tau_1 < \cdots < \tau_k \leq 1$. Let $({\cal Z}_1, \cdots , 
{\cal Z}_k)$ be such that  for all $ 1 \leq i \leq k $, we have
\begin{equation}
\label{eq:Dist_W}
{\cal Z}_i \stackrel{d}{=} \WW,
\end{equation}
where the notation $\stackrel{d}{=}$ stands for same distribution.
 Further, for any $k$  many deterministic points $(x_1, t_1), 
\cdots , (x_k, t_k) \in \R^2$, let $\pi^i$ denote the a.s. unique path in ${\cal Z}_i$
starting from $(x_i, t_i)$. Equation (\ref{eq:Dist_W}) guarantees existence 
and uniqueness of such $\pi^i$. Suppose distribution  
$(\pi^1, \cdots , \pi^k) \in \Pi^k$ satisfies the following:
\begin{itemize}
\item[(a)] For each $1 \leq i \leq k$, marginally  the random path $\pi_i$
is distributed as the standard Brownian motion starting from $(x_i, t_i)$.  
\item[(b)] As long as the paths are disjoint, they evolve like independent Brownian motions.  
\item[(c)] As soon as any two paths intersect, due to sticky interaction they 
spend non-trivial time together. Precisely, if paths $\pi^i$ and $\pi^j$ intersect, 
 together they evolve like a $1/(2|\tau_i - \tau_j|)$-sticky Brownian motion.    
\end{itemize}
Then we must have 
$$
({\cal Z}_1, \cdots , {\cal Z}_k) \stackrel{d}{=}
(\WW(\tau_1), \cdots , \WW(\tau_k)),  
$$
where $\WW(\tau)$ denotes the DyBW at dynamic time $\tau$.
\end{remark}  

In the next section we use Remark \ref{rem:FDD_k_DyBW} to 
prove Proposition \ref{prop:FDD_Conv}.

\subsubsection{Proof or Proposition \ref{prop:FDD_Conv}}

\noindent {\bf Proof of Proposition \ref{prop:FDD_Conv}:}
Note that the sequence $\bigl\{ \bigl ( \overline{{\cal X}}_n(\tau_1), 
\cdots , \overline{{\cal X}}_n(\tau_k) \bigr ) : n \in \N\bigr \}$
is a tight sequence of ${\cal H}^k$-valued random variables. 
Consider any sub-sequential limit $({\cal Z}_1,\cdots, {\cal Z}_k )$ of the above sequence. 
The work of Fontes et al. \cite{FINR04} ensures that
$$
{\cal Z}_i \stackrel{d}{=} \WW \text{ for all }1 \leq i \leq k.
$$  
Because of Remark \ref{rem:FDD_k_DyBW}, in order to prove Proposition \ref{prop:FDD_Conv}
it suffices to show that $({\cal Z}_1,\cdots, {\cal Z}_k )$ satisfies conditions (b) and (c) as well. 
Condition (a) follows from the fact that, as long as the scaled discrete web paths are disjoint, 
they evolve independently. 

To show condition (a) we fix $1 \leq i, j \leq k$. 
Set $(x_i,t_i) = (x_j,t_j) = (0,0)$, i.e., $i$-th and $j$-th path both start 
from origin. The DW paths evolve independently as long as they are supported on disjoint sets of lattice 
points and hence, taking two different starting points does not pose any additional challenge.   
For each $n \geq 1$, the collection $\{ N^n_{(x,t)}(\tau) : \tau \in [0,1]\}_{(x,t) 
\in \Z^2_{\text{even}}}$ denotes a collection of i.i.d. Poisson processes with 
intensity $1/\sqrt{n}$ and these processes are taken to be independent. 
Corresponding to this, $\pi^{\mathbf{0},n}_{\tau_i}$ and $\pi^{\mathbf{0},n}_{\tau_j}$
denote the unscaled DW  paths starting from the origin at dynamic times $\tau_i$ and $\tau_j$ 
respectively. We comment here that the marginal distribution of $\pi^{\mathbf{0},n}_{\tau_i}$ does 
not depend on $n$ but the joint distribution of $(\pi^{\mathbf{0},n}_{\tau_i}, \pi^{\mathbf{0},n}_{\tau_j})$
depends on $n$. To simplify our notation, we denote these two paths simply as 
$S^{i,n}$ and $S^{j,n}$ respectively. W.l.o.g. we assume that $\tau_i < \tau_j$.
Corresponding pair of $n$-th order diffusivelly scaled paths are given by 
$$
(S^i_n(t), S^j_n(t))_{t \geq 0}
:= (S^{i,n}(nt)/\sqrt{n}, S^{j,n}(nt)/\sqrt{n})_{t \geq 0}.
$$

For this part of the proof we follow the ideas in \cite{NRS09} and in \cite{NRS09A}.
We observe that the pair of `scaled' DW paths $S^i_n$ and $S^j_n$ alternates between 
times at which these two paths are equal (i.e., they “stick together”) and times at 
which they move independently. As soon as the paths $S^i_n$ and $S^j_n$ meet at 
time $t = k/n$ for some $k \in \N$, they continue to coincide and move together as long as the clock at 
$(\sqrt{n}S^{i}_{n}(k^\prime / n),k^\prime) = (\sqrt{n}S^{j}_{n}(k^\prime/n),k^\prime)$ 
for some $k^\prime \in \{ k+1, k+2, \cdots \}$ 
does not ring during the interval $(\tau_i, \tau_j]$. After such a clock ticks, 
the two random walk paths are independent till the time they meet again.
This suggests the following time decomposition.  Set $T^n_0=0$ and for $k \geq 0$ we define
\begin{align*}
T^n_{2k+ 1} & := \inf\{ m \geq T^n_{2k}, m \in \N : \text{ the Poisson clock } 
N^n_{(\sqrt{n}S^{i}_{n}(m/n), m)} \text{ ticks in }(\tau_i, \tau_j] \}\text{ and }\\
T^n_{2k+ 2} & := \inf\{ m > T^n_{2k + 1}, m \in \N : S^{i}_{n}(m/n) = S^{j}_{n}(m/n) \}.
\end{align*}
As discussed before, on the interval $[T^n_{2k}/n, T^n_{2k+1}/n]$, both the scaled paths 
$S^{i}_{n}$ and $S^{j}_{n}$ coincide and at time $T^n_{2k+1}/n$ they start moving 
independently until meeting at $T^n_{2k+2}/n$.
In other words, if we skip the intervals
$[T^n_{2k}/n, T^n_{2k+1}/n)_{k\geq 0}$, the pair $(S^{i}_{n}, S^{j}_{n})$ behave 
as two independent diffusively scaled random walks $(\tilde{S}^i_{n},  \tilde{S}^j_{n})$, while if we skip intervals
$[T^n_{2k+1}/n, T^n_{2k+2}/n]_{k\geq 0}$, the two walks coincide to form a single diffusively scaled 
random walk $\tilde{S}^s_n$ which  evolves independent of $(\tilde{S}^i_{n}, \tilde{S}^j_{n})$.
We define $\Delta T^n_k := T^n_{2k + 1} - T^n_{2k}$ and we have 
$$
\P(\Delta T^n_k \geq \ell )\leq \exp{(-\ell\frac{|\tau_j - \tau_i|}{\sqrt{n}})}.
$$ 
Further, the collection $\{ \Delta T^n_k : k \geq 0\}$ is independent 
of the rescaled random walk paths.
This analysis, which  is a scaled version of Lemma 3.2 of \cite{NRS09}, is summarised as follows:

Distributionally this scaled pair is equivalent to 
\begin{align*}
S^i_n(t) & = \tilde{S}^i_n(C_n(t)) + \tilde{S}^s_n(t - C_n(t))\text{ and }\\ 
S^j_n(t) & = \tilde{S}^j_n(C_n(t)) + \tilde{S}^s_n(t - C_n(t)),
\end{align*} 
where $\tilde{S}^i_n, \tilde{S}^j_n, \tilde{S}^s_n$ are three independent 
rescaled random walks, and $C_n$ is the right continuous inverse of 
$\hat{L}_n(t) + t$ with,
\begin{itemize}
\item[(i)] $\hat{L}^n(t) := \frac{1}{n}\sum_{k=1}^{\lfloor \hat{l}^n(t) \sqrt{n} \rfloor} T^n_k$;
\item[(ii)] $\hat{l}^n(t) := \frac{1}{\sqrt{n}}\#\{ k \leq nt : \tilde{S}^i_n(k/n) = 
\tilde{S}^j_n(k/n)\}$,   
\end{itemize}
where  $T^n_k$'s are i.i.d. non-negative random variables with 
$\P (T^n_k \geq \ell) = \exp{(-\ell \frac{|\tau_i - \tau_j|}{\sqrt{n}})}$.
 
Then we have 
$$
\E(T^n_k) = \frac{\exp{(-\frac{|\tau_i - \tau_j|}{\sqrt{n}})}}{1 - 
\exp{(-\frac{|\tau_i - \tau_j|}{\sqrt{n}})}}\text{ and }Var(T^n_k) = 
\frac{\exp{(-\frac{|\tau_i - \tau_j|}{\sqrt{n}})}}{\bigl (1 - 
\exp{(-\frac{|\tau_i - \tau_j|}{\sqrt{n}})} \bigr )^2}.
$$
We know that $\hat{l}^n(t)$ converges in distribution to $L(t)$ where $L(t)$ is 
the local time at zero of $(B_1 - B_2)$ where $B_1$ and $B_2$ are
two independent Brownian motions (see Theorem 1.1 of \cite{B82} and Theorem 4.1 of \cite{K63}).

By Skorohod's representation theorem 
we assume that we are working on a probability space 
$(\Omega_l, {\cal B}_l, \P_l)$ such that $\hat{l}^n(t)$
converges almost surely to $L(t)$. 
Note that the scaled independent random walks $\{ (\bar{S}^i_n(s), \bar{S}^j_n(s), \bar{S}^s_n(s) )
: s \geq 0\}$ ${\cal B}_l$ are measurable. Let us denote the probability space 
for the i.i.d. collections $\{ T^n_k : k \geq 1\}_{n \geq 1}$ as 
$(\Omega_d, {\cal B}_d, \P_d)$ and let $\P = \P_l \times \P_d$. 
Then we have 
\begin{align*}
Var(\hat{L}^n(t) \mid {\cal B}_l) & = Var \bigl (  \frac{1}{n}\sum_{k=1}^{\lfloor \hat{l}^n_t \sqrt{n}\rfloor}T^n_k \bigr )\\
& = \frac{1}{n^2}\sum_{k=1}^{\lfloor \hat{l}^n_t\sqrt{n}\rfloor}Var(T^n_k)\\
& = \frac{1}{n^2} \lfloor \hat{l}^n_t \sqrt{n}\rfloor 
\frac{\exp{(-\frac{|\tau_i - \tau_j|}{\sqrt{n}})}}{\bigl (1 - 
\exp{(-\frac{|\tau_i - \tau_j|}{\sqrt{n}})} \bigr )^2} \\
& \equiv O(1/\sqrt{n}).
\end{align*}
Therefore we have $\P_l$ a.s. $Var(\hat{L}^n(t) \mid {\cal B}_l) \to 0$ as $n\to \infty$.
Thus we have $ |\hat{L}^n(t) - \E(\hat{L}^n(t)\mid {\cal B}_l)|$ 
converges to $0$  in Probability.
 Since,
$$
\lim_{n \to \infty} \E(\hat{L}^n(t) \mid {\cal B}_l) = 
\lim_{n \to \infty} \frac{\lfloor \hat{l}^n_t
\sqrt{n}\rfloor}{n(|\tau_i - \tau_j|/\sqrt{n})} = 
 \frac{ \lim_{n \to \infty}\hat{l}^n_t}{|\tau_i - \tau_j|} =
 \frac{ {L}_t }{|\tau_i - \tau_j|}, \P_l \text{ a.s.} 
$$
Therefore, we have $\hat{L}^n(t)$ converges to $L(t)/|\tau_i - \tau_j|$ in probability
which implies convergence in distribution for local times. 
This completes the proof for Proposition \ref{prop:FDD_Conv}.

\section{Tightness part}
\label{sec:Tight}

In this section we finally prove that the scaled DyDW processes 
$\{\overline{{\cal X}}_n(\tau) : \tau \in [0,1]\}$ form a tight sequence 
as ${\cal D}^{{\cal H}}([0,1])$ valued random variables and thereby completes 
the proof of Theorem \ref{thm:DYDW_Conv}.

\subsubsection{Description of notations}

Recall that the $n$-th scaled discrete DyDW process is denoted  as 
$\{\overline{{\cal X}}_n(\tau) : \tau \in [0,1]\}$, a ${\cal H}$-valued stochastic process
evolving over the dynamic time interval $[0,1]$. 
 
 Fix $n \in \N$. We consider the scaled DyDW process $\{\overline{{\cal X}}_n(\tau) : 
\tau \in [0,1]\}$. We denote the random configuration of {\it all} outgoing edges
observed over the dynamic time interval $[0,1]$ as
\begin{align*}
{\cal A}_n & := \{ \langle (x,t), h_{\tau, n}(x,t)\rangle : (x,t) \in \Z^2_{\text{even}}, 
\tau \in [0,1]\}\\
& = \{ \langle (x,t), (x + 1,t+1) \rangle : (x,t) \in \Z^2_{\text{even}}, 
h_{\tau, n}(x,t) = (x+1, t+1) \text{ for some }\tau \in [0,1]\} \\
& \bigcup \{ \langle (x,t), (x - 1,t+1) \rangle : (x,t) \in \Z^2_{\text{even}}, 
h_{\tau, n}(x,t) = (x - 1, t+1) \text{ for some }\tau \in [0,1]\}.
\end{align*}
The random quantity ${\cal A}_n$ basically denotes the random 
configuration of all `arrow's observed over dynamic time domain $[0,1]$
as described in \cite{SS08}. An ${\cal A}_n$-path, is the graph of a
function $\pi : [\sigma_\pi ,\infty] \mapsto \R \cup \{ \star \}$, 
with $\sigma_\pi \in \Z \cup \{ \pm 
\infty \}$, such that $\langle (\pi(t ), t ), (\pi(t +1), t + 1)\rangle \in {\cal A}_n$ and $\pi$ is linear on the interval $[t, t + 1]$ for all $t \in [\sigma_\pi ,\infty] \cap \Z$, while $\pi(\pm \infty) = \times$ whenever 
$\pm \infty \in [\sigma_\pi, \infty]$. 

The closure of collection of all ${\cal A}_n$-paths, i.e., paths along the arrows in ${\cal A}_n$,
is denoted as $\NN_n$. We observe that $\NN_n$ denotes the $n$-th scaled discrete net 
considered in \cite{SS08} and is a ${\cal H}$-valued random variable.
   
More generally, for any  dynamic time interval
$I \subseteq [0,1]$ collection of all outgoing edges
observed over $I$ is denoted as 
$$
{\cal A}_{I,n} :=  \{ \langle (x,t), h_{\tau, n}(x,t)\rangle : 
(x,t) \in \Z^2_{\text{even}}, \tau \in I\}
$$
 and the closure of collection of all 
 ${\cal A}_{I,n}$-paths is denoted as $\NN_{I,n}$.
 

Sun et. al showed that the discrete net $\NN_n$ converges 
in distribution to the standard Brownian net (Theorem 1.1 of \cite{SS08}). 
Their argument also proves that for any deterministic $0 \leq \tau_1 < \tau_2 \leq 1$ we have     
$$
\NN_{[\tau_1, \tau_2],n} \Rightarrow \NN_{[\tau_1, \tau_2]} \text{ as }n \to \infty.  
$$
Fix $\epsilon > 0$. For $n \in \N$ and for any interval $I 
\subseteq [0, 1]$ considering ${\cal H}$-valued random variable 
$\NN_{I, n}$ we define the 
following quantities:
\begin{align*}
{\cal S}_{I, n} & := {\cal S}(\NN_{I, n}),\\
D^\epsilon_{I, n} & := D^\epsilon(\NN_{I, n}),\\
T^\epsilon_{I, n} & := T^\epsilon(\NN_{I, n}) \text{ and }\\
A^\epsilon_{I, n} & := A^\epsilon(\NN_{I, n}).
\end{align*}
For $(x,t) \in \Z^2_{\text{even}}$ a scaled lattice point $(x,t)_n := S_n(x,t)$
may belong to the set ${\cal S}_{I, n}$ only if 
the associated Markov process  $\{ N^n_{(x,t)}(\tau) : \tau \in [0,1]\}$ ticks
at least once during the dynamic time interval $I$.
In fact, for $I = [\tau_1, \tau_2)$, to be in the set ${\cal S}_{[\tau_1, 
\tau_2), n}$, the associated Markov process must tick
at least once during the interval $(\tau_1, \tau_2)$.  
We define the (random) set $\Xi^\epsilon_{I, n}$ as 
$$
\Xi^\epsilon_{I, n} := D^\epsilon_{I, n}
\cap A^{\tilde{\epsilon}}_{I, n}, 
$$
where $\tilde{\epsilon}$ is defined as in (\ref{eq:epsilon^tilde_choice}).
The notation $\Lambda^\epsilon_{I, n}$ denotes the random set of 
Poisson clock rings attached to points in the set $\Xi^\epsilon_{I, n}$
 over dynamic time  interval $I$. For simplicity of notation we set 
\begin{align*}
\Xi^\epsilon_{[0,1],n} & := \Xi^\epsilon_{n} \text{ and } \\
\Lambda^\epsilon_{[0,1],n} & := \Lambda^\epsilon_{n}.
\end{align*}
As observed earlier, for all $I \subseteq [0, 1]$ and any $n \in \N $ we have 
$\Xi^\epsilon_{I, n} \subseteq \Xi^\epsilon_{ n}$
and $\Lambda^\epsilon_{I, n} \subseteq \Lambda^\epsilon_{n}$ a.s.  

\subsubsection{Proof of tightness }

First we need to show that the scaled DyDW process 
$\{\overline{{\cal X}}_n(\tau) : \tau \in [0,1]\}$ has RCLL ${\cal H}$-valued paths a.s.
We will prove a weaker version that for all large $n$, the process
$\{\overline{{\cal X}}_n(\tau) : \tau \in [0,1]\}$ has RCLL paths a.s. 
For $\delta > 0$ with a slight abuse of notation let $\omega_{\overline{{\cal X}}_n}(\delta)$
denote  the (generalised) modulus  of continuity 
for the DyDW process  $\{\overline{{\cal X}}_n(\tau) : \tau \in [0,1]\}$. 
We first show that the same argument of Lemma \ref{lem:PoissonClockSetIntersection}
holds for the scaled DyDW process as well and gives the following lemma.

\begin{lemma}
\label{lem:PoissonClockSetIntersection_1}
Fix $\epsilon > 0$ and $0 \leq \tau_1 < \tau_2 \leq 1$. For all large $n$
on the event  
$\Lambda^\epsilon_{[\tau_1, \tau_2), n} \cap (\tau_1, \tau_2) = \emptyset $ we must 
have $\omega_{\overline{{\cal X}}_n}([\tau_1, \tau_2)) \leq \epsilon$.
\end{lemma}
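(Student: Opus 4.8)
The plan is to mirror the continuum argument of Lemma \ref{lem:PoissonClockSetIntersection}, transporting each step to the discrete net $\NN_{[\tau_1,\tau_2),n}$. First I would argue contrapositively: assume $\omega_{\overline{{\cal X}}_n}([\tau_1,\tau_2)) > \epsilon$ and produce a clock ring in $\Lambda^\epsilon_{[\tau_1,\tau_2),n}\cap(\tau_1,\tau_2)$. By the definition of the generalized modulus of continuity for the $\cal H$-valued process, exceeding $\epsilon$ forces dynamic times $s_1 < s_2$ in $[\tau_1,\tau_2)$ and a scaled discrete-web path $\pi^n$ starting from a scaled lattice point such that $d_{\cal H}(\{\pi^n_{s_1}\}, \overline{{\cal X}}_n(s_2)) > \epsilon$, which in turn forces $d_\Pi(\pi^n_{s_1}, \pi^n_{s_2}) > \epsilon$ for the trajectory of that same discrete path at the two dynamic times. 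The starting point of a discrete path does not change with dynamic time (only its arrow configuration does), so $\sigma_{\pi^n_{s_1}} = \sigma_{\pi^n_{s_2}}$, exactly paralleling the skeletal-path observation in the continuum proof.

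Next I would locate a separation point of the discrete net responsible for this discrepancy. Since $\pi^n_{s_1}$ and $\pi^n_{s_2}$ share a starting point but diverge in $d_\Pi$ by more than $\epsilon$, there must be a scaled lattice point $(y,s)_n$ on the common trajectory where the outgoing arrow switched between dynamic times $s_1$ and $s_2$, and from which the two versions separate and stay apart long enough that $d_\Pi$ along the separated portion exceeds $\epsilon$. This places $(y,s)_n$ in $D^\epsilon_{[\tau_1,\tau_2),n}$, and the associated Markov clock $\{N^n_{(y,s)}(\tau)\}$ must tick in $(\tau_1,\tau_2)$ by the observation already recorded in the notation section. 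The remaining task is to show such a point lies in $\Xi^\epsilon_{[\tau_1,\tau_2),n} = D^\epsilon_{[\tau_1,\tau_2),n} \cap A^{\tilde\epsilon}_{[\tau_1,\tau_2),n}$, i.e.\ that it also has age exceeding $\tilde\epsilon$; this is where the discrete analogue of Lemma \ref{lem:PathAge} feeds in. I would run the same excursion-width-forces-survival-time reasoning, now for paths of $\NN_{[\tau_1,\tau_2),n}$, to guarantee that a point with diameter $> \epsilon$ also has an incoming path of sufficient age (using that the scaled discrete path already travels before reaching $(y,s)_n$), so that the responsible separation point indeed lands in $\Xi^\epsilon_{[\tau_1,\tau_2),n}$ and its ring lands in $\Lambda^\epsilon_{[\tau_1,\tau_2),n}\cap(\tau_1,\tau_2)$, contradicting the hypothesized emptiness.

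The main obstacle, and the reason for the ``for all large $n$'' qualifier, is that the clean continuum facts I relied on — the equicontinuity of net paths used to define $\tilde\epsilon$ in (\ref{eq:epsilon^tilde_choice}), and the impossibility of a $(1,2)$ point lying on a deterministic horizontal line — hold only in the limit. At finite $n$ the discrete net paths are piecewise linear with lattice-spacing jumps of order $1/\sqrt n$, so the estimate converting excursion width into a quantitative lower bound on survival time (the discrete Lemma \ref{lem:PathAge}) carries an $O(1/\sqrt n)$ error, and the threshold $\tilde\epsilon$ is only eventually a valid modulus bound for the discrete family. I would therefore use the convergence $\NN_{[\tau_1,\tau_2),n} \Rightarrow \NN_{[\tau_1,\tau_2)}$ from \cite{SS08}, together with Skorohod coupling, to secure that for all $n$ large enough the discrete net paths satisfy the same equicontinuity-type modulus bound (with $\epsilon/3$ replaced by, say, $\epsilon/2$) that underlies the definition of $\tilde\epsilon$, so that the width-to-survival-time implication and hence the separation-point containment go through uniformly. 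Once that is in place, the rest of the argument is a verbatim discrete transcription of the proof of Lemma \ref{lem:PoissonClockSetIntersection}.
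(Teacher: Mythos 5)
Your reduction works up to the point where a switched lattice point in $D^\epsilon_{[\tau_1,\tau_2),n}$ with a clock ring in $(\tau_1,\tau_2)$ is produced; that part matches the paper. The genuine gap is the next step, where you claim that this point automatically lies in $\Xi^\epsilon_{[\tau_1,\tau_2),n} = D^\epsilon_{[\tau_1,\tau_2),n} \cap A^{\tilde\epsilon}_{[\tau_1,\tau_2),n}$ because the ``excursion-width-forces-survival-time reasoning'' of Lemma \ref{lem:PathAge} guarantees an incoming path of sufficient age. Lemma \ref{lem:PathAge} gives $D^\epsilon \subseteq T^{\tilde\epsilon}$: a wide excursion forces the two outgoing paths to stay apart for a long time \emph{after} the separation point. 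It says nothing about the \emph{age} $A^{(x,t)}$, which measures how long an incoming path has existed \emph{before} the separation point, and no such implication holds: the switched point responsible for $d_\Pi(\pi^n_{s_1},\pi^n_{s_2})>\epsilon$ can sit one lattice step after the start of the path (age of order $1/n$ after scaling), in which case it belongs to $D^\epsilon_{[\tau_1,\tau_2),n}$ but not to $A^{\tilde\epsilon}_{[\tau_1,\tau_2),n}$, no point of $\Xi^\epsilon_{[\tau_1,\tau_2),n}$ is exhibited, and your contrapositive produces no element of $\Lambda^\epsilon_{[\tau_1,\tau_2),n}$. Indeed, if $D^\epsilon \subseteq A^{\tilde\epsilon}$ were true, the intersection defining $\Xi^\epsilon$ would be redundant throughout the paper.

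This is also not how the continuum proof you set out to transcribe actually runs. Lemma \ref{lem:PoissonClockSetIntersection} argues by contradiction: assuming $\Xi^\epsilon_{[\tau_1,\tau_2)}=\emptyset$, every point of $D^\epsilon_{[\tau_1,\tau_2)}$ has age $\leq \tilde\epsilon$; one then picks a point $(y,s)$ on the trajectory of $\pi_{s_1}$ at age in $(\tilde\epsilon,2\tilde\epsilon)$ and compares $\pi_{s_1}$ with the path $\pi^\prime$ started from $(y,s)$ at dynamic time $s_2$. Any point where these two paths could separate inherits age $>\tilde\epsilon$ from the incoming path $\pi_{s_1}$, hence by the assumed emptiness of $\Xi^\epsilon$ cannot lie in $D^\epsilon_{[\tau_1,\tau_2)}$; this bounds $d_\Pi(\pi_{s_1},\pi^\prime)$ by $\epsilon$ and contradicts the Hausdorff distance being $>\epsilon$. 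Note that the contradiction is obtained by exhibiting \emph{one} path of $\WW(s_2)$ close to $\pi_{s_1}$ --- not by analyzing the separation between $\pi_{s_1}$ and $\pi_{s_2}$ alone, which is all your argument uses. The paper's discrete proof is a verbatim transcription of this indirect argument, and the only place ``for all large $n$'' enters is the need for the scaled lattice to be fine enough that a lattice point $(y^n,s^n)$ at age in $(\tilde\epsilon,2\tilde\epsilon)$ exists on the trajectory; the Skorohod-coupling and equicontinuity-transfer apparatus you invoke is not what is needed. Your proposal would be repaired by replacing the false containment with this replacement-path argument.
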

\begin{proof}
The argument is essentially same as that of  Lemma \ref{lem:PoissonClockSetIntersection} and 
we only give a sketch of the proof here. 
As argued earlier, in order to have $\omega_{\overline{{\cal X}}_n}([\tau_1 , \tau_2)) > \epsilon$ there must be a scaled path $\pi^n_{s_1} \in {\cal X}_n(s_1)$ for some 
$s_1 \in [\tau_1 , \tau_2)$ with a scaled separating (branching) point $(x,t)$ 
on it's trajectory such that $(x,t) \in D^\epsilon_{[\tau_1, \tau_2),n}$. 

We choose a scaled lattice point $(y^n,s^n) \in S_n (\Z^2_{\text{even}})$
on the trajectory of the path $\pi^n_{s_1}$ and above $t$ such that the difference 
between $s^n$ and starting time of $\pi^n_{s_1}$ in $\tanh(\cdot)$ metric lies in 
$(\tilde{\epsilon}, 2\tilde{\epsilon})$. For $n$ large enough, we can always make such
a choice and only for this part of the argument we require $n$ to be large. Under the assumption 
$\Lambda^\epsilon_{[\tau_1 , \tau_2), n}\cap (\tau_1 , \tau_2) = \emptyset $ we have 
$\Xi^\epsilon_{[\tau_1 , \tau_2), n} = \emptyset $ as well 
and consequently  it follows that for all $s \in [\tau_1, 
\tau_2)$, the scaled path starting from $(y^n,s^n)$ at dynamic time $s$ does not have 
a member of $D^\epsilon_{[\tau_1 , \tau_2), n}$ on it's trajectory. 
Rest of the argument is exactly same as Lemma \ref{lem:PoissonClockSetIntersection}.      
\end{proof}

\begin{lemma}
\label{lem:DyDW_RCLL}
For each $n$, the scaled DyDW process $\{{\cal X}_n(\tau) : \tau \in [0,1]\}$
is RCLL a.s.
\end{lemma}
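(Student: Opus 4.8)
The plan is to establish RCLL-ness of the scaled DyDW process by mimicking exactly the strategy used for the continuum DyBW in the proof of Proposition \ref{prop:DyBWModCont}, now that Lemma \ref{lem:PoissonClockSetIntersection_1} supplies the discrete analogue of Lemma \ref{lem:PoissonClockSetIntersection}. By the discrete version of the RCLL characterisation (Lemma \ref{lem:RCLL_Char} applied to the $\mathcal{H}$-valued path $\tau \mapsto \overline{{\cal X}}_n(\tau)$), it suffices to show that for each fixed $\epsilon > 0$ we have $\lim_{\delta \downarrow 0} \omega_{\overline{{\cal X}}_n}(\delta) < \epsilon$ a.s., which will follow once we exhibit a partition of $[0,1]$ whose mesh exceeds some $\delta$ and on each of whose blocks the oscillation is at most $\epsilon$.

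The key point is that the set of switching clock rings $\Lambda^\epsilon_n = \Lambda^\epsilon_{[0,1],n}$ is \emph{finite} a.s.\ for each fixed $n$. Indeed, for fixed $n$ the scaled lattice $S_n(\Z^2_{\text{even}})$ is locally finite, and $\Xi^\epsilon_n = D^\epsilon_n \cap A^{\tilde{\epsilon}}_n$ is contained in a bounded box by the same compactness argument as Lemma \ref{lem:CmptBox} (a separating point in $\Xi^\epsilon_n$ has both age and survival time exceeding $\tilde{\epsilon}$ in the $\tanh$ metric, forcing it into a deterministic region up to an a.s.\ finite random excess controlled by Borel--Cantelli). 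Since each scaled lattice point carries a rate-$1/\sqrt{n}$ Poisson clock over the finite horizon $[0,1]$, only finitely many points in this bounded box ring at all, so $\Lambda^\epsilon_n$ is a finite subset of $[0,1]$ almost surely. First I would record this finiteness, then choose $\delta = \delta(\epsilon,\omega,n) > 0$ so small that any two distinct points of $\Lambda^\epsilon_n$, as well as each such point and the endpoints $0,1$, are separated by more than $2\delta$; this is possible precisely because $\Lambda^\epsilon_n$ is finite.

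With this $\delta$ fixed, I would construct a partition $0 = s_0 < s_1 < \cdots < s_m = 1$ with $s_{i+1} - s_i > \delta$ for every $i$ and with $\Lambda^\epsilon_n \cap (s_i, s_{i+1}) = \emptyset$ for every $i$; separating the (finitely many) ring times by more than $2\delta$ guarantees such a partition exists. On each block we then have $\Lambda^\epsilon_{[s_i, s_{i+1}), n} \cap (s_i, s_{i+1}) = \emptyset$, since $\Lambda^\epsilon_{[s_i,s_{i+1}),n} \subseteq \Lambda^\epsilon_n$. For $n$ large (as required by Lemma \ref{lem:PoissonClockSetIntersection_1}), this lets us conclude $\omega_{\overline{{\cal X}}_n}([s_i, s_{i+1})) \leq \epsilon$ for each $i$, hence $\omega_{\overline{{\cal X}}_n}(\delta) \leq \epsilon$, and letting $\epsilon \downarrow 0$ along the subadditivity in Remark \ref{rem:Subadditivity} gives the RCLL property for all large $n$, which is the stated (weaker) conclusion.

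The main obstacle I anticipate is not the combinatorial partition step, which is routine once finiteness is in hand, but rather verifying the finiteness of $\Xi^\epsilon_n$ carefully for the \emph{discrete} net. Unlike the continuum case, one cannot directly invoke Proposition 7.9 of \cite{NRS10} on local finiteness of net separation points; instead one must control the number of scaled $(1,2)$-type branching points of $\NN_{[0,1],n}$ that simultaneously have large diameter and large age. This requires a bounded-box argument analogous to Lemma \ref{lem:CmptBox}, where the survival-time condition $T^{(x,t)}_n > \tilde{\epsilon}$ from the discrete analogue of Lemma \ref{lem:PathAge} keeps the separating paths apart long enough to force containment in a compact region, together with the observation that each scaled lattice point supplies at most one clock ring in $[0,1]$ with probability bounded by $1/\sqrt{n}$. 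I would therefore spend the bulk of the argument establishing that $\Xi^\epsilon_n \subset [-M,M]^2$ for a random finite $M$ and that only finitely many lattice clocks in this box ring, deferring the (essentially identical) oscillation estimate to Lemma \ref{lem:PoissonClockSetIntersection_1}.
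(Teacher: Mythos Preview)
Your overall strategy matches the paper's: reduce RCLL-ness to showing that $\Xi^\epsilon_n$ lies in a compact box, deduce finiteness of $\Lambda^\epsilon_n$, and then partition $[0,1]$ around the finitely many clock rings and invoke Lemma~\ref{lem:PoissonClockSetIntersection_1}. The paper carries out exactly this plan, including the same caveat that Lemma~\ref{lem:PoissonClockSetIntersection_1} only furnishes the conclusion for all large $n$.

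The one place where your route is heavier than necessary is the compact-box step. You propose to rerun the Borel--Cantelli argument of Lemma~\ref{lem:CmptBox}, controlling how far a branching path can wander before hitting $[-M,M]^2$. The paper sidesteps this entirely by observing that, for fixed $n$, every path in $\overline{{\cal X}}_n(\tau)$ is piecewise linear with slope $\pm\sqrt{n}$, so the modulus of continuity of \emph{every} scaled discrete-web path is deterministically bounded (by $\sqrt{n}$). Once one knows (from the $\tanh$-metric age/survival constraints) that any $(y^n,s^n)\in\Xi^\epsilon_n$ has $s^n\in[-M,M]$ and that one of its outgoing paths meets $[-M,M]^2$, the Lipschitz bound immediately forces $y^n$ into a bounded interval---no probability is needed. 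Your Borel--Cantelli approach would also work, but the paper's shortcut is the discrete feature that makes this lemma genuinely easier than its continuum counterpart; you might want to exploit it rather than reproduce the continuum machinery.
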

\begin{proof}
The DyDW process is constructed on the discrete set of scaled lattice points. Because of
Lemma \ref{lem:PoissonClockSetIntersection_1}, for each $n$ to 
show that the process $\{{\cal X}_n(\tau) : \tau \in [0,1]\}$ is RCLL, 
it is enough to show that the set $\Xi^\epsilon_n$ is contained in a compact box a.s.
Fix $n \in \N$. Properties of the metric space $\R^2_c$ 
ensures that  there exists  $M \in \N$ such that 
\begin{itemize}
\item[(i)] any scaled lattice point $(y^n, s^n)$ in the set $\Xi^\epsilon_n$ must have 
$ s^n \in [-M, M]$ and
\item[(ii)] at least one of the two outgoing paths starting from 
$(y^n, s^n) \in \Xi^\epsilon_n$ must intersect with the box $[-M, M]^2$.      
\end{itemize}
Since, the scaled DyDW paths start from scaled lattice points and 
  for any $\tau \in [0,1]$ the modulus of continuity of   
all the paths in ${\cal X}_n(\tau)$ is uniformly bounded by $\sqrt{n}$,
condition (i) and (ii) ensure that the set of points in $\Xi^\epsilon_n$ must 
be  contained in a compact box. This completes the proof. 
\end{proof}

In order to show that the sequence 
$\{ \overline{{\cal X}_n}(\tau) : \tau \in [0,1]\}_{n \in \N}$ is tight we need to show the 
following proposition. 
\begin{proposition}
\label{prop:RCLL_Tightness}
Fix $ \epsilon > 0$ and $\gamma \in (0,1)$. There exist $n_0 =  n_0(\epsilon, \gamma)\in \N$ 
and $\delta = \delta(\epsilon, \gamma)$ such that 
\begin{equation}
\label{eq:RCLL_Tight}
\P( \omega_{\overline{{\cal X}_n}}(\delta) > \epsilon \text{ for some }n \geq n_0) < \gamma.
\end{equation}
\end{proposition}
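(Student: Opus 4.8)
The plan is to prove the uniform (over $n \geq n_0$) control on the modulus of continuity by transferring the strategy of Proposition \ref{prop:DyBWModCont} to the discrete setting and then upgrading from a fixed-$n$ statement to a statement uniform in $n$. The key point, already isolated in Lemma \ref{lem:PoissonClockSetIntersection_1}, is that for large $n$ a large modulus of continuity of $\overline{\cal X}_n$ over an interval $[\tau_1,\tau_2)$ forces the random clock-ring set $\Lambda^\epsilon_{[\tau_1,\tau_2),n}$ to intersect $(\tau_1,\tau_2)$. Thus, exactly as in the proof of Proposition \ref{prop:DyBWModCont}, if the full clock-ring set $\Lambda^\epsilon_n$ is finite and its points are $2\delta$-separated (and $2\delta$-separated from $0$ and $1$), then one can build a partition $0 = s'_0 < \cdots < s'_m = 1$ with mesh exceeding $\delta$ and no clock ring strictly inside any block, so that $\omega_{\overline{\cal X}_n}(\delta) \leq \epsilon$. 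So the event $\{\omega_{\overline{\cal X}_n}(\delta) > \epsilon\}$ is essentially contained in the event that two distinct points of $\Lambda^\epsilon_n$ are within $2\delta$ of each other (or within $2\delta$ of an endpoint).

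The real work, and the main obstacle, is making this \emph{uniform} in $n$. For a single $n$, Lemma \ref{lem:DyDW_RCLL} already gives that $\Xi^\epsilon_n$ (hence $\Lambda^\epsilon_n$) is finite a.s., but the bound on the number of such points could in principle blow up as $n \to \infty$. The approach I would take is to control the \emph{expected cardinality} of $\Xi^\epsilon_n$, or equivalently of $\Lambda^\epsilon_n$, uniformly in $n$. First I would use Lemma \ref{lem:CmptBox}-style reasoning together with the stated convergence $\NN_{[\tau_1,\tau_2],n} \Rightarrow \NN_{[\tau_1,\tau_2]}$ (Theorem 1.1 of \cite{SS08}) to confine $\Xi^\epsilon_n$ to a deterministic compact box $[-M,M]^2$ with probability at least $1 - \gamma/2$, uniformly for $n \geq n_0$; this is where I expect to invoke tightness of the discrete nets and the equicontinuity/compactness features of $\R^2_c$. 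Second, inside that box, the number of $\Xi^\epsilon_n$ points is dominated by the number of scaled lattice points whose associated Poisson clock rings during $[0,1]$ and which carry a separation of width at least $\epsilon$; since each clock has intensity $1/\sqrt n$ and there are $O(n)$ unscaled lattice points in the box $S_n^{-1}([-M,M]^2)$, the expected number of \emph{rung} clocks in the box is $O(\sqrt n)$, but the width-$\epsilon$ separation constraint and the diffusive convergence to the Brownian net force this expectation to stay bounded uniformly in $n$. Establishing this last uniform bound is the crux; I would obtain it from the convergence $\NN_{[0,1],n} \Rightarrow \NN_{[0,1]}$ together with the local finiteness of separation points in the limiting net (Proposition 7.9 of \cite{NRS10}), which guarantees the limiting count of $\Xi^\epsilon$ points is a.s. finite, and then transfer this to a uniform-in-$n$ bound via weak convergence and a Skorohod-type argument.

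Granting a uniform bound $N_0 = N_0(\epsilon,\gamma)$ such that $\P(|\Lambda^\epsilon_n| \leq N_0 \text{ and } \Lambda^\epsilon_n \subset [-M,M]^2\text{-box points}) \geq 1 - \gamma/2$ for all $n \geq n_0$, I would finish as follows. Conditioned on having at most $N_0$ clock rings, each ring is a point in $[0,1]$, and the rings governing distinct $\Xi^\epsilon_n$ points have a joint law that is absolutely continuous (the Poisson ring times are diffuse), so the probability that any two of at most $N_0$ rings fall within $2\delta$ of each other, or that any ring falls within $2\delta$ of $\{0,1\}$, is bounded by $\binom{N_0}{2}\cdot c\delta + 2N_0\cdot c\delta$, which can be made smaller than $\gamma/2$ by choosing $\delta = \delta(\epsilon,\gamma)$ small. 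On the complementary good event, the partition construction of Proposition \ref{prop:DyBWModCont}, combined with Lemma \ref{lem:PoissonClockSetIntersection_1} applied block-by-block, yields $\omega_{\overline{\cal X}_n}(\delta) \leq \epsilon$. Hence $\P(\omega_{\overline{\cal X}_n}(\delta) > \epsilon) < \gamma$ for all $n \geq n_0$, and since this bound is uniform, a standard Borel--Cantelli/union argument over $n \geq n_0$ upgrades it to the stated $\P(\omega_{\overline{\cal X}_n}(\delta) > \epsilon \text{ for some } n \geq n_0) < \gamma$. Combined with Lemma \ref{lem:DyDW_RCLL} and the finite-dimensional convergence of Proposition \ref{prop:FDD_Conv}, this tightness criterion gives process-level convergence in ${\cal D}^{\cal H}([0,1])$, completing Theorem \ref{thm:DYDW_Conv}.
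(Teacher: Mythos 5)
Your opening reduction (via Lemma \ref{lem:PoissonClockSetIntersection_1} and the block-partition construction) matches the paper's, but the two steps you yourself flag as the crux contain genuine gaps, and they are precisely where the paper does its real work. The most clear-cut error is your final upgrade: from a uniform bound $\P(\omega_{\overline{{\cal X}_n}}(\delta) > \epsilon) < \gamma$ valid for each $n \geq n_0$ you cannot conclude $\P(\omega_{\overline{{\cal X}_n}}(\delta) > \epsilon \text{ for some } n \geq n_0) < \gamma$. A union bound over infinitely many $n$ gives nothing, and Borel--Cantelli requires $\sum_n \P(\cdot) < \infty$, which a uniform bound of $\gamma$ does not provide. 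The proposition bounds the probability of a union over all $n \geq n_0$, which is strictly stronger than the per-$n$ statement; the paper obtains the union form directly by working on a Skorohod-coupled probability space where the scaled discrete nets converge almost surely to the corresponding Brownian nets (Proposition \ref{prop:JtConv_Nets}), so that events of the form ``$\Lambda^{\epsilon/2}_{j,n} \neq \emptyset$ for some (or infinitely many) $n \geq n_0$'' can be compared pathwise with the continuum sets $\Lambda^{\epsilon/2}_{j}$ (Lemmas \ref{lem:Lambda_Lambda^n_set_comparison} and \ref{lem:Xi_n_Fluctuation_bd}).

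The second gap is the uniform-in-$n$ cardinality bound on $\Xi^\epsilon_n$ (equivalently $\Lambda^\epsilon_n$), which you assert follows from $\NN_n \Rightarrow \NN$ plus local finiteness of separation points in the limit ``via weak convergence and a Skorohod-type argument.'' Weak convergence does not transfer such bounds: the number of separation points of diameter exceeding $\epsilon$ carrying a clock ring is not a continuous functional on $({\cal H}, d_{{\cal H}})$, and finiteness of the limiting count does not by itself prevent the discrete counts from blowing up along the sequence; making your expected-count heuristic rigorous would require quantitative random-walk estimates that you do not supply. The paper avoids expectations entirely: it partitions $[0,1]$ into $m_0$ blocks chosen so that the a.s.\ finite continuum set $\Lambda^{\epsilon/2}$ has at most one point per block (and none in adjacent blocks), and then proves the blockwise comparison $\#\Lambda^{\epsilon/2}_{j,n} \leq \#\Lambda^{\epsilon/2}_{j}$ for all $j$ and all $n \geq n_0$ with high probability (Proposition \ref{prop:RCLL_Tightness_1}), using the coupled a.s.\ convergence of left/right paths with their meeting times, the i.i.d.\ structure across blocks (Lemma \ref{lemma:iid_1}), and a stopping-time/strong Markov argument to exclude two rings in a single block. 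Your alternative finish---conditioning on $\#\Lambda^\epsilon_n \leq N_0$ and invoking a density bound $c$ so that pairwise ring gaps below $2\delta$ have probability at most $c\delta$---has the same difficulty: the ring times attached to points of $\Xi^\epsilon_n$ are size-biased by the separation geometry, so a density bound uniform in $n$ and in the conditioning is itself a claim that needs proof.
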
 
We first describe a strategy to prove the above proposition. 
Because of Lemma \ref{lem:PoissonClockSetIntersection_1}, in order  to prove Proposition \ref{prop:RCLL_Tightness} it suffices to show that there exist
$n_0 =  n_0(\epsilon, \gamma)\in \N$ 
and $\delta = \delta(\epsilon, \gamma)$ such that with probability 
bigger than $1-\gamma$, for all $n \geq n_0$ there exists a partition 
(possibly random) $\{[\tau_i, \tau_{i+1}) : 0 \leq i \leq \ell -1 \}$ of $[0,1]$ with
\begin{itemize}
\item[(i)] $\min\{\tau_{i+1} - \tau_i : 0 \leq i \leq \ell -1\} > \delta$ and 
\item[(ii)] $\Lambda^\epsilon_{(\tau_i , \tau_{i+1}),n} = \emptyset$ for all 
 $ 0 \leq i \leq \ell -1 $.
\end{itemize}
Towards this we choose $m \in \N$ (we will specify the choice of $m$ later) 
and partition the interval $[0,1]$ into $m$ many  intervals of length $1/m$ 
given by $\{ [j/m, (j+1)/m) :0 \leq j \leq m -1 \}$. 
For simplicity of notation we denote $\Lambda^{\epsilon/2}_{[j/m, (j+1)/m),n}$ simply as 
$\Lambda^{\epsilon/2}_{j,n}$. We note that we have considered  $\epsilon/2$ instead of $\epsilon$
and the reason for considering $\epsilon/2$ will be explained shortly. 
For any set $A$ let $\# A$ denote the cardinality of $A$.
We define the event
\begin{align}
\label{def:Event_F}
F^m_n := ( \Lambda^{\epsilon/2}_{0,n} = \Lambda^{\epsilon/2}_{m-1,n} = \emptyset)  \bigcap \bigl( 
\cap_{j=1}^{m-2} ( \# ( \cup_{i = j-1}^{j+1}   \Lambda^{\epsilon/2}_{i,n} ) \leq 1 )  \bigr ). 
 \end{align}
In other words, the event $F^m_n$ ensures that 
for any $1 \leq j \leq m  - 2$ the set $ \Lambda^{\epsilon/2}_{j,n} $
can have at most one element. 
Further, if the set $ \Lambda^{\epsilon/2}_{j,n}$ is non-empty, 
then both the `neighbouring' sets $\Lambda^{\epsilon/2}_{(j-1),n}$ 
and $\Lambda^{\epsilon/2}_{(j+1),n}$ must be empty.

On the event $F^m_n$ for $n \geq n_0$ let $ I \subset \{0, 1, \cdots, m - 1\} $
be such that $\Lambda^{\epsilon/2}_{j,n}$ is non-empty if and only if $j \in I$
 and set 
\begin{align*}
\Lambda^{\epsilon/2}_{j,n} 
= \begin{cases}
\{ s_j \} & \text{ if }j \in I \\
 \emptyset & \text{ if }j \notin I.
\end{cases}
\end{align*}
Precisely, for each $j \in I$, the set $\Lambda^{\epsilon/2}_{j,n}$ is a singleton set
denoted by $\{ s_j \}$.
This allows us to choose a partition of the unit interval as 
$$
\Bigl ( \bigcup_{0\leq j \leq m-1 :
j - 1, j, j + 1 \notin I} [j/m , (j+1)/m)\Bigr )\bigcup 
\Bigl ( \bigcup_{j \in I} [(j-1)/m , s_j) \cup [s_j, (j+2)/m) \Bigr ).
$$ 
In other words, for $j \in I$ we modify the partition 
$\cup_{\ell = j-1}^{j+1}[\ell/m, (\ell+1)/m)$ as $[(j-1)/m, s_j) \cup [s_j, (j+2)/m)$. 
Choose $\delta < 1/m$ and we observe that  
each interval in the above partition has length strictly bigger than $\delta$. 
For $0 \leq j \leq m-1$ with $j - 1, j, j + 1 \notin I$ we have 
$\Lambda^{\epsilon/2}_{j,n} = \emptyset$ implying 
$\omega_{\overline{{\cal X}_n}}([j/m, (j+1)/m)) < \epsilon/2 < \epsilon$.
On the other hand for $j \in I$, a.s. $j/m$ is not a jump point and 
Remark \ref{rem:Subadditivity} ensures that we have 
\begin{equation}
\label{eq:Omega_Subadditve}
\omega_{\overline{{\cal X}}_n} ( [(j-1)/m , s_j)) \leq 
\omega_{\overline{{\cal X}}_n}\bigl ([(j-1)/m, j/m) \bigr ) + 
\omega_{\overline{{\cal X}}_n}\bigl ( [j/m, s_j) \bigr ).
\end{equation}
For both the intervals on the R.H.S., we have 
that the corresponding $\Lambda^{\epsilon/2}$ sets are empty giving us that   
 $ \omega_{\overline{{\cal X}}_n} ( [(j-1)/m , s_j))
 \leq \epsilon/2 + \epsilon/2 \leq \epsilon $.
  Similar logic applies for the part $\omega_{{\cal X}_n} ( [s_j, (j+2)/m ))$ too, 
as we have $\Lambda^{\epsilon/2}_{[s_j,(j+2)/m), n} \cap (s_j,(j+2)/m)  = \emptyset$. 
 This shows that on the event $F^{m}_n $ for any $0< \delta < 1/m$ we have 
 $\omega_{\overline{{\cal X}}_n}(\delta) < \epsilon $.
 Hence,  in order to prove Proposition \ref{prop:RCLL_Tightness}, 
it is enough to prove the following proposition. 
\begin{proposition}
\label{prop:RCLL_Tightness_1}
Fix $ \epsilon > 0$ and $\gamma \in (0,1)$. There exist $n_0  =  n_0(\epsilon, \gamma), 
m_0  =  m_0(\epsilon, \gamma)\in \N$ such that 
\begin{equation}
\label{eq:RCLL_Tight_1}
\P( F^{m_0}_n \text{ for all }n \geq n_0) \geq 1 - \gamma.
\end{equation}
\end{proposition}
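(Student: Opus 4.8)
The plan is to deduce (\ref{eq:RCLL_Tight_1}) from the convergence of the scaled discrete nets to the corresponding Brownian net, exploiting the a.s.\ finiteness of the limiting set $\Xi^{\epsilon/2}=\Xi^{\epsilon/2}_{[0,1]}$. Throughout I would work on the common probability space carrying the DyDW at every scale $n$ together with the DyBW, under the natural coupling inherited from the constructions of \cite{SS08} and \cite{NRS10}. Such a coupling is essential here: were the randomness independent across $n$, the events $F^{m_0}_n$ would be independent and $\P(F^{m_0}_n\text{ for all }n\ge n_0)$ would collapse to zero, so the joint event in (\ref{eq:RCLL_Tight_1}) only has a chance because these events are strongly correlated for large $n$. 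Since $F^{m}_n$ depends on the switching set $\Lambda^{\epsilon/2}_n\subset(0,1)$ only through the blocks $[j/m,(j+1)/m)$ into which its points fall, it suffices to fix a single $m_0$ using the limiting configuration $\Lambda^{\epsilon/2}$ and then transfer the implied block-separation to all large $n$ at once.

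First I would record the structure of the limit. By Lemma \ref{lem:SeparPtContains} together with the local finiteness of the sets $S^{T_2}_{T_1}$ (Proposition 7.9 of \cite{NRS10}), the set $\Xi^{\epsilon/2}$ is a.s.\ finite; and since the deterministic dynamic times $0$ and $1$ a.s.\ carry no switching, the attached switching times $\Lambda^{\epsilon/2}=\{r_1<\cdots<r_N\}$ form a.s.\ a finite set of distinct points in the open interval $(0,1)$. Consequently the gap $\rho:=\min\{r_1,\,1-r_N,\,\min_{i}(r_{i+1}-r_i)\}$ is a.s.\ strictly positive, and I may pick $m_0=m_0(\epsilon,\gamma)\in\N$ so large that $\P(\rho\le 5/m_0)<\gamma/2$. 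On the event $\{\rho>5/m_0\}$ the points $r_1,\dots,r_N$ lie in blocks that are pairwise at least three apart and avoid the extreme blocks, so the limit satisfies the separation and boundary constraints of $F^{m_0}$ with margin.

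The heart of the argument is the coupled convergence $\Lambda^{\epsilon/2}_n\to\Lambda^{\epsilon/2}$, with matching cardinalities and convergent switching times. Using the net convergence $\NN_{[\tau_1,\tau_2],n}\Rightarrow\NN_{[\tau_1,\tau_2]}$ of \cite{SS08} and the common timed construction of \cite{NRS10,NRS09A}, I would argue that along the coupling the scaled discrete nets and their markings converge to the Brownian net and its Poissonian markings, whence the separation points, their maximal excursion sets, and the associated diameters and ages converge as well. The threshold $\epsilon/2$ is harmless because a.s.\ no point of $\Xi^{\epsilon/2}$ has diameter exactly $\epsilon/2$ or age exactly $\tilde{\epsilon}$ (with $\tilde{\epsilon}$ as in (\ref{eq:epsilon^tilde_choice})), so the functional counting diameter-$(>\epsilon/2)$, age-$(>\tilde{\epsilon})$ separation points is a.s.\ continuous at the limit; this simultaneously forbids spurious discrete points that vanish in the limit and limiting points that fail to be approximated. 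Combined with the convergence of the attached marking times, this would yield an a.s.\ finite random index $N_0$ such that for all $n\ge N_0$ the set $\Lambda^{\epsilon/2}_n$ has exactly $N$ points, the $i$-th lying within $1/m_0$ of $r_i$.

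Finally I would combine the two ingredients on the event $\{\rho>5/m_0\}\cap\{N_0\le n_0\}$. There, for every $n\ge n_0$ each point of $\Lambda^{\epsilon/2}_n$ is within $1/m_0$ of some $r_i$, so any two distinct points of $\Lambda^{\epsilon/2}_n$ differ by more than $5/m_0-2/m_0=3/m_0$ and hence lie in blocks at least three apart, while each point lies beyond $5/m_0-1/m_0=4/m_0$ from $\{0,1\}$ and thus misses the extreme blocks; that is, $F^{m_0}_n$ holds. As $N_0$ is a.s.\ finite I may enlarge $n_0=n_0(\epsilon,\gamma)$ so that $\P(N_0\le n_0)\ge 1-\gamma/2$, giving
\begin{equation*}
\P(F^{m_0}_n\text{ for all }n\ge n_0)\ \ge\ \P\bigl(\{\rho>5/m_0\}\cap\{N_0\le n_0\}\bigr)\ \ge\ 1-\gamma,
\end{equation*}
which is (\ref{eq:RCLL_Tight_1}). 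The principal obstacle is the convergence claimed in the previous paragraph: upgrading the distributional net convergence of \cite{SS08} to an a.s.\ convergence of the \emph{timed} marking process under the common coupling, with matching number of significant switches. The delicate points are the continuity of the diameter and age functionals at their thresholds --- so that no near-critical separation point is miscounted between the discrete and continuum pictures --- and the convergence of the discrete switching times themselves to their continuum limits.
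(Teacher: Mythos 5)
Your overall scaffolding (Skorohod coupling so that the net convergence is a.s., finiteness and separation of the limiting switching set $\Lambda^{\epsilon/2}$, choice of $m_0$ from the limiting gap, then transfer to all large $n$) is the same skeleton the paper uses. But the step you yourself flag as ``the principal obstacle'' is not a technical loose end --- it is the entire content of the proposition, and your route to it does not work as stated. You claim that under the coupling $\Lambda^{\epsilon/2}_n \to \Lambda^{\epsilon/2}$ with \emph{matching cardinalities}: an a.s.\ finite $N_0$ such that for all $n \geq N_0$ the discrete set has exactly $N$ points, each within $1/m_0$ of the corresponding $r_i$. The threshold argument you give (a.s.\ no separation point has diameter exactly $\epsilon/2$ or age exactly $\tilde{\epsilon}$) only prevents points from drifting across the diameter/age cutoffs; it does nothing to prevent \emph{multiplicity}: for infinitely many $n$, two distinct discrete clock rings (at nearby lattice separation points, or two rings of the same clock within one dynamic block) can each produce a diameter-$(>\epsilon/2)$ excursion while both converge to the same continuum marked point $r_i$. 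A counting functional of a point configuration is not continuous against merging, so ``exactly $N$ points eventually'' does not follow from a.s.\ convergence of the nets plus threshold non-degeneracy. (A secondary issue: $\tilde{\epsilon}$ is itself random, defined as a supremum in (\ref{eq:epsilon^tilde_choice}), so even the no-atom-at-threshold claim needs justification.)

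The paper never proves, and never needs, your a.s.\ matching statement; it is precisely structured to avoid it. It establishes only the weaker, one-sided Lemma \ref{lem:Lambda_Lambda^n_set_comparison} (per block, ``limit set non-empty'' equals ``discrete set non-empty for infinitely many $n$''), and then handles the multiplicity problem \emph{probabilistically}, not almost surely: conditionally on the dynamic-time stopping time $\sigma^n$ of the first switch in a block, the strong Markov property in dynamic time together with Lemma \ref{lem:Xi_n_Fluctuation_bd} bounds the probability of a \emph{second} significant switch in the same block, uniformly over $n \geq n_0$, by $\gamma/(8m_1)$; the i.i.d.\ structure of the blocks (Lemma \ref{lemma:iid_1}), stationarity, and the decomposition over block configurations $G_\lambda$ then sum these rare events to the desired bound. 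The conclusion of the paper's argument explicitly tolerates a small-probability event on which some discrete block has two or more switch points --- which is exactly the event your proposal asserts to be eventually empty almost surely, without proof. To repair your proposal you would either have to prove the a.s.\ matching-cardinality convergence of the timed marking process (a statement stronger than anything in \cite{SS08} or \cite{NRS10}, and at least as hard as the proposition itself), or retreat to the paper's probabilistic block-by-block estimate.
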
 
This proposition will be proved through a sequence of lemmas. 
We first describe our heuristics in words. 
 The exact value of $m_0 \in \N$  will be specified later (see (\ref{eq:m_0_n_3})).
We consider a partition $\{[j/m_0, (j+1)/m_0) : 0 \leq j \leq m_0 -1\}$ 
of the unit interval into sub intervals of width $1/m_0$.
Corresponding to the partition $\{[j/m_0, (j+1)/m_0) : 0 \leq j \leq m_0 - 1\}$ of 
$[0,1]$ we consider the set $\Lambda^{\epsilon/2}_{j} = \Lambda^{\epsilon/2}_{[j/m_0, (j+1)/m_0)}$
for $0 \leq j \leq m_0 - 1$. We choose $m_0 \in \N$ large enough so that 
for any $j$ the set $\Lambda^{\epsilon/2}_{j}$ contains at most one element and 
for most $j$'s, the set $\Lambda^{\epsilon/2}_{j}$ is empty. Precisely
we define an event $B_{m_0}$ such that 
\begin{align*}
B_{m_0} :=  & ( \Lambda^{\epsilon/2}_{0} = \Lambda^{\epsilon/2}_{m_0 - 1}  = \emptyset ) 
\bigcap \bigl ( \#(\Lambda^{\epsilon/2}_{j} \cup \Lambda^{\epsilon/2}_{j+1}) 
\leq 1 \text{ for all }1 \leq j \leq m_0 - 2 \bigr ) ,
\end{align*}
and choose $m_0$ large enough so that $\P(B_{m_0})$ is close to one. 
Here is a brief justification about why such a choice of $m_0$ is always possible. 
As the set $\Lambda^{\epsilon/2}$ is a.s. finite, the r.v. $Z 
:= \min\{ s_1, |s_1 - s_2|, 1 - s_2 : s_1 , s_2 \in \Lambda^{\epsilon/2} \}$
is strictly positive with probability $1$. We observe that on the event $\{ Z > \delta\}$, 
the event $B_{m_0}$ holds for any $m_0 \in \N$ with $m_0 < \delta/3$. As $\P( Z \leq \delta )$ 
decreases to zero as $\delta \downarrow 0$, we can choose $m_0$ large enough 
to make $\P(B_{m_0})$ arbitrarily close to one.

Based on the above choice of $m_0$ we show that with high probability for all large $n$
the set $\Lambda^{\epsilon/2}_{j,n}$ is non-empty only if the set 
$\Lambda^{\epsilon/2}_{j}$ is non-empty and a non-empty $\Lambda^{\epsilon/2}_{j,n}$
contains exactly one element. This implies (\ref{eq:RCLL_Tight_1}). 
In other words, to prove (\ref{eq:RCLL_Tight_1}) it suffices to show that there 
exist $n_0 = n_0(\epsilon, \gamma), m_0 = m_0(\epsilon, \gamma) \in \N$ such that   
\begin{equation}
\label{eq:Fluctuation_X_n_1}
 \P \bigl( B_{m_0} \bigcap ( \cap_{j=0}^{m_0 - 1}(\# \Lambda^{\epsilon/2}_{j,n} \leq 
 \# \Lambda^{\epsilon/2}_{j}  \text{ for all }n \geq n_0 ) ) \bigr) \geq 1 - \gamma.
\end{equation}
In the following, through a sequence of lemmas we prove (\ref{eq:Fluctuation_X_n_1}) 
and thereby prove Proposition \ref{prop:RCLL_Tightness_1} to obtain tightness.
Towards this we work on a specific probability space to use properties of almost
sure convergence. Below we will construct our required probability space of 
almost sure convergence.

We consider a partition $\{[j/m_0, (j+1)/m_0) : 0 \leq j \leq m_0 -1\}$ 
of the unit interval into sub intervals of width $1/m_0$.
Corresponding to this partition of the dynamic time interval, 
the $m_0 + 1$ dimensional vector of `corresponding' nets is given by
$$
(\NN, \NN_0, \cdots , \NN_{m_0 - 1}).
$$ 
We observe that  for any $0 \leq j \leq m_0 - 1$ we have 
$ \NN_j \stackrel{d}{=} \NN(1/m_0)$.
Further, we observe that 
$$
\pi \in \WW(s) \text{ for some }s \in [j/m, (j+1)/m ) \text{ only if }
\pi \in \NN_j.
$$ 
The next proposition proves that as $n \to \infty$, the vector of scaled discrete nets 
$\bigl( \NN_n, \NN_{0,n} , \cdots, \NN_{m_0 - 1,n}\bigr)$ as ${\cal H}^{m_0+1}$ 
valued random variable converges in distribution to the vector of corresponding 
 nets $\bigl( \NN , \NN_0, \cdots, \NN_{m_0 - 1}\bigr)$.     
\begin{proposition}
\label{prop:JtConv_Nets}
As $n \to \infty$, we have 
$$
\bigl( \NN_n, \NN_{0,n}, \cdots, \NN_{m_0 -1, n}) \Rightarrow 
\bigl( \NN , \NN_0, \cdots, \NN_{m_0 - 1}\bigr),
$$
as ${\cal H}^{m_0 +1}$ valued random variables.
\end{proposition}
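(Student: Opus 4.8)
The plan is to deduce the joint convergence from the already-available marginal convergences by a standard tightness-and-identification argument. First I would observe that each coordinate sequence is tight in $\mathcal{H}$: for the first coordinate this is Theorem 1.1 of \cite{SS08} ($\NN_n \Rightarrow \NN$), and for the $j$-th coordinate it is the same theorem applied to the deterministic interval $[j/m_0,(j+1)/m_0]$, which yields $\NN_{j,n}\Rightarrow \NN_j$. Since a sequence of $\mathcal{H}^{m_0+1}$-valued random variables is tight precisely when each of its $m_0+1$ coordinate sequences is tight (a finite product of compact sets being compact in the product topology), the joint sequence is tight. Hence every subsequence admits a further subsequence along which $(\NN_n,\NN_{0,n},\dots,\NN_{m_0-1,n})$ converges weakly to some limit $(\hat\NN,\hat\NN_0,\dots,\hat\NN_{m_0-1})$, and the whole task reduces to identifying this limit.

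By the marginal convergences, $\hat\NN \stackrel{d}{=} \NN$ and $\hat\NN_j \stackrel{d}{=} \NN_j$ for each $j$; the real content is that the \emph{joint} law agrees with that of $(\NN,\NN_0,\dots,\NN_{m_0-1})$ on the common probability space of \cite{NRS10}. I would pin this down using two structural features that are stable under passage to the limit. The first is containment: since $\mathcal{A}_{[j/m_0,(j+1)/m_0),n}\subseteq \mathcal{A}_n$, we have $\NN_{j,n}\subseteq \NN_n$ as compact subsets of $(\Pi,d_\Pi)$, and this inclusion is preserved by Hausdorff limits, so $\hat\NN_j \subseteq \hat\NN$ a.s. The second, and the crux, is a conditional-independence structure: the switching clocks in the disjoint dynamic-time intervals $[j/m_0,(j+1)/m_0)$ are governed by independent Poisson processes, so that conditionally on the underlying discrete web the branching structures of $\NN_{0,n},\dots,\NN_{m_0-1,n}$ become asymptotically independent, while their union generates the branching structure of $\NN_n$. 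The continuum object has exactly this description, because in the construction of \cite{NRS10} the marking intensity is $L\times\ell$ with $\ell$ Lebesgue measure on the dynamic-time axis: markings in disjoint time intervals are independent, $\NN_0,\dots,\NN_{m_0-1}$ are conditionally independent given the driving web, and $\NN$ is generated by the union of all the markings. Matching the two descriptions forces $(\hat\NN,\hat\NN_0,\dots,\hat\NN_{m_0-1})\stackrel{d}{=}(\NN,\NN_0,\dots,\NN_{m_0-1})$.

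To carry out this identification concretely I would reduce to a fixed finite deterministic set of starting points $\{\mathbf{z}_1,\dots,\mathbf{z}_r\}\subset\Q^2$ and track, in each of the $m_0+1$ coordinate nets, the family of net-paths emanating from these points. Each such family converges (branching-coalescing walks to the corresponding branching-coalescing Brownian motions), and their joint convergence across coordinates follows from the joint convergence of the driving data, namely the common web skeleton together with the switching point process restricted to each sub-interval. By the characterization of the Brownian net as the closure of such path families (Theorem 5.5 of \cite{NRS10}), together with the tightness established above, this finite-point convergence upgrades to convergence of the full nets and forces the limit to be $(\NN,\NN_0,\dots,\NN_{m_0-1})$.

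\textbf{The main obstacle} will be the identification step, specifically transferring the discrete containment-and-conditional-independence picture to the continuum coupling without losing or creating branching in the limit. The delicate point is to show that each macroscopic separation point of $\NN_n$ is captured by exactly one sub-interval net $\NN_{j,n}$, namely the one whose interval contains the clock ring that produced it, and that no positive fraction of separation points clusters near the partition endpoints $j/m_0$ as $n\to\infty$. Here the local finiteness of macroscopic separation points (Proposition 7.9 of \cite{NRS10}, together with its discrete counterpart in \cite{SS08}) and the fact that the deterministic lines $\R\times\{j/m_0\}$ carry no $(1,2)$ points a.s. are what make the argument go through; the remaining steps are the routine tightness-and-identification machinery.
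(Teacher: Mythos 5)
Your overall skeleton (tightness of the joint sequence, then identification of subsequential limits) matches the paper's, and your containment observation $\NN_{j,n}\subseteq\NN_n$ passing to Hausdorff limits is sound. But the crux of your argument --- the identification of the \emph{joint} law --- rests on a structural claim that is not correct as stated. You assert that, both discretely and in the continuum, $\NN_0,\dots,\NN_{m_0-1}$ are conditionally independent given ``the driving web,'' so that matching descriptions forces the joint law. In fact $\NN_j$ is not a function of $(\WW(0),\,\text{markings in }[j/m_0,(j+1)/m_0))$ alone: the net over the $j$-th dynamic interval is built by allowing both directions at points marked during that interval \emph{on the web} $\WW(j/m_0)$, which itself depends on all switchings in $[0,j/m_0)$. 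The same is true discretely, where the arrow in use at the start of interval $j$ depends on how many clock rings occurred earlier. The correct structure is Markovian in $j$, with the webs at the partition times as hidden states that are not coordinates of your vector; so ``conditional independence given the driving web'' gives no uniqueness statement to match against, and your fallback --- joint convergence of the ``driving data'' (web skeleton together with the switching point processes) --- is precisely the kind of statement that needs proof and is never supplied. As it stands, the identification step is a genuine gap.

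The paper closes this gap with a different device, which you are missing. It appends to the vector of nets the DyDW paths at all rational dynamic times; their convergence to the DyBW paths is exactly the finite-dimensional convergence already proved (Proposition \ref{prop:FDD_Conv}). Since $\pi^n\in{\cal X}_n(\tau)$ for $\tau\in[j/m_0,(j+1)/m_0)$ implies $\pi^n\in\NN_{j,n}$, the (Skorohod-realized) subsequential limit ${\cal Z}_j$ contains the DyBW paths of the $j$-th sub-interval. The identification is then completed by Corollary \ref{cor:Correspond_NetVector}: any vector of Brownian nets with the correct marginal laws that contains the DyBW paths of the respective sub-intervals must equal the vector of corresponding nets a.s. That corollary is proved by a minimality argument --- Brownian nets are closed under hopping (Proposition 1.4 of \cite{SS08}), the corresponding net is the a.s. limit of the partial nets built from DyBW paths (\cite{NRS10}), so any such net contains the corresponding net, and equality in distribution then forces a.s. equality. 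Replacing your conditional-independence heuristic with this containment-plus-minimality corollary is what turns the sketch into a proof; without it (or an equivalent rigorous identification), the argument does not go through.
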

We postpone proving Proposition \ref{prop:JtConv_Nets} for the moment and proceed. 
In fact, as the DyDW paths are independent till the time they meet, Sun et. al. \cite{SS08}
showed that the scaled rightmost and leftmost paths jointly with their 
first meeting times converge to left right Brownian paths and their first meeting 
times. The same holds for  each of the dynamic time interval $[j/m_0, (j+1)/m_0)$
for $0 \leq j \leq m_0 - 1$. We need to introduce some notations. 
For $(x,t) \in \Q^2$ let $l^{(x,t)}$ and $r^{(x,t)}$ respectively 
denote the leftmost and rightmost path in the Brownian net $\NN$ starting from $(x,t)$. 
Let $l^{(x,t)}_{j}$ and $r^{(x,t)}_{j}$ respectively denote the same for
the Brownian net $\NN_j$ constructed over dynamic time interval $[j/m_0, (j+1)/m_0)$.  
For $(x,t), (y,s) \in \Q^2$ let $\tau^{(x,t),(y,s)}$ denote the first meeting time 
of $l^{(x,t)}$ and $r^{(y,s)}$ and let $\tau^{(x,t),(y,s)}_j$ denote the same 
for paths $l^{(x,t)}_j$ and $r^{(y,s)}_j$.   
For $(x,t) \in \Q^2$ let $(x^n, t^n) \in \Z^2_{\text{even}}$ be such that 
$S_n(x^n, t^n) \to (x,t)$ as $n \to \infty$. 
For $(x,t) \in \Q^2$ let $l^{(x^n,t^n)}_n$ and $r^{(x^n,t^n)}_n$ respectively 
denote the scaled leftmost and rightmost DW path in $\NN_n$ starting from $S_n(x^n,t^n)$.
$l^{(x^n,t^n)}_{j,n}$ and $r^{(x^n,t^n)}_{j,n}$ denote the same in $\NN_{j,n}$.
For $(x,t), (y,s) \in \Q^2$ the first meeting times $\tau^{(x,t),(y,s)}_n$ and 
$\tau^{(x,t),(y,s)}_{j,n}$ are similarly defined. Assuming Proposition \ref{prop:JtConv_Nets}, we have  
\begin{align}
\label{eq:JtConv_MeetingTime_BNetVector}
& ( l^{(x^n,t^n)}_{n}, r^{(y^n,s^n)}_{n}, \tau^{(x,t),(y,s)}_{n}, l^{(x^n,t^n)}_{0,n},r^{(y^n,s^n)}_{0,n}, \tau^{(x,t),(y,s)}_{0,n}, 
\cdots ,l^{(x^n,t^n)}_{m_0-1,n},r^{(y^n,s^n)}_{m_0-1,n}, \tau^{(x,t),(y,s)}_{m_0 -1,n} \bigr )_{(x,t),(y,s) \in \Q^2} \nonumber \\
& \Rightarrow 
( l^{(x,t)}, r^{(y,s)}, \tau^{(x,t),(y,s)}, l^{(x,t)}_{0},r^{(y,s)}_{0}, 
\tau^{(x,t),(y,s)}_{0}, \cdots , l^{(x,t)}_{m_0-1},r^{(y,s)}_{m_0-1}, 
\tau^{(x,t),(y,s)}_{m_0 -1} \bigr )_{(x,t),(y,s) \in \Q^2}.
\end{align}
Here we give a brief justification of Equation (\ref{eq:JtConv_MeetingTime_BNetVector}).
From Proposition \ref{prop:JtConv_Nets} we have joint convergence of 
the vector of scaled discrete nets to the vector of Brownian nets. This 
guarantees that the vector of left right scaled DyDW paths in (\ref{eq:JtConv_MeetingTime_BNetVector})
converges to the vector of respective left right coalescing Brownian motions. 
Since, scaled DyDW paths are independent till the time they meet (at a fixed dynamic time point), respective first intersection times jointly converge too.

By Skorohod's embedding, we assume that we are working on a probability space so 
that the convergence as in (\ref{eq:JtConv_MeetingTime_BNetVector}) happens almost surely. 
In what follows, we will continue to work on this 
probability space and we will use consequences of this almost sure convergence extensively. 
It should be mentioned here that the exact choice of $m_0 \in \N$ will be mentioned later 
in Equation (\ref{eq:m_0_n_3}). 

The next lemma follows from consequences of almost sure convergence in path space.
\begin{lemma}
\label{lem:Lambda_Lambda^n_set_comparison}
For any $0 \leq j \leq m_0 - 1$, we have equality of the following events 
\begin{equation*}
\{\Lambda^{\epsilon/2}_j \neq \emptyset \} = 
\{\Lambda^{\epsilon/2}_{j, n} \neq \emptyset \text{ for infinitely many }n\}.
\end{equation*}
\end{lemma}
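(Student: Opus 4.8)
The plan is to first strip away the Poisson-clock bookkeeping and reduce the claim to a statement about the separation-point sets $\Xi^{\epsilon/2}_j$ and $\Xi^{\epsilon/2}_{j,n}$. A ring belongs to $\Lambda^{\epsilon/2}_j$ exactly when it is the switching time of a point of $\Xi^{\epsilon/2}_j$, and every point of $\Xi^{\epsilon/2}_j$ carries at least one such ring in $[j/m_0,(j+1)/m_0)$; hence $\{\Lambda^{\epsilon/2}_j\neq\emptyset\}=\{\Xi^{\epsilon/2}_j\neq\emptyset\}$, and likewise in the discrete net. So it suffices to prove, almost surely,
\[
\{\Xi^{\epsilon/2}_j\neq\emptyset\}=\{\Xi^{\epsilon/2}_{j,n}\neq\emptyset\ \text{for infinitely many }n\}.
\]
Throughout I would work on the probability space of \eqref{eq:JtConv_MeetingTime_BNetVector}, where the scaled left--right paths and their first meeting times converge almost surely to their continuum counterparts and where (via Proposition \ref{prop:JtConv_Nets}) $\NN_{j,n}\to\NN_j$ in $({\cal H},d_{\cal H})$. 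The proof is then two inclusions.

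For $\subseteq$ I would show a continuum separation forces a discrete one for all large $n$. On $\{\Xi^{\epsilon/2}_j\neq\emptyset\}$ fix $(x,t)$ with $D^{(x,t)}_j>\epsilon/2$ and $A^{(x,t)}_j>\tilde{\epsilon}$. Using that $\NN_j$ is the closure of its rational skeleton together with the age bound, I would realise the witnessing excursion by a leftmost path $l^{p}_j$ and a rightmost path $r^{q}_j$ with $p,q\in\Q^2$ chosen so that both pass through $(x,t)$ and track its left and right outgoing branches, giving $d_\Pi(l^{p}_j,r^{q}_j)_{[t,\tau^{p,q}_j]}>\epsilon/2$. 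The diameter, the age and the endpoint $\tau^{p,q}_j$ are continuous functionals of precisely the quantities in \eqref{eq:JtConv_MeetingTime_BNetVector}; since the continuum values strictly exceed the thresholds $\epsilon/2$ and $\tilde{\epsilon}$, the discrete quantities $d_\Pi(l^{p,n}_j,r^{q,n}_j)_{[t^n,\tau^{p,q}_{j,n}]}$ exceed $\epsilon/2$ for all large $n$, producing a point of $\Xi^{\epsilon/2}_{j,n}$ (its switching during the interval being automatic, since branching in $\NN_{j,n}$ occurs only at points whose arrow switched in $[j/m_0,(j+1)/m_0)$). Thus $\Xi^{\epsilon/2}_{j,n}\neq\emptyset$ for all large $n$, a fortiori infinitely often.

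For $\supseteq$ I would extract a limit from discrete separations. On the event that $\Xi^{\epsilon/2}_{j,n}\neq\emptyset$ along an infinite set of $n$, pick $(x_n,t_n)\in\Xi^{\epsilon/2}_{j,n}$ there. The first step is uniform confinement: repeating the Brownian-tail estimates of Lemma \ref{lem:CmptBox} at the level of the discrete nets $\NN_{j,n}$ yields, a.s., an $M$ with $\Xi^{\epsilon/2}_{j,n}\subset[-M,M]^2$ for all large $n$, so I may pass to a subsequence with $(x_n,t_n)\to(x,t)\in[-M,M]^2$. Approximating the lattice starts of the witnessing paths by rationals and invoking the a.s.\ convergence of \eqref{eq:JtConv_MeetingTime_BNetVector}, the discrete witnesses converge to continuum left--right paths through $(x,t)$; as diameters and meeting times converge, the limit satisfies $D^{(x,t)}_j\geq\epsilon/2$ (so the two limiting paths genuinely separate) and $A^{(x,t)}_j\geq\tilde{\epsilon}$, making $(x,t)$ a bona fide separation point of $\NN_j$.

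The remaining, and main, difficulty is that this limit yields only $\geq$ at the thresholds, whereas membership in $\Xi^{\epsilon/2}_j$ demands strict inequalities. For the diameter I would upgrade $\geq$ to $>$ by non-atomicity: $\Xi^{\epsilon/4}_j$ is a.s.\ finite by Lemma \ref{lem:SeparPtContains}, and the diameters of its finitely many points are absolutely continuous random variables (maximal excursion widths of left--right Brownian paths), so a.s.\ none equals the deterministic level $\epsilon/2$; hence $D^{(x,t)}_j>\epsilon/2$. The age threshold $\tilde{\epsilon}$ is random, so for it I would instead carry a strict margin through the limit, selecting the witnessing incoming path to start at a fixed rational time $\sigma$ with $\tanh(t)-\tanh(\sigma)$ strictly above $\tilde{\epsilon}$, so that the limiting age strictly exceeds $\tilde{\epsilon}$. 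With both strict inequalities restored, $(x,t)\in\Xi^{\epsilon/2}_j$, giving $\Xi^{\epsilon/2}_j\neq\emptyset$. The hard part throughout is this backward direction: securing confinement uniformly in $n$ and, above all, promoting the limiting strict inequalities, which rests on the non-atomicity of separation-point diameters and on threading the age bound through the rational skeleton.
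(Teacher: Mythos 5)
Your proposal follows essentially the same route as the paper's own proof: reduce $\{\Lambda^{\epsilon/2}_j \neq \emptyset\}$ to $\{\Xi^{\epsilon/2}_j \neq \emptyset\}$ (and similarly for the discrete sets), then prove both inclusions on the coupled probability space of (\ref{eq:JtConv_MeetingTime_BNetVector}), using a.s.\ convergence of the skeletal leftmost/rightmost paths together with their first meeting times --- the continuum-to-discrete direction by approximating the left and right boundaries of the $\epsilon/2$-fat excursion and locating a discrete separation point between the approximating paths, the discrete-to-continuum direction by passing to a limit separation point of $\NN_j$. The only genuine divergence is your treatment of strictness at the thresholds in the limiting step: the paper simply asserts that the limit point has diameter exceeding the threshold and age at least $\tilde{\epsilon}$, i.e., it silently ignores the $\geq$ versus $>$ issue you raise, so your concern is a refinement rather than a reconstruction. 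Of your two repairs, the non-atomicity argument for the diameter is sound in spirit (the relevant excursion diameters form a countable family of random variables with atomless laws), but the age repair does not work as stated: you propose to select a witnessing incoming path starting at a fixed rational time $\sigma$ with $\tanh(t)-\tanh(\sigma)$ strictly above $\tilde{\epsilon}$, yet the ages of the discrete witnesses $(x_n,t_n)$ may decrease to $\tilde{\epsilon}$ as $n \to \infty$, in which case no fixed $\sigma$ with a uniform margin exists --- choosing it presupposes exactly the strict inequality you are trying to establish. Since the paper's proof is silent on this same point, your write-up is no weaker than the published argument, but the strict age bound remains an unpatched step in both.
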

\begin{proof} Because of stationary nature of 
our model, it suffices to prove Lemma \ref{lem:Lambda_Lambda^n_set_comparison} 
for $j = 0$.  
On the event  $\{\Lambda^{\epsilon/2}_0 \neq \emptyset \}$, 
we must have $\{ \Xi^{\epsilon/2}_0 \neq \emptyset \}$ as 
the set $\Lambda^{\epsilon/2}_0$
consists of Poisson clock rings in the dynamic time interval $[0,1/m_0)$
associated to points in the set $\Xi^{\epsilon/2}_0$. 
On the other hand, it is not difficult to see that we must have 
$$
\{ \Xi^{\epsilon/2}_0 \neq \emptyset \} \subseteq
\{ \Lambda^{\epsilon/2}_0 \neq \emptyset \},
$$
as there must be some switching event in the interval $[0, 1/m_0)$ (actually $(0, 1/m_0)$) in order to have the set 
$ \Xi^{\epsilon/2}_0$ non-empty.
Hence, the two events are equal. 
Similar reasoning holds for discrete scaled nets as well.
Hence, to prove Lemma \ref{lem:Lambda_Lambda^n_set_comparison} it is enough to show equality
of the following events 
$$
\{ \Xi^{\epsilon/2}_{0, n} \neq \emptyset \text{ for infinitely many }n\}
= \{ \Xi^{\epsilon/2}_0 \neq \emptyset \} .
$$
On the event $\{ \Xi^{\epsilon/2}_{0, n} \neq \emptyset 
\text{ for infinitely many }n\text{'s}\}$, almost sure convergence of path families ensures that the limiting Brownian net $\NN_0$ (constructed over dynamic time interval $[0,1/m_0)$)
must have a {\it separation point} $(x,t)$ with an incoming 
forward path of age at least $\tilde{\epsilon} = \tilde{\epsilon}(\epsilon/2)$.
 Further,  diameter of the maximal excursion set generated at $(x,t)$ is bigger than $\epsilon$. 
 This implies that the set $\Xi^{\epsilon/2}_{[0, 1/m_0)}$ must be non-empty. 
  Hence, we have 
 $$
 \{ \Xi^{\epsilon/2}_{0, n} \neq \emptyset \text{ for infinitely many }n\}
\subseteq \{ \Xi^{\epsilon/2}_0 \neq \emptyset \}.
 $$
 On the other hand, on the event $\{ \Xi^{\epsilon/2}_0 \neq \emptyset \}$
 consider a separating point $(x,t) \in \Xi^{\epsilon/2}_0$. 
 There must exist skeletal rightmost and leftmost Brownian paths $\pi^r, \pi^l \in \NN$
  passing through $(x,t)$ and forming the right boundary and left boundary 
  of the maximal excursion set generated at $(x,t)$ respectively. 
  Further, their first meeting time must be smaller than $t $. 
Almost sure convergence of rightmost and leftmost path families 
together with their first meeting times ensure that there 
must be sequences of paths $\{\pi^n_l : n \in \N\}$ and 
 $\{\pi^n_r : n \in \N\}$ approximating paths $\pi^r$ and $\pi^l$  respectively
 and their first meeting time converges to that of $\pi^r$ and $\pi^l$ which 
 is smaller than $t$. 
 
After intersection, scaled discrete web paths (rightmost and leftmost) 
continue to move together till they encounter a separating point 
(for path family $\NN_{0,n}$). As $\{\pi^n_l : n \in \N\}$ and  
$\{\pi^n_r : n \in \N\}$ approximate paths $\pi^r$ and $\pi^l$ 
with an $\epsilon$ fat maximal excursion set in between them, after the first meeting time
 there must be a separating point 
$(x_n, t_n) \in S_n(\Z^2_{\text{even}})$ on their trajectory with maximal excursion set 
of diameter $\epsilon$ at least. More precisely, 
for infinitely many $n$'s there exist $(x_n, t_n)$  such that
$$
\pi^n_r(t_n) = \pi^n_l(t_n) = x_n \text{ and }(x_n, t_n) \in D^{\epsilon/2}_{0,n}.
$$ 
 To complete the proof we need to ensure that $(x_n, t_n) \in 
 A^{\tilde{\epsilon}/2}_{0,n}$ as well. 
 This follows from the fact that the limiting net has incoming path(s) at $x,t$
 of age $\tilde{\epsilon}(\epsilon/2)$ at least. For large $n$, there 
 must be approximating incoming path(s) in ${\cal N}_{0,n}$ passing through $(x_n, t_n)$. 
This completes the proof. 
\end{proof}
Lemma \ref{lem:Lambda_Lambda^n_set_comparison} suggests that for all
$0 \leq j \leq m_0 - 1$, on the event $\{\Lambda^{\epsilon/2}_j = \emptyset \}$, 
we must have that the r.v. $\sup\{ n \in \N : \Lambda^{\epsilon/2}_{j, n} \neq \emptyset \}$ 
is a.s. finite. This observation is used to prove the next lemma. 
\begin{lemma}
\label{lem:Xi_n_Fluctuation_bd}
For each $\gamma \in (0,1)$, there exist $m_0 = m_0(\epsilon,\gamma),
n^\prime  = n^\prime(\epsilon, \gamma)  \in \N$  such that 
\begin{equation}
\P \bigl( \Lambda^{\epsilon/2}_{[0,1/m_0), n} \neq \emptyset 
\text{ for some }n \geq n^\prime \bigr ) < \gamma.
\end{equation}
\end{lemma}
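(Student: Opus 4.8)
The plan is to split the event according to whether the limiting (continuum) set $\Lambda^{\epsilon/2}_{[0,1/m_0)} = \Lambda^{\epsilon/2}_0$ is empty, choosing the two parameters in the correct order: first $m_0$, then $n'$. Recall from the discussion following Lemma \ref{lem:SeparPtContains} that $\Lambda^{\epsilon/2} = \Lambda^{\epsilon/2}_{[0,1]}$ is finite a.s., and that $\Lambda^{\epsilon/2}_{[0,1/m_0)} \subseteq \Lambda^{\epsilon/2}$ a.s. Let $Z := \min\{ s_1, |s_1 - s_2|, 1 - s_2 : s_1, s_2 \in \Lambda^{\epsilon/2}\}$ be the a.s.\ positive random variable introduced before Proposition \ref{prop:RCLL_Tightness_1}. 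Since every clock ring in $\Lambda^{\epsilon/2}_0$ is a time in $[0,1/m_0)$ belonging to $\Lambda^{\epsilon/2}$, on the event $\{Z > 1/m_0\}$ the set $\Lambda^{\epsilon/2}_0$ is empty, whence $\P(\Lambda^{\epsilon/2}_0 \neq \emptyset) \leq \P(Z \leq 1/m_0)$. As $Z > 0$ a.s.\ we have $\P(Z \leq 1/m_0) \to 0$ when $m_0 \to \infty$, so I would first fix $m_0 = m_0(\epsilon,\gamma) \in \N$ so large that $\P(\Lambda^{\epsilon/2}_0 \neq \emptyset) < \gamma/2$.

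With $m_0$ now frozen, I would invoke Lemma \ref{lem:Lambda_Lambda^n_set_comparison} with $j = 0$, which gives the event identity $\{\Lambda^{\epsilon/2}_0 \neq \emptyset\} = \{\Lambda^{\epsilon/2}_{0,n} \neq \emptyset \text{ for infinitely many } n\}$. Passing to complements, on the event $\{\Lambda^{\epsilon/2}_0 = \emptyset\}$ the discrete sets $\Lambda^{\epsilon/2}_{0,n}$ are non-empty for only finitely many $n$. Equivalently, the random variable $N^\ast := \sup\{ n \in \N : \Lambda^{\epsilon/2}_{0,n} \neq \emptyset\}$ (with $\sup \emptyset := 0$) satisfies $N^\ast \mathbf{1}_{\{\Lambda^{\epsilon/2}_0 = \emptyset\}} < \infty$ a.s. Hence the events $\{N^\ast \geq n'\} \cap \{\Lambda^{\epsilon/2}_0 = \emptyset\}$ decrease to a null set as $n' \to \infty$, so I may choose $n' = n'(\epsilon,\gamma) \in \N$ with $\P(\{N^\ast \geq n'\} \cap \{\Lambda^{\epsilon/2}_0 = \emptyset\}) < \gamma/2$.

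It then remains only to combine the two bounds. Writing $\Lambda^{\epsilon/2}_{[0,1/m_0),n} = \Lambda^{\epsilon/2}_{0,n}$ and noting the identity $\{\exists\, n \geq n' : \Lambda^{\epsilon/2}_{0,n} \neq \emptyset\} \cap \{\Lambda^{\epsilon/2}_0 = \emptyset\} = \{N^\ast \geq n'\} \cap \{\Lambda^{\epsilon/2}_0 = \emptyset\}$, a decomposition over $\{\Lambda^{\epsilon/2}_0 = \emptyset\}$ and its complement yields
$$
\P\bigl(\Lambda^{\epsilon/2}_{[0,1/m_0),n} \neq \emptyset \text{ for some } n \geq n'\bigr) \leq \P(\Lambda^{\epsilon/2}_0 \neq \emptyset) + \P\bigl(\{N^\ast \geq n'\} \cap \{\Lambda^{\epsilon/2}_0 = \emptyset\}\bigr) < \tfrac{\gamma}{2} + \tfrac{\gamma}{2} = \gamma .
$$

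The point requiring the most care is the order of quantifiers rather than any hard estimate: both the limiting event $\{\Lambda^{\epsilon/2}_0 = \emptyset\}$ and the threshold $N^\ast$ depend on $m_0$ through the interval $[0,1/m_0)$, so $m_0$ must be committed to before $n'$ is selected. Only after $m_0$ is fixed does Lemma \ref{lem:Lambda_Lambda^n_set_comparison} deliver the a.s.\ finiteness of $N^\ast$ on $\{\Lambda^{\epsilon/2}_0 = \emptyset\}$ that powers the choice of $n'$. The two genuine inputs are the a.s.\ finiteness of $\Lambda^{\epsilon/2}$ (for the $m_0$ step) and Lemma \ref{lem:Lambda_Lambda^n_set_comparison} (for the $n'$ step); the rest is a routine two-event decomposition.
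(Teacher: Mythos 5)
Your proposal is correct and follows essentially the same route as the paper: fix $m_0$ first using the a.s.\ finiteness of $\Lambda^{\epsilon/2}$ so that $\P(\Lambda^{\epsilon/2}_{[0,1/m_0)} \neq \emptyset) < \gamma/2$, then use Lemma \ref{lem:Lambda_Lambda^n_set_comparison} to conclude that $X := \sup\{n : \Lambda^{\epsilon/2}_{[0,1/m_0),n} \neq \emptyset\}$ is infinite only on the event $\{\Lambda^{\epsilon/2}_{[0,1/m_0)} \neq \emptyset\}$, and finally pick $n'$ by continuity from above. Your splitting over $\{\Lambda^{\epsilon/2}_0 = \emptyset\}$ and its complement is just a rephrasing of the paper's bound $\P(X = \infty) \leq \P(\Lambda^{\epsilon/2}_{[0,1/m_0)} \neq \emptyset) < \gamma/2$ followed by the choice of $n'$.
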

\begin{proof} 
We note that the random set  $\Lambda^{\epsilon/2}$ is a.s. finite. 
Further as $m \to \infty$, the set  $\Lambda^{\epsilon/2}_{[0,1/m)}$ 
{\it monotonically decreases}
to empty set. Hence, we can choose $m_0 \in \N$ such that 
\begin{equation*}
\P( \Lambda^{\epsilon/2}_{[0,1/m_0)} \neq \emptyset ) < \gamma/2,
\end{equation*}
where $\Lambda^{\epsilon/2}_{[0,1/m_0)}$ is defined 
considering the DyBW process over dynamic time interval $[0,1/m_0)$. 
 We define $X$ as the non-negative integer valued random variable
\begin{align*}
X := \sup\{ n \in \N : \Lambda^{\epsilon/2}_{[0,1/m_0),n} \neq \emptyset \}.
\end{align*} 
By the previous lemma  on the event $\{ X = \infty \}$, the set 
$\Lambda^{\epsilon/2}_{[0, 1/m_0)}$ must be non-empty. This gives us that
\begin{align*}
\P(X = \infty )  \leq \P( \Lambda^{\epsilon/2}_{[0,1/m_0)} \neq \emptyset ) < \gamma/2.
\end{align*}
Hence, we can choose $n^\prime \in \N$ such that 
 $\P( X \leq n^\prime ) > 1 - \gamma$. 
This completes the proof.
\end{proof}
 Now we are ready to prove Proposition \ref{prop:RCLL_Tightness_1}. 

\noindent {\bf Proof of Proposition \ref{prop:RCLL_Tightness_1}:} 
As discussed earlier, it suffices to prove  Equation (\ref{eq:Fluctuation_X_n_1}).
Fix $\epsilon > 0$ and $\gamma \in (0, 1)$. Remember that we need to find $ m_0  = 
m_0(\epsilon, \gamma)> 0 $ and $n_0  = n_0(\epsilon, \gamma)\in \N$ such that 
Equation (\ref{eq:Fluctuation_X_n_1}) holds. 

For $\delta > 0$ we define the event $A(\delta)$ as 
\begin{align}
\label{def:Event_A_1}
A(\delta) &:= \bigl\{ \min\{|s_1 -s_2|\wedge s_1\wedge (1 - s_2) : 
s_1, s_2 \in \Lambda^{\epsilon/2}\} > \delta \bigr\}.
\end{align}
We first choose $m_1 = m_1(\epsilon, \gamma)\in \N$ such that 
$\P( A(2/m_1)^c ) < \gamma/8$.
Recall that the random set $\Lambda^{\epsilon/2}$ consists of 
finitely many distinct points a.s. and hence, such a choice of $m_1$ is always possible.
Further, on the event $A(2/m_1)$ we must have 
$$
\# \Lambda^{\epsilon/2} \leq \lfloor m_1/2\rfloor + 1 < m_1 . 
$$  
Next, using Lemma \ref{lem:Xi_n_Fluctuation_bd} we choose $m_0 \geq m_1$ and 
$n_3 = n^\prime(\epsilon/2 , \gamma/(8 m_1))$ 
such that   
\begin{align}
\label{eq:m_0_n_3}
\P\bigl( \# \Lambda^{\epsilon/2}_{0,n}
= \# \Lambda^{\epsilon/2}_{[0,1/m_0),n} \geq 1 
\text{ for some }n \geq n_3 \bigr ) < \gamma/(8 m_1).
\end{align}
As $m_0 \geq m_1$, we observe that on the event $A(2/m_1)$, any 
interval of the form $[j/m_0, (j+1)/m_0)$ can have at most one element from the set 
$\Lambda^{\epsilon/2}$ and for any such interval, the neighbouring interval(s) cannot have any element from the set $\Lambda^{\epsilon/2}$.

For $0 \leq j \leq m_0 -1$, let $N^j = N^j(\epsilon, m_0)$ denote
 a non-negative integer valued random variable defined as
 \begin{align*}
 N^j := \mathbf{1}_{\{\Lambda^{\epsilon/2}_{j} =  \emptyset\}} 
 \sup\{ n \in \N : \Lambda^{\epsilon/2}_{j,n} \neq \emptyset \}.
\end{align*}  
Lemma \ref{lem:Lambda_Lambda^n_set_comparison} ensures that 
the non-negative r.v. $N^j$ is a.s. finite. 
Using the collection $\{N^j : 0 \leq j \leq m_0 -1\}$ we define 
\begin{align*}
N = N(\epsilon, m_0) := \max\{N^j : 0 \leq j \leq m_0 - 1\}, 
\end{align*}
which is an a.s. finite non-negative integer valued r.v. as well.
 We choose $n_4 \geq n_3  $ so that 
\begin{align}
\label{eq:m_0}
\P( N \geq n_4) < \gamma/8.
\end{align}
We will show that with the above choice of $m_0$ and with $n_0 = n_4$
Equation (\ref{eq:RCLL_Tight_1}) holds. We define the event $E$ as  
\begin{align*}
E := \bigcap_{n \geq n_0}\{ \# \Lambda^{\epsilon/2}_{j,n} \leq \# 
\Lambda^{\epsilon/2}_{j} \text{ for all } 0 \leq j \leq m_0 - 1 \}.
\end{align*} 
With the choice of $m_0$ and $n_0$ are as before, we observe the following inclusion of events:
$$
\{ F^{m_0}_n \text{ for all }n \geq n_0\} \subseteq A(2/m_1)\cap E.
$$
Hence, in order to prove (\ref{eq:RCLL_Tight_1}), it is enough to show that 
\begin{align}
\label{eq:ComplementEvent_Ineq}
2\P( A(2/m_1)^c) + \P(N \geq n_0) + \P(E^c \cap  (N < n_0)\cap A(2/m_1)) < \gamma.
\end{align}
The choice of $m_1$ and $n_0$ ensure that it suffices to show that
$$
\P(E^c \cap  (N < n_0)\cap A(2/m_1)) < \gamma/2.
$$   

Let $\Gamma$ denote the collection of all possible subsets of the set 
$\{1, \cdots, m_0-2\}$ with {\it at most} $m_1$ many elements such that 
these subsets do not have consecutive elements. 
In other words, 
\begin{align*}
\Gamma := \bigl \{ & \lambda\subseteq \{1, \cdots, m_0 - 2\} : \# \lambda \leq m_1 
\text{ and }\nexists j \in \{1, \cdots, m_0 - 2\} \text{ such that both }\\
 & j \text{ and } j+1 \text{ are in } \lambda \bigr \}.
\end{align*}
For $\lambda \in \Gamma$ we define the event $G_\lambda$ as 
\begin{align*}
G_\lambda := & \bigl( \cap_{j \in \lambda}( \Lambda^{\epsilon/2}_{j,n}  \neq \emptyset 
\text{ for infinitely many }n) \bigr)
 \bigcap \bigl( \cap_{j \notin \lambda}( \Lambda^{\epsilon/2}_{j,n} = \emptyset 
 \text{ for all }n \geq n_0 ) \bigr).
\end{align*}
It is not difficult to see that for $\lambda, \lambda^\prime \in \Lambda $
with $\lambda \neq \lambda^\prime$, the events $G_\lambda$ and $G_{\lambda^\prime}$ are {\it disjoint}. 
By definition, on the event $A(2/m_1)$ we have $\Lambda^{\epsilon/2}_0
 = \Lambda^{\epsilon/2}_{m_0 - 1} = \emptyset$.  
Hence, Lemma \ref{lem:Lambda_Lambda^n_set_comparison} gives us the following inclusion relation
$$
\bigl(  A(2/m_1) \cap (N < n_0) \bigr ) \subseteq \cup_{\lambda \in \Gamma} G_\lambda.
$$
Therefore, we can express $\P\bigl( E^c \cap (N < n_0) \cap A(2/m_1) \bigr)$ as 
\begin{align}
\label{eq:Tightness_1}
& \P(E^c \cap (N < n_0) \cap A(2/m_1))\nonumber\\
& \leq  \P \bigl( E^c \cap (\cup_{\lambda \in \Lambda} G_\lambda) \bigr )\nonumber\\
& =  \P \bigl( \cup_{\lambda \in \Lambda} (E^c \cap  G_\lambda) \bigr )\nonumber\\
& = \sum_{\lambda \in \Gamma} \P \bigl( E^c \mid  G_\lambda \bigr )\P(G_\lambda)\nonumber\\
& \leq \sum_{\lambda \in \Gamma} \P \bigl( \cup_{ j \in \lambda }  
\bigl( \# \Lambda^{\epsilon/2}_{j,n} \geq 2 \text{ for some }n \geq n_0 \bigr ) \mid  G_\lambda \bigr )
\P(G_\lambda)
\end{align}   
The last step follows from the fact that given that the event 
$G_\lambda$ has occurred for some $\lambda \in \Lambda$,  
the only way that the complement event $E^c$ can occur is due to 
the set $\Lambda^{\epsilon/2}_{j,n}$ for some  $j \in \lambda$ and for some $ n \geq n_0$.
Further, Lemma \ref{lem:Lambda_Lambda^n_set_comparison} ensures that 
on the event $G_\lambda$, the set $\Lambda^{\epsilon/2}_{j}$ must be non-empty.
Hence, for any $\lambda \in \Lambda$ conditional that the event $G_\lambda$ has occurred, 
we have the following event inclusion
$$
E^c \subseteq \bigl( \# \Lambda^{\epsilon/2}_{j,n} \geq 2 \text{ for some }n \geq n_0 \bigr ) . 
$$
This justifies the last inequality in (\ref{eq:Tightness_1}).
We  can now write (\ref{eq:Tightness_1}) as
\begin{align}
\label{eq:Tightness_2}
& \sum_{\lambda \in \Gamma} \P \Bigl( \cup_{ j \in \lambda }  
\bigl( \# \Lambda^{\epsilon/2}_{j,n} \geq 2 \text{ for some }n \geq n_0 \bigr ) \mid  G_\lambda \Bigr )
\P(G_\lambda)\nonumber\\
\leq  & \sum_{\lambda \in \Gamma} m_1 \P\Bigl ( \bigl ( \# \Lambda^{\epsilon/2}_{j,n} 
 \geq 2 \text{ for some }n \geq n_0 \bigr )
\mid G_\lambda \Bigr ) \P(G_\lambda). 
\end{align}
The last inequality follows from application of union bound and 
the fact that $\#\lambda \leq m_1$ for all $\lambda \in \Lambda$.
We observe that the collection of random variables  $\{\Lambda^{\epsilon/2}_{j,n} : n \in 
\N \}$ depends on the evolution of the i.i.d. Markov processes $\{I^n_{(x,t)}(\cdot) : (x,t)
\in \Z^2_{\text{even}}\}_{n \in \N}$ over dynamic time domain $[j/m_0, (j+1)/m_0)$.

Fix $n \in \N$. For $0 \leq j \leq m_0 - 1$ let $X^j_n$ denote the 
non-negative integer valued random variable defined as $X^j_n := \# \Lambda^{\epsilon/2}_{j,n}$.
For $0 \leq j \leq m_0 - 1$ we define 
$$
Z^j := (X^j_1, X^j_2, \cdots ) 
$$ 
which takes vales in $(\N \cup \{0\})^\N$. 
The next lemma shows that the set $\{ Z^j : 0 \leq j \leq m_0 - 1\}$
forms an i.i.d. collection of random variables taking vales in $(\N \cup \{0\})^\N$.   
\begin{lemma}
\label{lemma:iid_1}
$\{ Z^j : 0 \leq j \leq m_0 - 1\}$ forms an i.i.d. collection of random variables.
\end{lemma}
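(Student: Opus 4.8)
The plan is to realize each $Z^j$ as the image, under one fixed measurable map, of the switching randomness confined to the $j$-th dynamic window, and then to read off the i.i.d. conclusion from the independent-increments structure of the driving clocks. For a fixed scale $n$ and lattice point $(x,t)$ the evolution is governed by the Poisson clock $\{N^n_{(x,t)}(\tau)\}$ together with the i.i.d. direction variables $\{I^m_{(x,t)}\}$. For each $0\le j\le m_0-1$ I would collect into a field $\mathcal{G}_j$ the increments of every clock $N^n_{(x,t)}$ over $[j/m_0,(j+1)/m_0)$ (over all $n$ and all $(x,t)$) together with the directions sampled at the rings that fall inside that window. Because the clocks are Poisson processes and the windows are disjoint, the ring counts and ring times carried by $\mathcal{G}_0,\dots,\mathcal{G}_{m_0-1}$ are mutually independent; and because every window has the same length $1/m_0$ and the dynamics is homogeneous in dynamic time, these fields are identically distributed.

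The bulk of the argument is to show that $Z^j$ is $\sigma(\mathcal{G}_j)$-measurable. Here I would use that $\NN_{j,n}$ is by construction the closure of the paths built from arrows observed at dynamic times inside $[j/m_0,(j+1)/m_0)$, so that every quantity derived from it---namely $D^{\epsilon/2}_{j,n}$, $A^{\tilde\epsilon}_{j,n}$, their intersection $\Xi^{\epsilon/2}_{j,n}$, and finally the attached clock-ring set $\Lambda^{\epsilon/2}_{j,n}$---is a functional of the arrows present during that window. A lattice point becomes a branching (separation) point of $\NN_{j,n}$ exactly when its clock rings inside the window and both outgoing directions are realized there, and the diameter and age of the resulting excursion are determined by the collection of all such window-$j$ branchings. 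Granting this, $X^j_n=\#\Lambda^{\epsilon/2}_{j,n}$ is obtained from $\mathcal{G}_j$ by one and the same rule for every $n$, so $Z^j=(X^j_n)_{n\ge 1}$ is $\sigma(\mathcal{G}_j)$-measurable.

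The step I expect to be the main obstacle is controlling the arrow configuration inherited at the left endpoint $\tau=j/m_0$---the directions held throughout the window by those lattice points whose clock does not ring inside it. These ``frozen'' arrows enter $\NN_{j,n}$ and, viewed naively, are correlated across windows: at a window boundary the direction in force is indexed by the cumulative clock count, so the block of $\{I^m_{(x,t)}\}$ used by window $j$ shares its first index with the last index used by window $j-1$. Thus, while the ring counts and ring times split cleanly across windows, the sampled directions exhibit a boundary overlap. To close this gap I would argue that the attributes screened by $\Lambda^{\epsilon/2}_{j,n}$---separation with diameter above $\epsilon/2$ and age above $\tilde\epsilon$---are insensitive to these shared boundary directions, or, equivalently, re-represent the within-window direction process by a fresh independent refresh at each endpoint and verify, using stationarity and the absence of fixed clock atoms at the deterministic times $j/m_0$, that this substitution does not alter the joint law of $(Z^0,\dots,Z^{m_0-1})$.

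Once the fields $\{\mathcal{G}_j\}$ are shown to be independent and identically distributed and each $Z^j$ is exhibited as $\Phi(\mathcal{G}_j)$ for one fixed measurable $\Phi$, the collection $\{Z^j:0\le j\le m_0-1\}$ is i.i.d., which proves the lemma.
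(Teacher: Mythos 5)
Your overall strategy is the same as the paper's: factor the switching randomness window by window, get independence from the independent increments of the Poisson clocks together with the fresh direction marks, and get identical distribution from stationarity. The paper runs this as a conditioning argument: it introduces the $\sigma$-field $\mathcal{G}_{m_0-1}$ generated by all clock information on $[0,(m_0-1)/m_0)$ and all directions with index up to $N^n_{(x,t)}((m_0-1)/m_0)$, pulls out $I_0(B^0),\dots,I_{m_0-2}(B^{m_0-2})$ as $\mathcal{G}_{m_0-1}$-measurable, asserts $\E\bigl(I_{m_0-1}(B^{m_0-1})\mid \mathcal{G}_{m_0-1}\bigr)=\P(Z^0\in B^{m_0-1})$ ``by stationarity,'' and iterates. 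The difference is that you stop to flag the step on which this assertion rests, whereas the paper asserts it without further argument.

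The gap you flag is genuine, and the repairs you sketch would not close it. First, $Z^j$ is not measurable with respect to the window-$j$ ring data alone: $\NN_{j,n}$ is built from \emph{every} arrow observed during $[j/m_0,(j+1)/m_0)$, and at switching rate $1/\sqrt{n}$ all but a vanishing fraction of lattice points never ring inside the window, so almost all of $\NN_{j,n}$ consists of frozen arrows, i.e.\ of the configuration inherited at time $j/m_0$, which is a function of the earlier windows' randomness (and is literally part of window $j-1$'s observed data). Second, ``insensitivity'' of the screening attributes to these shared arrows cannot hold: the diameter $D^{(x,t)}$ and age $A^{(x,t)}$ that define $\Xi^{\epsilon/2}_{j,n}$ are computed from net paths that run almost entirely along frozen arrows, so they are functions of precisely the data shared across windows. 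Third, the substitution argument (an independent refresh of the configuration at each window boundary) changes the joint law of the windows' data; verifying that it does not change the law of $(Z^0,\dots,Z^{m_0-1})$ is equivalent to the independence you are trying to prove, so it begs the question. Note finally that the same unresolved point sits inside the paper's own proof: $\E\bigl(I_{m_0-1}(B^{m_0-1})\mid \mathcal{G}_{m_0-1}\bigr)$ is a function of the configuration at dynamic time $(m_0-1)/m_0$, which is $\mathcal{G}_{m_0-1}$-measurable and on which $Z^{m_0-1}$ genuinely depends; stationarity yields only that the \emph{expectation} of this conditional probability equals $\P(Z^0\in B^{m_0-1})$, not that it is almost surely constant. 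So you have correctly located the crux of the lemma, but your proposal, like the paper's two-line justification, leaves that crux unproven.
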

\begin{proof}
Because of stationarity of our process, it follows that for fixed $n$
the random variables $X^j_n$'s for $0 \leq j \leq m_0 - 1$ are identically distributed.
Fix Borel subsets $B^0, . . . , B^{m_0 - 1}$ in appropriate space. 
Let $I_j(B^j)$ be the indicator random variable of the event 
$\{ Z^j \in B^j\}$. 
For $0 \leq j \leq m_0 - 1$, set the $\sigma$-field 
$$
{\cal G}_{j} := \sigma \bigl ( (N^n_{(x,t)}(\tau), I^\ell_{(x,t)}) : (x,t) \in 
\Z^2_{\text{even}}, \tau \in [0,j/m_0), n \geq 1 , 0 \leq \ell \leq N^n_{(x,t)}(j/m_0)\bigr ).
$$
Now we have
\begin{align*}
\P( Z^j \in B^j \text{ for } 0 \leq j \leq m_0 - 1)
& = \E(\Pi_{j=0}^{m_0-1} I_j(B^j))\\
& = \E \Bigl(\E(\Pi_{j=0}^{m_0-1} I_j(B^j) \mid {\cal G}_{m_0 - 1})\Bigr)\\
& = \E \Bigl( \Pi_{j=0}^{m_0-2} I_j(B^j) \E( I_{m_0 - 1}(B^{m_0 - 1}) \mid {\cal G}_{m_0 - 1})\Bigr)\\
& = \E \Bigl( \Pi_{j=0}^{m_0-2} I_j(B^j) \Bigr) \P( Z^{0} \in B^{m_0 - 1}),
\end{align*} 
where the penultimate step follows from the fact that the r.v. $I_j(B^j)$ is measurable 
w.r.t. the $\sigma$-field ${\cal G}^{m_0 - 1}$. The last equality follows from stationarity 
nature of our model which ensures equality of distribution of $Z^j$'s.
Applying the same argument repetitively completes the proof. 
\end{proof}
Lemma \ref{lemma:iid_1} gives us that the event $( \# \Lambda^{\epsilon/2}_{j,n} 
 \geq 2 \text{ for some }n \geq n_0 )$ does not depend on the collection
 $\{ Z^i : 0 \leq i \leq m_0 - 1, i \neq j \}$. Note that the event 
 $(\# \Lambda^{\epsilon/2}_{i,n} \geq 1\text{ for infinitely }n )$ is measurable 
 w.r.t. the $\sigma$-field $\sigma(Z^i)$. 
Hence, Equation (\ref{eq:Tightness_2}) becomes
\begin{align}
\label{eq:Tightness_3}
& \sum_{\lambda \in \Gamma} m_1 \P\Bigl ( \bigl ( \# \Lambda^{\epsilon/2}_{j,n} 
 \geq 2 \text{ for some }n \geq n_0 \bigr )
\mid G_\lambda \Bigr ) \P(G_\lambda) \nonumber \\
\leq & \sum_{\lambda \in \Gamma}m_1 \P\Bigl ( \bigl (  
   \# \Lambda^{\epsilon/2}_{j,n} \geq 2 \text{ for some }n \geq n_0 \bigr ) 
   \mid ( \# \Lambda^{\epsilon/2}_{j,n} \geq 1\text{ for infinitely }n ) \Bigr ) \P(G_\lambda)\nonumber\\
   \leq & \sum_{\lambda \in \Gamma}m_1 \P\Bigl ( \bigl (  
   \# \Lambda^{\epsilon/2}_{0,n} \geq 2 \text{ for some }n \geq n_0 \bigr ) 
   \mid ( \# \Lambda^{\epsilon/2}_{0,n} \geq 1\text{ for infinitely }n ) \Bigr ) 
   \P(G_\lambda) .
\end{align}
The last step follows from stationarity of our model.
We recall that for any $s>0$ the set $\Lambda^{\epsilon/2}_{[0,s],n}$ is 
defined based on the scaled DyDW (dynamic discrete web) 
restricted over dynamic time interval $[0,s]$ only.  
We define, 
\begin{align*}
 \sigma^n := (\inf\{ s \in [0, 1/m_0) : \# \Lambda^{\epsilon/2}_{[0,s],n}
  \geq 1 \})\wedge 1/m_0.
\end{align*}
We observe that for every $n$, the r.v. $\sigma^n$ is a stopping time w.r.t. the filtration 
$$
\Bigl \{ {\cal G}^s_{0,n} := \sigma \bigl ( (N^n_{(x,t)}(\tau), I^\ell_{(x,t)}) : (x,t) \in 
\Z^2_{\text{even}}, \tau \in [0,s],  0 \leq \ell \leq N^n_{(x,t)}(s)\bigr ) : s \in [0, 1/m_0) \Bigr \}.
$$
The strong Markov property of the scaled process (w.r.t. dynamic time)
allows us to obtain 
\begin{align}
\label{eq:Tightness_4}
& \P \bigl ( (\# \Lambda^{\epsilon/2}_{0,n} \geq 2 \text{ for some }n \geq n_0)
 \mid (\# \Lambda^{\epsilon/2}_{0,n} \geq 1 \text{ for infinitely many }n) \bigr )\nonumber\\
 = & \P \bigl ( (\# \Lambda^{\epsilon/2}_{(\sigma^n, 1/m_0),n} \geq 1 \text{ for some }
 n \geq n_0)   \mid (\sigma^n  < 1/m_0 \text{ for infinitely many }n) \bigr )\nonumber\\
 = & 
 \P \Bigl (  \bigcup_{n \geq n_0} \bigl( \# \Lambda^{\epsilon/2}_{(0,1/m_0 - \sigma^n),n}
  \geq 1 \mid {\cal G}^{\sigma^n}_{0,n} \bigr ) \Bigr )
  \end{align} 
As $\sigma^n$ is a stopping time for the scaled DyDW process, applying Lemma \ref{lem:Xi_n_Fluctuation_bd} to (\ref{eq:Tightness_4}) we obtain
$$
\P \Bigl (  \bigcup_{n \geq n_0} \bigl( \# \Lambda^{\epsilon/2}_{(0,1/m_0 - \sigma^n),n}
  \geq 1 \mid {\cal G}^{\sigma^n}_{0,n} \bigr ) \Bigr )
  \leq \P \bigl ( \# \Lambda^{\epsilon/2}_{[0,1/m_0 )}
  \geq 1 \text{ for some }n \geq n_0 \bigr )  < \gamma/(8 m_1).
$$
Finally, putting this estimate in (\ref{eq:Tightness_2}) gives us that 
$$
\P(E^c \cap (N < n_0)\cap A(2/m_1)) < \gamma/8 
\sum_{\lambda \in \Gamma}\P(G_\lambda ) = 
\gamma/8 \P( \cup_{\lambda \in \Gamma} G_\lambda ) \leq \gamma/8.
$$ 
As discussed earlier, this completes the proof of (\ref{eq:RCLL_Tight_1}) and 
thereby proves Proposition \ref{prop:RCLL_Tightness_1}. 
\qed

In order to complete our proof we need to prove Proposition \ref{prop:JtConv_Nets}.  
To prove Proposition \ref{prop:JtConv_Nets} we need 
the following notion of hopping at intersection times which is taken from \cite{SS08}.

For any collection of paths ${\cal K} \subset \Pi$ we let ${\cal H}_{\text{int}}({\cal K})$ 
denote the smallest set of paths containing ${\cal K}$ that is 
closed under hopping at intersection times, that is,
${\cal H}_{\text{int}}({\cal K})$ is the set of all paths $\pi \in \Pi$ of the form
\begin{equation}
\label{def:Hopping}
\pi = \cup^m_{i = 1} \{(\pi_i(t), t) : t \in [t_{k-1}, t_k]\},
\end{equation}
where $\pi_1, \cdots , \pi_m \in {\cal K}$, $\sigma_{\pi_1}
= t_0 < \cdots < t_m = \infty $, and $t_i$ is an intersection time
of $\pi_i$ and $\pi_{i+1}$ for each $i = 1, \cdots , m - 1$.

The following corollary gives us a way to identify the corresponding Brownian net corresponding 
to the DyBW $\{\WW(\tau) : \tau \in [0,1]\}$.
\begin{corollary}
\label{cor:Correspond_Net}
Let $\{ \WW(\tau) : \tau \in [0,1] \}$ denote the DyBW process with $\NN$ denoting 
the corresponding net. Let ${\cal K} = {\cal K}(\WW)$ denote the collection of paths
\begin{align}
\label{def:DyBW_AllPaths}
{\cal K} := \{ \pi : \pi \in \WW(\tau) \text{ for some } \tau \in [0,1]\cap \Q\} \subseteq \Pi,
\end{align} 
 consisting of DyBW paths at rational dynamic time points.
 Let $\NN^\prime$ be a standard Brownian net defined on the same probability 
 space such that ${\cal K}(\WW) \subset \NN^\prime$ a.s.
Then we must have $\NN^\prime = \NN$ a.s., i.e., $\NN^\prime$ must be the corresponding net only.  
\end{corollary}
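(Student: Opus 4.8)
The plan is to show that the hopping closure of the rational-time family ${\cal K}$ already reconstitutes the full corresponding net, so that $\NN\subseteq\NN'$, and then to upgrade this inclusion to an equality by a first-moment comparison of the extremal paths. First I would record the two easy inclusions. Since $\pi\in\WW(\tau)$ for some $\tau$ forces $\pi\in\NN$ (the observation used at the start of the proof of Lemma \ref{lem:PathAge}), we have ${\cal K}\subseteq\NN$, and by hypothesis ${\cal K}\subseteq\NN'$. A standard Brownian net is compact in ${\cal H}$ and closed under hopping at intersection times (Sun--Swart \cite{SS08}); hence ${\cal H}_{\text{int}}({\cal K})\subseteq\NN'$ and, taking closures, $\overline{{\cal H}_{\text{int}}({\cal K})}\subseteq\NN'$. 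The same reasoning gives $\overline{{\cal H}_{\text{int}}({\cal K})}\subseteq\NN$.

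The main step is to see that $\overline{{\cal H}_{\text{int}}({\cal K})}$ recovers the left and right webs of $\NN$. By the hopping characterisation of the Brownian net \cite{SS08}, $\NN=\overline{{\cal H}_{\text{int}}(\WW^l\cup\WW^r)}$, where $\WW^l$ (resp. $\WW^r$) is the left (resp. right) web generated by the left-most paths $l^{(x,t)}$ (resp. right-most paths $r^{(x,t)}$), $(x,t)\in\Q^2$. Thus it suffices to produce, for each rational $(x,t)$, the path $l^{(x,t)}$ as a $d_\Pi$-limit of elements of ${\cal H}_{\text{int}}({\cal K})$ (the argument for $r^{(x,t)}$ being symmetric). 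The left-most net path follows the left outgoing branch at every marked $(1,2)$ point on its trajectory; at such a point the \emph{continuing} DyBW branch is the left branch for a set of rational dynamic times $\tau$ (the switching clock toggles the branch), and at that $(1,2)$ point two DyBW paths meet, so it is an intersection time at which one may hop in the sense of \eqref{def:Hopping}. Discretising the trajectory of $l^{(x,t)}$ by the locally finite set of separation points of diameter exceeding $\epsilon$ (local finiteness of $S^{T_2}_{T_1}$, Proposition 7.9 of \cite{NRS10}, as used in Lemma \ref{lem:SeparPtContains}), I would splice finitely many rational-time DyBW pieces, each taking the left branch at the corresponding large separation point, into a single path of ${\cal H}_{\text{int}}({\cal K})$ that agrees with $l^{(x,t)}$ outside finitely many excursions of diameter at most $\epsilon$; letting $\epsilon\downarrow 0$ and using equicontinuity of net paths yields $l^{(x,t)}\in\overline{{\cal H}_{\text{int}}({\cal K})}$. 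The hard part is exactly this splicing: one must control the accumulation of the infinitely many \emph{small} separation points along the trajectory so that the spliced approximants converge in $d_\Pi$, the point being that each hop is purely local (only the branch at one large separation point need be corrected), so no single dynamic time realising all the correct branches simultaneously is required. Granting this, $\WW^l\cup\WW^r\subseteq\overline{{\cal H}_{\text{int}}({\cal K})}\subseteq\NN'$, and since $\NN'$ is compact and hopping-closed, $\NN=\overline{{\cal H}_{\text{int}}(\WW^l\cup\WW^r)}\subseteq\NN'$.

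Finally I would upgrade $\NN\subseteq\NN'$ to equality by a first-moment argument. Fix deterministic $(x,t)\in\Q^2$ and let $l'^{(x,t)}$ be the left-most path of $\NN'$ from $(x,t)$. Since $l^{(x,t)}\in\NN\subseteq\NN'$ is an $\NN'$-path from $(x,t)$ and $l'^{(x,t)}$ is the pointwise-minimal such path, we get $l'^{(x,t)}(s)\le l^{(x,t)}(s)$ for all $s\ge t$. Both $\NN$ and $\NN'$ are standard Brownian nets, so $l^{(x,t)}$ and $l'^{(x,t)}$ are Brownian motions with the same constant left drift started at $(x,t)$; in particular $\E[l^{(x,t)}(s)]=\E[l'^{(x,t)}(s)]$ for every $s\ge t$. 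Then $l^{(x,t)}(s)-l'^{(x,t)}(s)$ is a non-negative random variable of zero mean, hence vanishes a.s. for each $s$, and by continuity the two paths coincide a.s. The identical argument gives $r'^{(x,t)}=r^{(x,t)}$ for every rational $(x,t)$, so the left and right webs of $\NN'$ agree a.s. with those of $\NN$. By the hopping characterisation once more, $\NN'=\overline{{\cal H}_{\text{int}}(\WW^l\cup\WW^r)}=\NN$ a.s., which is the assertion of the corollary.
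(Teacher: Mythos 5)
Your opening (the two inclusions, compactness of $\NN'$, closedness of the Brownian net under hopping) matches the paper's proof, and your closing first-moment comparison of extremal paths is a correct alternative to the paper's ending (the paper instead argues that an a.s. inclusion between two equidistributed random compact sets forces a.s. equality). The problem is the middle step, which is the only hard one: you need $\NN\subseteq\NN'$, and you propose to get it by splicing rational-time DyBW pieces at large separation points to approximate $l^{(x,t)}$ in $d_\Pi$, but you explicitly defer the convergence of the spliced approximants (``Granting this\ldots''). That deferred step is the actual mathematical content here, and it is not routine. First, a path of ${\cal H}_{\text{int}}({\cal K})$ has \emph{finitely} many pieces by definition (\ref{def:Hopping}), so you must show only finitely many hops are ever needed; but the local-finiteness results you invoke (Proposition 7.9 of \cite{NRS10}, used in Lemma \ref{lem:SeparPtContains}) control only $\Xi^\epsilon=D^\epsilon\cap A^{\tilde\epsilon}$, i.e.\ large-diameter separation points \emph{with an age cutoff}. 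Large-diameter separation points of small age can accumulate at the starting point $(x,t)$ of $l^{(x,t)}$, and nothing you cite rules this out, so the claimed discretisation of the trajectory by finitely many large separation points is unjustified. Second, the excursion control must hold along the approximant's own hop-dependent trajectory (a wrong branch taken during an excursion off $l^{(x,t)}$ can occur at a large separation point not lying on $l^{(x,t)}$), so the bound ``each deviation has diameter at most $\epsilon$'' needs an inductive argument over a trajectory that changes with each hop. Completing these steps would amount to re-proving the marking construction of the net, i.e.\ Theorem 5.5/Proposition 6.1 of \cite{NRS10}, from scratch.

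The paper's proof avoids extremal paths and splicing entirely. It first uses Theorem \ref{thm:DYBW_RCLL} (RCLL property) to pass from rational dynamic times to all of $[0,1]$, so that $\{\pi:\pi\in\WW(\tau)\text{ for some }\tau\in[0,1]\}\subseteq\overline{{\cal K}}\subseteq\NN'$; this step is also missing from your sketch. It then observes that each partial net $\NN_n$ of the \cite{NRS10} construction (built from the locally finite approximating marking measures $L_n$) satisfies $\NN_n\subseteq{\cal H}_{\text{int}}(\{\WW(\tau):\tau\in[0,1]\})\subseteq{\cal H}_{\text{int}}(\NN')=\NN'$ --- finitely many hops suffice there precisely because the markings of $\NN_n$ are locally finite --- and finally cites the a.s. convergence $\NN_n\to\NN$ from \cite{NRS10}, which together with compactness of $\NN'$ yields $\NN\subseteq\NN'$. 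In other words, the ingredient your argument lacks is exactly what the citation of the partial-net approximation supplies for free. If you replace your splicing paragraph by that citation, the rest of your write-up, including your first-moment ending, goes through.
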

\begin{proof} 
We proved that the DyBW has ${\cal H}$-valued RCLL paths a.s. 
Hence, $\overline{{\cal K}}$ contains the set
$\{\pi : \pi \in \WW(\tau) \text{ for some } \tau \in [0,1] \}$. 
As $\NN^\prime$ gives a compact collection collection of paths, our assumption 
$$
{ \cal K } \subseteq \NN^\prime \text{ implies }
\overline{{\cal K}} \subseteq \NN^\prime.
$$ 
This implies that  
\begin{equation}
\label{eq:Net_DyBW}
\{ \pi : \pi \in \WW(\tau) \text{ for some } \tau \in [0,1] \} \subseteq \NN^\prime.
\end{equation}
Next for any $n \in \N$, 
paths in the `corresponding' partial net $\NN_n$ belong to 
${\cal H}_{\text{int}}(\{ \WW(\tau) : \tau \in [0,1] \})$. 
By definition for any ${\cal K}_1 \subseteq {\cal K}_2 \subseteq \Pi$ we have
${\cal H}_{\text{int}}({\cal K}_1) \subseteq {\cal H}_{\text{int}}({\cal K}_2)$.
Since, Brownian net is closed under hopping (see Proposition 1.4. of \cite{SS08}), 
for every $n \in \N$ Equation (\ref{eq:Net_DyBW}) allows us to obtain 
$$
\NN_n \subseteq {\cal H}_{\text{int}}(\{ \WW(\tau) : \tau \in [0,1] \}) \subseteq 
{\cal H}_{\text{int}}(\NN^\prime) = \NN^\prime.
$$ 
On the other hand, 
\cite{NRS10} showed (see Theorem of \cite{NRS10}) for the corresponding net $\NN$ that
$$
\NN = \lim_{n \to \infty} \NN_n \text{ a.s}.
$$ 
As $\NN^\prime$ is a compact collection of paths a.s., 
this implies that $\NN \subseteq \NN^\prime$ as well.
Since they both have the same distribution, $\NN^\prime$ can not have more paths
and we must have $\NN^\prime = \NN$ a.s.
This completes the proof. 
\end{proof}

Next we present an extension of the above corollary. Recall the 
$m_0 + 1$ dimensional random vector
of the ``corresponding" Brownian nets $(\NN, \NN_0, \NN_1, \cdots , \NN_{m_0-1})$ 
where $\NN_j$ is constructed using markings in the dynamic time 
interval $[j/m_0, (j+1)/m_0)$ only.  
\begin{corollary}
\label{cor:Correspond_NetVector}
Let $(\NN^\prime, \NN^\prime_0, \NN^\prime_1, \cdots , \NN^\prime_{m_0-1})$ 
denote a $(m_0 + 1)$ dimensional 
random vector of Brownian nets such that $\NN^\prime \stackrel{d}{=} \NN(1)$ and 
for any $0 \leq j \leq m_0 - 1$, we have $\NN^\prime_j \stackrel{d}{=} \NN(1/m_0)$. 
Suppose there exists a DyBW process $\{ \WW(\tau) : \tau \in [0,1] \}$ 
define on the same probability space such that we have 
$$
\{ \pi : \pi \in \WW(\tau) \text{ for some } \tau \in [j/m_0 , (j+1)/m_0)\cap \Q\} 
\subset \NN^\prime_j .
$$
Then we must have 
$$
\bigl( \NN, \NN_0, \cdots , \NN_{m_0 - 1}  \bigr) = \bigl( \NN^\prime,
\NN^\prime_0 , \cdots , \NN^\prime_{m_0-1} \bigr ) \text{ a.s. } 
$$  
where the l.h.s. denotes the random vector of `corresponding' 
Brownian nets for the  DyBW process $\{ \WW(\tau) : \tau \in [0,1] \}$.
\end{corollary}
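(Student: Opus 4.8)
The plan is to deduce the vector identity from $m_0+1$ separate applications of the single-net identification in Corollary \ref{cor:Correspond_Net}, one in each coordinate. The essential observation is that the proof of Corollary \ref{cor:Correspond_Net} uses no special feature of the interval $[0,1]$: it relies only on (i) the a.s.\ RCLL property of the DyBW (Theorem \ref{thm:DYBW_RCLL}), (ii) the compactness of the candidate net together with its closure under hopping at intersection times, and (iii) the a.s.\ approximation $\NN = \lim_n \NN_n$ furnished by \cite{NRS10}. Each of these holds verbatim over any dynamic subinterval, so by stationarity the whole argument applies over $[j/m_0,(j+1)/m_0)$, where the corresponding net is $\NN_j$.

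First I would treat the subinterval coordinates. Fix $0 \le j \le m_0-1$ and write $\mathcal{K}_j := \{\pi : \pi \in \WW(\tau) \text{ for some } \tau \in [j/m_0,(j+1)/m_0)\cap\Q\}$. The hypothesis gives $\mathcal{K}_j \subset \NN^\prime_j$, while $\NN^\prime_j \stackrel{d}{=} \NN(1/m_0)$ is a standard Brownian net living on the same probability space. Repeating the argument of Corollary \ref{cor:Correspond_Net} over $[j/m_0,(j+1)/m_0)$ then yields $\NN_j \subseteq \NN^\prime_j$, and since the two have the same law this inclusion is an a.s.\ equality, $\NN^\prime_j = \NN_j$.

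Next I would handle the full net $\NN^\prime$. Here it suffices to establish $\mathcal{K}(\WW) \subset \NN^\prime$, after which Corollary \ref{cor:Correspond_Net} gives $\NN^\prime = \NN$ directly. Since $[0,1) = \bigcup_{j=0}^{m_0-1}[j/m_0,(j+1)/m_0)$, the union $\bigcup_j \mathcal{K}_j$ exhausts the DyBW paths at all rational dynamic times in $[0,1)$; and because a.s.\ $\tau = 1$ is not a jump time of the RCLL map $\tau \mapsto \WW(\tau)$, the paths of $\WW(1)$ are $d_\Pi$-limits of such paths, so $\mathcal{K}(\WW) \subseteq \overline{\bigcup_j \mathcal{K}_j}$. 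Provided each $\NN^\prime_j \subseteq \NN^\prime$, compactness (hence closedness) of $\NN^\prime$ gives $\overline{\bigcup_j \mathcal{K}_j} \subseteq \NN^\prime$, and therefore $\mathcal{K}(\WW) \subset \NN^\prime$. Collecting the coordinates then proves the asserted equality of the whole vector.

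The step I expect to be the main obstacle is the inclusion $\NN^\prime_j \subseteq \NN^\prime$ invoked above: it cannot be read off from the marginal laws $\NN^\prime \stackrel{d}{=} \NN(1)$ and $\NN^\prime_j \stackrel{d}{=} \NN(1/m_0)$ alone, so it must be supplied as structural information about the vector. In the situation where this corollary is used, namely identifying the a.s.\ Skorohod limit of the discrete vectors $(\NN_n,\NN_{0,n},\dots,\NN_{m_0-1,n})$ in the proof of Proposition \ref{prop:JtConv_Nets}, the inclusion is automatic: at the discrete level $\mathcal{A}_{[j/m_0,(j+1)/m_0),n} \subseteq \mathcal{A}_{[0,1],n}$ forces $\NN_{j,n} \subseteq \NN_n$, and inclusion of compact subsets of $(\Pi,d_\Pi)$ is preserved under $d_{\mathcal{H}}$-limits, whence $\NN^\prime_j \subseteq \NN^\prime$ a.s. I would therefore record this inclusion (equivalently, the full-interval containment $\mathcal{K}(\WW) \subset \NN^\prime$) as a standing property of the vector before carrying out the coordinate-wise argument.
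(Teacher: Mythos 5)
Your proposal is correct, and its core is exactly what the paper does: the paper's entire proof of Corollary \ref{cor:Correspond_NetVector} is the single sentence that it ``follows from the same argument as the earlier one,'' i.e.\ a coordinate-wise repetition of the argument of Corollary \ref{cor:Correspond_Net} over each dynamic subinterval $[j/m_0,(j+1)/m_0)$, which is your first step. Where you genuinely diverge from (and improve on) the paper is the first coordinate, and the obstacle you flag is real: as literally stated, the hypotheses constrain only $\NN^\prime_0,\dots,\NN^\prime_{m_0-1}$ and impose nothing tying $\NN^\prime$ to the DyBW or to the $\NN^\prime_j$, so the conclusion $\NN^\prime=\NN$ a.s.\ cannot follow --- take $\NN^\prime$ to be a Brownian net independent of everything else; all stated hypotheses hold, yet $\P(\NN^\prime\neq\NN)>0$. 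The paper's one-line proof passes over this, and in the application (proof of Proposition \ref{prop:JtConv_Nets}) the paper likewise verifies the containment hypothesis only for the subinterval coordinates ${\cal Z}_j$, never for ${\cal Z}$. Your repair is the right one: record $\NN^\prime_j\subseteq\NN^\prime$ (equivalently, the full-interval containment ${\cal K}(\WW)\subset\NN^\prime$) as a standing hypothesis, and note that in the application it is inherited from the discrete inclusions $\NN_{j,n}\subseteq\NN_n$, since inclusion of compact sets survives joint $d_{{\cal H}}$-limits. Your derivation of ${\cal K}(\WW)\subset\NN^\prime$ from that inclusion is also sound, with one small point worth making explicit: the claim that $\tau=1$ is a.s.\ not a jump time of $\tau\mapsto\WW(\tau)$ is not a formal consequence of the RCLL property alone, but follows from the paper's own observations (a.s.\ finiteness of $\Lambda^\epsilon$ together with stationarity, or the a.s.\ identity $\NN_{[\tau_1,\tau_2]}=\NN_{[\tau_1,\tau_2)}$ for deterministic endpoints). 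In short: same decomposition as the paper, plus a correction of the statement's hypotheses that the paper's argument actually needs.
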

The proof follows from the same argument as the earlier one.
Now we are ready to prove Proposition \ref{prop:JtConv_Nets}. 

{\noindent}{\bf Proof of Proposition \ref{prop:JtConv_Nets} :} In  Section 
\ref{subsec:FDD} we proved finite dimensional convergence for the scaled DyDW 
to the DyBW. This implies following distributional convergence 
$$
\{{\cal X}_n(\tau) : \tau \in [0,1] \cap \Q\} \Rightarrow 
\{ \WW(\tau) : \tau \in [0,1] \cap \Q\} \text{ as }n \to \infty,
$$ 
as a sequence of ${\cal H}^\N$ valued random variables. 

We consider the sequence of ${\cal H}^{m_0 + 1} \times {\cal H}^\N$ valued random variables
$$
\bigl\{ \bigl ( \NN_n, \NN_{0,n}, \cdots , \NN_{m_0 -1, n} \bigr), 
\{{\cal X}_n(\tau) : \tau \in [0,1] \cap \Q\} : n \in \N  \bigr\}.
$$ 
It is not difficult to see that the above sequence is tight. 

Let $\Bigl ( \bigl({\cal Z}, {\cal Z}_0, \cdots , {\cal Z}_{m_0 -1} \bigr), 
\{ \WW(\tau) : \tau \in [0,1] \cap \Q\} \Bigr )$ be a subsequential limit of of the above sequence.
By Skorohod's embedding theorem we assume that we are working on a probability space 
where we have almost sure convergence. 
By the work of \cite{SS08} for all $0 \leq j \leq m_0 -1$ we have that 
$$
{\cal Z} \stackrel{d}{=} \NN(1) \text{ and }{\cal Z}_j \stackrel{d}{=} \NN(1/m_0).
$$
Further our construction ensures that for any $0 \leq j \leq m_0 - 1$ we have, 
$$
\pi^n \in {\cal X}_n(\tau) \text{ for some }\tau \in [j/m_0, (j+1)/m_0) \text{ only if }
\pi^n \in \NN_{j,n}.
$$
In other words, for all $0 \leq j \leq m_0 - 1$ we have 
$$
\{\pi^n : \pi^n \in {\cal X}_n(\tau) \text{ for some } \tau \in [j/m_0,(j+1)/m_0) \cap \Q \}
\subseteq \NN_{j,n}.
$$
Hence, the limiting random variable ${\cal Z}^j$ must contain 
the set of paths $\{\pi \in \WW(\tau) \text{ for some } \tau \in [j/m_0, (j+1)/m_0) \cap \Q\}$. 
By Corollary \ref{cor:Correspond_NetVector} we have that 
$$
({\cal Z}, {\cal Z}_0, \cdots , {\cal Z}_{m-1}) \stackrel{d}{=} (\NN, \NN_0, \cdots, \NN_{m_0-1}).
$$
This completes the proof.
\qed

\noindent{\bf Acknowledgement:} Part of the work on this paper was done when K.S. visited K.R at NYU Abu Dhabi. Both authors wish to thank NYU Abu Dhabi for hospitality and support.

\end{document}